\pdfoutput=1
\pdfminorversion=7
\documentclass[reqno]{amsart}
\usepackage[utf8]{inputenc}
\usepackage[T1]{fontenc}
\usepackage[margin={3cm,2cm}]{geometry}
\usepackage[foot]{amsaddr}
\numberwithin{equation}{section}
\usepackage{ntrees}

\usepackage{hyperref, cleveref}
\usepackage{bookmark}
\usepackage[ocgcolorlinks]{ocgx2} 
\usepackage{xcolor}
\usepackage{tikz-cd}
\usepackage{mathtools, lmodern, microtype, mathabx, stmaryrd, shuffle, amssymb}
\usepackage{mleftright, xspace}
\usepackage{fixcmex}
\usepackage{mathrsfs}
\usepackage[sort,compress]{cite}
\mleftright

\usepackage{todonotes}
\setlength{\marginparwidth}{2.5cm}
\hypersetup{citecolor=blue,%
pdfinfo={Author={P. Friz, P. Hager, and N. Tapia}, Title={Unified signature cumulants and generalized Magnus expansions},%
Keywords={Signatures, L\'evy processes, moment-cumulant relations, characteristic functions, diamond product},%
Subject={MSC(2020) 60L10, 60L90, 60E10, 60G44, 60G48, 60G51, 60J76}}
}

\DeclareMathOperator{\ad}{ad}
\newcommand{\tg}{{\tau_\Gamma}}
\newcommand{\tltg}{\mathbf{1}_{\{t<\tau_\Gamma\}}}
\newcommand{\etfrak}{\mathfrak{y}}
\newcommand{\xlone}{\mathbf{1}_{|x|\le1}}
\newcommand{\xgone}{\mathbf{1}_{|x|>1}}
\def\bA{\mathbf{A}}
\def\bB{\mathbf{B}}
\def\bC{\mathbf{C}}
\def\bF{\mathbf{F}}
\def\bJ{\mathbf{J}}
\def\bL{\mathbf{L}}
\def\bM{\mathbf{M}}
\def\bN{\mathbf{N}}
\def\bS{\mathbf{S}}
\def\bU{\mathbf{U}}
\def\bV{\mathbf{V}}
\def\bW{\mathbf{W}}
\def\bX{\mathbf{X}}
\def\bY{\mathbf{Y}}
\def\bZ{\mathbf{Z}}
\def\bm{\mathbf{m}}
\def\ba{\mathbf{a}}
\def\bb{\mathbf{b}}

\def\bu{\mathbf{u}}
\def\bv{\mathbf{v}}
\def\Sesup{\mathscr{S}}
\def\indik{\mathbf 1}
\def\dparl{\mathopen{(\mkern-3mu(}}
\def\dparr{\mathclose{)\mkern-3mu)}}

\newtheorem{theorem}{Theorem}[section]
\newtheorem{proposition}[theorem]{Proposition}
\newtheorem{lemma}[theorem]{Lemma}
\newtheorem{corollary}[theorem]{Corollary}
\newtheorem{remark}[theorem]{Remark}
\theoremstyle{definition}
\newtheorem{definition}[theorem]{Definition}

\newtheorem{xmp}[theorem]{Example}
\newtheorem{claim}[theorem]{Claim}
\newenvironment{example}{\renewcommand{\qedsymbol}{\(\lozenge\)}\pushQED{\qed}\begin{xmp}}{\popQED\end{xmp}}

\newcommand{\R}{\mathbb{R}}
\newcommand{\Rd}{{\mathbb{R}^d}}
\newcommand{\F}{\mathcal{F}}
\newcommand{\PM}{\mathbb{P}}
\newcommand{\E}{\mathbb{E}}
\newcommand{\bx}{\mathbf{x}}
\newcommand{\by}{\mathbf{y}}
\newcommand{\N}{{\mathbb{N}}}
\newcommand{\None}{{\mathbb{N}_{\ge1}}}

\newcommand{\Lcal}{\mathcal{L}}

\newcommand{\Id}{\mathrm{Id}}
\newcommand{\I}{\mathbf{I}}
\newcommand{\W}{\mathcal{W}}

\newcommand{\mul}{\mathrm{m}}
\newcommand{\dd}{\mathrm{d}}
\newcommand{\TT}{\mathcal{T}}
\newcommand{\Lie}[2]{\left[ #1, #2 \right]}
\newcommand{\tf}{\TT_0}
\newcommand{\ideal}{\mathcal{I}}

\newcommand{\outerbracket}[2]{\left\llbracket #1, #2 \right\rrbracket}

\newcommand{\innerbracket}[2]{\left\langle #1, #2 \right\rangle}
\newcommand{\innerbracketsmall}[2]{\langle #1, #2 \rangle}

\newcommand{\QV}[1]{\left\langle #1 \right\rangle}
\newcommand{\GQV}[1]{\left[ #1 \right]}
\newcommand{\GQVsmall}[1]{[ #1]}
\newcommand{\QVsmall}[1]{\langle #1 \rangle}
\newcommand{\CV}[2]{\left\langle #1, #2 \right\rangle}
\newcommand{\CVsmall}[2]{\langle #1, #2 \rangle}

\DeclarePairedDelimiter{\abss}{\vert}{\vert}
\def\abs{\abss*}
\DeclarePairedDelimiter{\Abss}{\Vert}{\Vert}
\def\Abs{\Abss*}

\DeclarePairedDelimiter{\Aabss}{\vert\mkern-2.5mu\vert\mkern-2.5mu\vert}{\vert\mkern-2.5mu\vert\mkern-2.5mu\vert}
\def\Aabs{\Aabss*}
\newcommand{\wt}{\widetilde}
\newcommand{\ol}{\overline}

\newcommand{\kap}{\pmb{\kappa}}
\newcommand{\kapT}[1]{\pmb{\kappa}_{#1}(T)}

\newcommand{\tkap}{\tilde\kap}
\newcommand{\esig}{\pmb{\mu}}

\newcommand{\KK}{\mathbb{K}}

\newcommand{\fmu}{\pmb{\mu}}

\newcommand{\wpr}{{w^{\prime}}}

\newcommand{\Mag}{\mathrm{Mag}}
\newcommand{\Qua}{\mathrm{Qua}}
\newcommand{\Cov}{\mathrm{Cov}}
\newcommand{\Jmp}{\mathrm{Jmp}}
\newcommand{\Sig}{\mathrm{Sig}}
\newcommand{\T}{\mathcal{T}}
\newcommand{\Sy}{\mathcal{S}} 

\newcommand{\hatexp}{\widehat \exp}
\newcommand{\hatlog}{\widehat \log}

\newcommand{\expN}[1]{\exp_N({#1})}

\newcommand{\Se}{\mathscr{S}} 
\newcommand{\D}{\mathscr{D}} %
\newcommand{\Sec}{\Se^c} 
\newcommand{\Ma}{\mathscr{M}} 
\newcommand{\Mac}{\mathscr{M}^c} 
\def\cadlag{càdlàg\xspace}
\newcommand{\loc}{\mathrm{loc}}
\newcommand{\Fv}{\mathscr{V}} 
\newcommand{\var}{\mathrm{var}}
\newcommand{\HSe}{\mathscr{H}}
\newcommand{\HSehom}{\mathscr{H}}
\newcommand{\HSehomSym}{\widehat{\mathscr{H}}}
\newcommand{\homnqN}[1]{\Aabs{#1}_{\HSehom^{q,N}}}
\newcommand{\homn}[1]{\Aabs{#1}_{\HSehom^{1,N}}}
\newcommand{\homnqNs}[1]{\Aabss{#1}_{\HSehom^{q,N}}}
\newcommand{\homns}[1]{\Aabss{#1}_{\HSehom^{1,N}}}

\begin{document}
\title{Unified signature cumulants and generalized Magnus expansions}
\author[P.~Friz]{Peter~K.~Friz$^{\dagger,\ddagger}$}
\author[P.~Hager]{Paul~Hager$^\dagger$}
\author[N.~Tapia]{Nikolas Tapia$^{\dagger,\ddagger}$}
\email{\{friz,phager,tapia\}@math.tu-berlin.de}
\address{$^\dagger$Institut für Mathematik, TU Berlin, Str. des 17. Juni 136, 10586 Berlin, Germany.}
\address{$^\ddagger$Weierstrass Institute, Mohrenstr. 39, 10117 Berlin, Germany.}
\subjclass[2020]{60L10, 60L90, 60E10, 60G44, 60G48, 60G51, 60J76}
\keywords{Signatures, L\'evy processes, Markov processes, stochastic Volterra processes, universal signature relations for semimartingales, moment-cumulant relations, characteristic functions, diamond product, Magnus expansion}
\begin{abstract}
The signature of a path can be described as its full non-commutative exponential. Following T. Lyons we regard its expectation, the {\em expected signature}, as path space analogue of the classical moment generating function. The logarithm thereof, taken in the tensor algebra, defines the {\em signature cumulant}.  We establish a universal functional relation in a general semimartingale context. Our work exhibits the importance of Magnus expansions in the algorithmic problem of computing expected signature cumulants, and further offers a far-reaching generalization of recent results on characteristic exponents dubbed diamond and cumulant expansions;
with motivation ranging from financial mathematics to statistical physics. From an affine process perspective, the functional relation may be interpreted as infinite-dimensional, non-commutative (``Hausdorff'') variation of Riccati's equation. Many examples are given.
\end{abstract}
\maketitle

\tableofcontents

\section{Introduction and main results}

Write $\T \coloneq 
T\dparl\mathbb{R}^d\dparr = \Pi_{k \ge 0} (\R^d)^{\otimes k}$ for the tensor series over $\R^d$, equipped with concatenation product, elements of which are written indifferently as
$$\bx = (\bx^{(0)},\bx^{(1)},\bx^{(2)},\dotsc) \equiv
\bx^{(0)}+ \bx^{(1)}+ \bx^{(2)}+\dotsb.
 $$
The affine subspace $\T_0$ (resp. $\T_1$) with scalar component $\bx^{(0)} = 0$ (resp. $ = 1$) has a natural Lie algebra (resp. formal Lie group) structure.

Let further $\Se = \Se (\R^d)$, resp. $\Sec= \Sec(\R^d)$, denote the class of \cadlag, resp. continuous, $d$-dimensional semimartingales on some filtered probability space $(\Omega,
(\mathcal{F}_t)_{t \ge 0}, \mathbb{P})$. The formal sum of iterated Stratonovich-integrals, the
\emph{signature} of $X \in \Sec$
\[
\Sig (X)_{s, t} = 1 + X_{s, t} + \int_s^t X_{s, u}\,{\circ\mathrm d}X_u + \int_s^t
   \left( \int_s^{u_1} X_{s, u_2}\,{\circ \mathrm d}X_{u_2} \right)\,{\circ\mathrm d}
   X_{u_1} + \cdots
\]
for $0 \le s \le t$ defines a random element in $\T_1$ and, as a process, a formal $\T_1$-valued
semimartingale. By regarding the $d$-dimensional semimartingale $X$ as $\tf$-valued semimartingale
($X \leftrightarrow \bX = (0,X,0,\dots$)), we see that the signature of $X$ satisfies the Stratonovich stochastic differential equation
\begin{equation}
     \mathrm d S = S\,{\circ\mathrm d} \bX \label{equ:gSig}.
\end{equation}
The solution is a.k.a. the Lie group valued stochastic exponential (or development) of 
$\bX \in \Se(\tf)$, with classical references \cite{mckean1969stochastic,hakim1986exponentielle};
the càdlàg case \cite{estrade1992exponentielle} is consistent with the geometric or Marcus
\cite{marcus1978modeling,marcus1981modeling, kurtz1995strtonovich, applebaum2009levy
, friz2017general} interpretation of
\eqref{equ:gSig}\footnote{Diamond notation for Marcus SDEs, $\mathrm d S = S\,{\diamond \mathrm d} \bX$, cf. \cite{applebaum2009levy}, will not be used here to avoid notational clash with \cite{alos2018exponentiation, friz2020cumulants}.}
with jump behavior $S_t = e^{\Delta \bX_t} S_{t-}$. From a stochastic differential
geometry point of view, one aims for an intrinsic understanding of \eqref{equ:gSig} valid for arbitrary Lie groups. For instance, if $\bX$ takes values in any sub Lie algebra $\mathcal{L} \subset \T_0$, then $S$ takes values in the group $\mathcal{G} = \exp \mathcal{L}$.
In case of a $d$-dimensional semimartingale $X$, the minimal choice is
$\mathrm{Lie}\dparl\R^d\dparr$, see
e.g. \cite{reutenauer2003free}, the resulting log-Lie structure of iterated integrals (both in the
smooth and Stratonovich semimartingale case) is well-known. The extrinsic linear ambient space $\T
\supset \exp{\mathcal{L}}$ will be important to us. Indeed, writing $S_t=\Sig(\bX)_{0,t}$ for the
(unique, global) $\T_1$-valued solution of \eqref{equ:gSig} driven by $\T_0$-valued $\bX$, started at $S_0 = 1$, we define, whenever $\Sig (\bX)_{0, T}$ is (componentwise) integrable, the {\em expected signature} and \emph{signature cumulants} (SigCum)
\[
       \fmu (T) \coloneq  \E (\Sig (\bX)_{0, T})\in\TT_1, \quad \kap (T) \coloneq  \log \fmu (T) \in \tf.
\]
Already when $\bX$ is deterministic, and sufficiently regular to make \eqref{equ:gSig} meaningful, this leads to an interesting (ordinary differential) equation for $\kap$ with accompanying (Magnus) expansion, well understood as effective computational tool \cite{iserles2005lie,blanes2009magnus}. The importance of the stochastic case $\bX = \bX(\omega)$, with expectation and logarithm thereof, was developed by Lyons and coworkers; see \cite{LyonsICM} and references therein, with a  variety of applications, ranging from machine learning to numerical algorithms on Wiener space known as cubature, see e.g. \cite{lyons2004cubature}.  In case of $d=1$ and $\bX=(0,X,0,\dots)$ with a single scalar semimartingale $X$, this is nothing but the sequence of moments and cumulants of the real valued random variable $X_T-X_0$. When $d > 1$, expected signature / cumulants provides an effective way
to describe the process $X$ on $[0,T]$, see \cite{LJZ11, LyonsICM, chevyrev2016characteristic}. The question arises how to compute. If one takes $\bX$ as $d$-dimensional Brownian motion, the signature cumulant $\kap(T)$ equals $(T/2) \I_d$, where $\I_d$ is the identity $2$-tensor over $\R^d$. This is known as {\em Fawcett's formula}, \cite{lyons2004cubature,friz2020course}. 
 Loosely speaking, and postponing precise definitions, our main result is a vast generalization of Fawcett's formula.
 \begin{theorem}[FunctEqu $\mathcal{S}$-SigCum]\label{thm:main_result} For sufficiently integrable $\bX\in \Se(\tf)$, the (time-$t$) conditional signature cumulants 
 $ \kap_t (T) \equiv \kap_t \coloneq  \log  \E_t (\Sig (\bX)_{t, T})$, is the unique solution of the functional equation
\begin{equation}
\begin{split}
        \kap_t (T) & = \E_t\bigg\{ \int_{(t,T]}H(\ad{\kap_{u-}})(\mathrm d\bX_{u})
+ \frac{1}{2}\int_t^{T}H(\ad{\kap_{u-}})(\mathrm d\QV{\bX^{c}}_{u}) \label{eq:mainthm} \\
& + \frac{1}{2} \int_t^{T}H(\ad{\kap_{u-}}) \circ Q(\ad{\kap_{u-}})(\mathrm d\outerbracket{\kap}{\kap}^{c}_u)
+\int_t^{T}H(\ad{\kap_{u-}})\circ(\Id\odot G(\ad{\kap_{u-}}))(\mathrm d\outerbracket{\bX}{\kap}^{c}_u)
\\
&\qquad + \sum_{t < u \le T}\bigg(H(\ad{\kap_{u-}})\Big(\exp(\Delta \bX_u)\exp(\kap_u)\exp(-\kap_{u-}) - 1 -\Delta \bX_u\Big) - \Delta \kap_u \bigg)\bigg\},
\end{split}
\end{equation}
where all integrals are understood in It\^o- and Riemann--Stieltjes sense
respectively.\footnote{Here $\circ$ denotes composition, not to be confused with Stratonovich
integration ${\circ\dd} \bX$. } The functions $H,G,Q$ are defined in \eqref{eq:GHQ_def} below, cf. also Section \ref{sec:preliminaries} for further notation.
\end{theorem}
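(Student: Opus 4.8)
The plan is to promote the conditional expected signature to a genuine martingale via Chen's identity, differentiate the noncommutative exponential $\exp(\kap_t)$ by a tensor-valued Itô formula, and then impose the zero-drift (martingale) condition to read off the dynamics of $\kap$. \emph{Step 1 (a martingale).} By multiplicativity of the signature, $\Sig(\bX)_{0,T} = S_t\,\Sig(\bX)_{t,T}$ with $S_t := \Sig(\bX)_{0,t}$ being $\F_t$-measurable; hence $M_t := \E_t(\Sig(\bX)_{0,T}) = S_t\,\fmu_t = S_t\exp(\kap_t)$ is, level by level, a $\T_1$-valued martingale, with terminal data $\fmu_T = 1$, i.e. $\kap_T = 0$. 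Since $S$ solves \eqref{equ:gSig} and is thus a $\T_1$-valued semimartingale, $\fmu_t = S_t^{-1}M_t$ and $\kap_t = \log\fmu_t$ are $\tf$-valued semimartingales, so the objects below are well defined.

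\emph{Step 2 (Itô for the two factors).} In Itô form \eqref{equ:gSig} reads $\mathrm dS = S_{t-}\,\mathrm d\bX + \tfrac12 S_{t-}\,\mathrm d\QV{\bX^c} + (\text{jumps})$, with Marcus jumps satisfying $S_{u-}^{-1}S_u=\exp(\Delta\bX_u)$. For the exponential factor I expand $\exp(\kap_t)$ by the Itô formula in the tensor algebra: at first order the derivative of the exponential appears, at second order its analogue acting on $\mathrm d\outerbracket{\kap}{\kap}^c$ (this is the function $Q$ of \eqref{eq:GHQ_def}), and the jumps contribute $\exp(\kap_u)-\exp(\kap_{u-})$. \emph{Step 3 (product rule, zero drift).} Applying the tensor Itô product rule to $M_t = S_t\exp(\kap_t)$ and collecting the finite-variation parts from $\mathrm dS$, from $\mathrm d\exp(\kap)$, from the continuous covariation $\mathrm d\outerbracket{S}{\exp(\kap)}^c$ (which generates the cross term $\mathrm d\outerbracket{\bX}{\kap}^c$ carrying the modulation $\Id\odot G(\ad_\kap)$), and the compensated jumps, the martingale property forces this total drift to vanish. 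Left-multiplying the resulting identity by $S_{t-}^{-1}\exp(-\kap_{t-})$ isolates the derivative of the exponential acting on the drift of $\kap$; inverting it (the classical dexpinv, with generating function $H(z)=z/(e^z-1)$) puts the prefactor $H(\ad_{\kap_{t-}})$ in front of every term. The jump part lands precisely as $H(\ad_{\kap_{u-}})\bigl(\exp(\Delta\bX_u)\exp(\kap_u)\exp(-\kap_{u-})-1-\Delta\bX_u\bigr)-\Delta\kap_u$, because $S_{u-}^{-1}\,\Delta M_u\,\exp(-\kap_{u-}) = \exp(\Delta\bX_u)\exp(\kap_u)\exp(-\kap_{u-})-1$. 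Integrating the resulting expression for the drift of $\kap$ from $t$ to $T$, using $\kap_T=0$, and taking $\E_t$---which annihilates the surviving stochastic integrals since the integrands $H(\ad_{\kap_{u-}})(\cdot)$ are predictable---yields exactly \eqref{eq:mainthm}, the Itô integral for the $\mathrm d\bX$ term and Riemann--Stieltjes integrals for the covariation terms.

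\emph{Step 4 (uniqueness).} Since $\ad_{\kap}$ and the outer brackets $\outerbracket{\cdot}{\cdot}$ strictly raise the tensor level, the projection of \eqref{eq:mainthm} onto level $n$ determines $\kap^{(n)}$ from the data and from $\kap^{(1)},\dots,\kap^{(n-1)}$ only; solving by induction on $n$ gives both existence and uniqueness of the solution.

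The crux of the argument is Steps 2--3: establishing the exact tensor Itô formula for the noncommutative exponential---in particular identifying the second-order structure functions $Q$ and $G$ of \eqref{eq:GHQ_def} attached to $\mathrm d\outerbracket{\kap}{\kap}^c$ and $\mathrm d\outerbracket{\bX}{\kap}^c$---and checking that the Marcus jumps assemble into the stated closed form. Matching every continuous second-order correction to the single compositions $H\circ Q$ and $H\circ(\Id\odot G)$, with the outer $H(\ad_{\kap_{u-}})$ common to all terms, is where the bookkeeping is heaviest and where the precise conventions for the derivative of $\exp$ must be handled with care.
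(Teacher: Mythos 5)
Your outline follows the paper's own route almost exactly: Chen's relation to make $S_t\exp(\kap_t)$ a martingale, the tensor It\^o product rule combined with an It\^o formula for the noncommutative exponential (whose first- and second-order structure functions are $G$ and $Q$, the latter appearing first as a non-symmetrized $\widetilde Q$ that agrees with $Q$ on the symmetric bracket $\outerbracketsmall{\kap^c}{\kap^c}$), the zero-drift condition, and finally the dexpinv inversion $H\circ G=\Id$ to pass to the $H$-form; uniqueness by the graded recursion is also the paper's argument. So the architecture is right.

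The genuine gap is the integrability step between ``the drift vanishes'' and equation \eqref{eq:mainthm}. The product-rule computation only shows that $\bL+\kap$ (with $\bL$ the collection of finite-variation and compensator terms) is a \emph{local} martingale: your assertion that taking $\E_t$ ``annihilates the surviving stochastic integrals since the integrands are predictable'' is false as stated, since a predictable integrand against a local martingale again yields only a local martingale, and the conditional expectations of the individual terms in \eqref{eq:mainthm} are not even obviously well defined. Closing this is where the bulk of the paper's proof actually lives: one localizes with stopping times $\tau_k$, and then needs the quantitative estimate $\homn{\bL}\lesssim\homn{\bX}$ (and the analogous control of $\kap^{(n)}$ in $\HSe^{N/n}$), proved by an induction over tensor levels using the homogeneous norms $\homnqNs{\cdot}$, Emery's inequality, the Kunita--Watanabe inequality, and a two-variable Taylor bound on the jump terms, in order to dominate and pass to the limit $k\to\infty$. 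Relatedly, you take for granted that $\kap$ exists and is a semimartingale; the paper's ``sufficiently integrable'' is made precise as $\bX\in\HSe^{\infty-}(\tf)$ (or $\HSehom^{1,N}$ for the truncated statement) and rests on the BDG-type two-sided estimate for $\Sig(\bX)$ (\Cref{thm:bdg_signature}). Without some version of these estimates the argument does not yield the stated functional equation.
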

As displayed in Figures 1 and 2, this theorem has an avalanche of consequences on which we now comment.
\begin{itemize}
    \item \Cref{eq:mainthm} allows to compute $\kap^{(n)} = \pi^{(n)}(\kap) \in (\R^d)^{\otimes n}$
        as function of $\kap^{(1)},\dotsc,\kap^{(n-1)}$.
    (This remark
    applies \textit{mutatis mutandis} to all special cases seen as vertices in Figure 1.) The resulting expansions, displayed in \Cref{fig:cube.recur}, are of computational interest.
    \item The most classical consequence of \eqref{eq:mainthm} appears when $\bX$ is a deterministic continuous semimartingale, i.e. $\bX \in FV^c (\tf)$, which also covers the absolutely continuous case with, $\dot \bX \in L^1_{\mathrm{loc}} (\tf)$.  In this case all bracket terms and the final jump-sum disappear. What remains is a classical differential equation due to \cite{hausdorff1906symbolische}, here in backward form
    \begin{equation} \label{eqHDODE}
        - \dd \kap_t (T) = H(\ad{\kap_{t}})\dd\bX_t,\qquad - \dot \kap_t (T) = H(\ad{\kap_{t}})\dot{\bX}_t,
     \end{equation} 
    the accompanying expansions is then precisely Magnus expansion \cite{magnus1954exponential, iserles1999solution, iserles2005lie, blanes2009magnus}.
    By taking $\bX$ continuous and piecewise linear 
  on two adjacent intervals, say $[0,1)\cup[1,2)$, one obtains the
    Baker--Campbell--Hausdorff formula (see e.g. 
    \cite[Theorem 5.5]{Miller})
    \begin{equation} \label{equ:BCH}
        \begin{split}
             \kap_0(2)=\log\bigl(\exp(\bx_1)\exp(\bx_2) \bigr) &\eqcolon \operatorname{BCH}(\bx_1,\bx_2)\\
              &=\bx_2+\int_0^1\Psi(\exp(\ad t\bx_1)\circ\exp(\ad\bx_2))(\bx_1)\,\mathrm dt,
        \end{split}
    \end{equation}
    with
    \[
        \Psi(z)\coloneq\frac{\ln(z)}{z-1}=\sum_{n\ge 0}\frac{(-1)^n}{n+1}(z-1)^n
    \]
        It is also instructive to let $\bX$ piecewise constant on these intervals, with $\Delta \bX_1 =\bx_1, \Delta \bX_2 = \bx_2$, in which case
    \eqref{eq:mainthm} reduces to the first equality in \eqref{equ:BCH}.
    Such jump variations of the Magnus expansion are discussed in \Cref{sec:hausmag}.
    \item Writing $\bx \mapsto \hat \bx$ for the projection from $\T$ to the symmetric algebra $\Sy$ as the
linear space identified with symmetric tensor series, equation \eqref{eq:mainthm}, in its projected and commutative form becomes
    \begin{equation} \label{eq:maincom}
    \begin{split}
    \qquad \text{FunctEqu $\Se$-Cum:} \quad
        \hat \kap_t (T) & = \E_t\bigg\{ \hat \bX_{t,T}  + \frac{1}{2} \QV{(\hat \bX+ \hat \kap)^{c}}_{t,T}\\
&\qquad + \sum_{t < u \le T}\bigg(\exp \Big( \Delta \hat \bX_u + \Delta \hat \kap_u \Big)
- 1 - (\Delta \hat \bX_u + \Delta \hat \kap_u ) \bigg) \bigg\}
\end{split}
\end{equation}
where $\hat \bX$ is a $\Sy_0$-valued semimartingale, and $\exp\colon \Sy_0 \mapsto \Sy_1$ defined by
the usual power series. This includes of course semimartingales with values in $\R^d$, canonically
embedded in $\Sy_0$.
More interestingly, the case
$\hat \bX = (0,aX,b\langle X \rangle, 0, \dotsc)$, for a $d$-dimensional continuous martingale $X$ can be seen to underlie the expansions of \cite{friz2020cumulants}, which improves and unifies previous results \cite{lacoin2019probabilistic, alos2018exponentiation}, treating $(a,b)=(1,0)$ and $(a,b)=(1,-1/2)$, with motivation from QFT and mathematical finance, respectively. Following Gatheral and coworkers, \eqref{eq:maincom} and subsequent expansions involve ``diamond'' products of semimartingales, given, whenever well-defined, by
$$
          (A \diamond B)_t(T) \coloneq \E_t  \big( \QV{A^c, B^c}_{t,T} \big).
$$
All this is discussed in Section \ref{sub:diamond}. With regard to the existing (commutative) literature, our algebraic setup is ideally suited to work under finite moment assumptions, we are able to deal with jumps, not treated in \cite{lacoin2019probabilistic, alos2018exponentiation}.  Equation \eqref{eq:maincom} has a remarkable interpretation in that it can be viewed as (with jumps: generalized) infinite-dimensional Riccati differential equation and indeed reduces to the finite-dimensional equation when specialized to (sufficiently integrable) ``affine'' continuous (resp. general) semimartingales \cite{duffie2003affine,cuchiero2011affine, keller2011affine}.
    Of recent interest, explicit diamond expansions have been obtained for
   ``rough affine'' processes, non-Markov by nature, with cumulant generating function characterized by Riccati Volterra equations, see \cite{abijaber2019affine, gatheral2019affine,friz2020cumulants}. It is remarkable that analytic tractability remains intact when one passes to path space and considers signature cumulants, Section \ref{sec:AVp}.
\end{itemize}

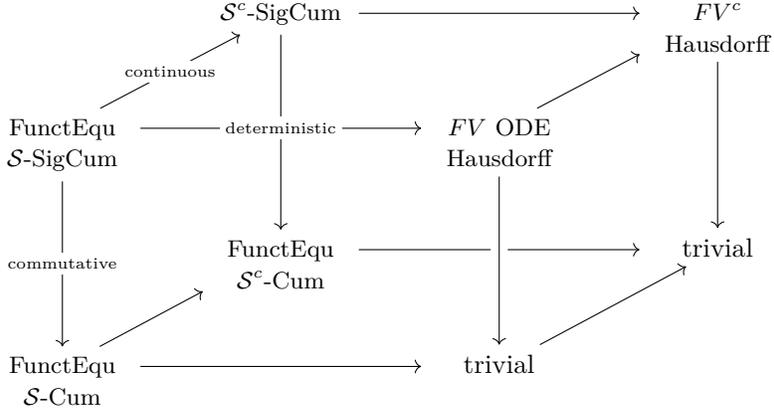
\begin{figure}[!ht]
    \centering
\begin{tikzcd}[math mode=false, cells={align=center,text width=5em}]
    &{\small $\mathcal{S}^c$-SigCum}\ar{rr}\ar{dd}&&{\small $FV^c$ Hausdorff}\ar{dd}\\
    {\small FunctEqu $\mathcal{S}$-SigCum} \ar["{\tiny continuous}"description]{ur}\ar["{\tiny commutative}"description]{dd}\ar[crossing over,"{\tiny deterministic}"description]{rr}&&{\small  $FV$ ODE Hausdorff}\ar[crossing over]{dd}\ar{ur}&\\
    &{\small FunctEqu $\mathcal{S}^c$-Cum}\ar{rr}&&trivial\\
    {\small FunctEqu $\mathcal{S}$-Cum} \ar{rr}\ar{ur}&&trivial\ar{ur}\ar[from=uu,crossing over]&
\end{tikzcd}
    \caption{FunctEqu $\mathcal{S}$-SigCum (\Cref{thm:main_with_jumps}) and implications} 
    \label{fig:theorems}
\end{figure}
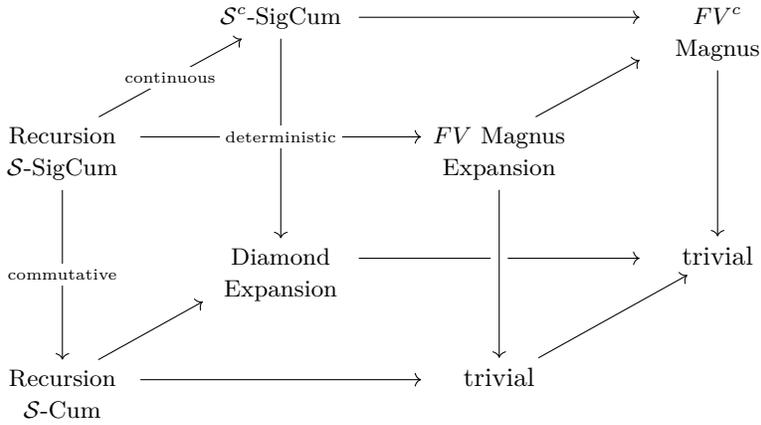
\begin{figure}[!ht]
    \centering
\begin{tikzcd}[math mode=false, cells={align=center,text width=5em}]
    &{\small $\mathcal{S}^c$-SigCum}\ar{rr}\ar{dd}&&{\small $FV^c$ Magnus}\ar{dd}\\
    {\small Recursion $\mathcal{S}$-SigCum} \ar["{\tiny continuous}"description]{ur}\ar["{\tiny commutative}"description]{dd}\ar[crossing over,"{\tiny deterministic}"description]{rr}&&{\small  $FV$ Magnus Expansion}\ar[crossing over]{dd}\ar{ur}&\\
    &{\small Diamond Expansion}\ar{rr}&&trivial\\
    {\small Recursion $\mathcal{S}$-Cum} \ar{rr}\ar{ur}&&trivial\ar{ur}\ar[from=uu,crossing over]&
\end{tikzcd}
    \caption{Computational consequence:  accompanying recursions}
    \label{fig:cube.recur}
\end{figure}

{\bf Acknowledgment.} PKF has received funding from the European Research Council (ERC) under the European Union's Horizon 2020 research and innovation program (grant agreement No. 683164) and the DFG Research Unit FOR 2402. PH and NT are supported by the DFG MATH$^+$ Excellence Cluster.

\section{Preliminaries}\label{sec:preliminaries}
\subsection{The tensor algebra and tensor series}\label{sec:tensor_series}
Denote by $T(\Rd)$ the tensor algebra over $\Rd$, i.e.
\begin{align*}
T(\Rd)\coloneq \bigoplus_{k=0}^\infty (\Rd)^{\otimes k},
\end{align*}
elements of which are {\it finite} sums (a.k.a. tensor polynomials) of the form
\begin{equation} \label{equ:bxform}
\bx = \sum_{k \ge 0} \bx^{(k)} 
= \sum_{w \in \W^d} \bx^w e_w
\end{equation}
with $\bx^{(k)} \in (\Rd)^{\otimes k}, \bx^w \in \R$ and linear basis vectors $e_w \coloneq e_{i_1}\dotsm e_{i_k}\in(\Rd)^{\otimes k}$ where $w$ ranges over all words $w=i_1\dotsm i_k\in\W_d$ over the alphabet $\{1,\dots,d\}$. Note $\bx^{(k)} =  \sum_{|w|=k} \bx^w e_w$ where $|w|$ denotes the length a word $w$. The element $e_\emptyset = 1 \in (\Rd)^{\otimes 0} \cong \R$ is neutral element of the concatenation (a.k.a. tensor) product, is obtained by linear extension of \(e_we_{w'}=e_{ww'}\) where $ww' \in \W_d$ denotes concatenation of two words.
We thus have, for $\bx,\by \in T(\Rd)$,
$$
\bx\by = \sum_{k \ge 0} \sum_{\ell =0}^k \bx^{(\ell)} \by^{(k-\ell)}  = \sum_{w \in \W^d} \left( \sum_{w_1w_2 = w} \bx^{w_1}\by^{w_2} \right) e_w \in T(\Rd).
$$
This extends naturally to {\em infinite} sums, a.k.a  tensor series, elements of the ``completed'' tensor algebra
\begin{align*}
  \TT \coloneq T\dparl\R^d\dparr\coloneq \prod_{k=0}^\infty (\Rd)^{\otimes k},
\end{align*}
which are written as in (\ref{equ:bxform}), but now as formal infinite sums with identical notation and multiplication rules; the resulting algebra $\TT$ obviously extends $T(\R^d)$.
For any $n\in\None$ define the projection to tensor levels by $$\pi_n: \TT \to (\Rd)^{\otimes n}, \quad \bx \mapsto \bx^{(n)}.$$
Denote by $\TT_0$ and $\TT_1$ the subspaces of tensor series starting with $0$ and $1$ respectively;
that is, \(\bx \in\tf\) (resp. \(\TT_1\)) if and only if \(\bx^\emptyset=0\) (resp. \(\bx^\emptyset=1\)).
Restricted to $\tf$ and $\TT_1$ respectively, the exponential and logarithm in $\TT$, defined by the usual series,
\begin{align*}
\exp\colon\tf \to \TT_1,& \quad \bx \mapsto \exp(\bx) \coloneq   1 + \sum_{k=1}^\infty \frac{1}{k!}(\bx)^k, \\
\log\colon\TT_1 \to \tf,& \quad 1 + \bx \mapsto \log( 1 + \bx) \coloneq \sum_{k=1}^\infty \frac{(-1)^{k+1}}{k}(\bx)^k,
\end{align*}
are globally defined and inverse to each other. The vector space $\tf$ becomes a {\it Lie algebra} with
$$\Lie{\bx}{\by} \coloneq \bx\by-\by\bx, \qquad \ad{\by}\colon \tf \to \tf, \ \bx \mapsto \Lie{\by}{\bx}.$$
Its exponential image $\TT_1=\exp(\tf)$ is a Lie group, at least formally so. We refrain from equipping the infinite-dimensional $\TT_1$ with a differentiable structure, not necessary in view of the ``locally finite'' nature of the group law $(\bx,\by) \mapsto \bx \by$.

Let $(a_k)_{k\ge1}$ be a sequence of real numbers then we can always define a linear operator on $\tf$ by
\begin{align*}
\left[ \sum_{k \ge 0}a_k(\ad\bx)^{k} \right]: \tf \to \tf, \quad \by\mapsto \sum_{k \ge 0}a_k(\ad\bx)^{k}(\by),
\end{align*}
where $(\ad\bx)^{0} = \Id$ is the identity operator and $(\ad \bx)^{n} = \ad\bx \circ (\ad \bx)^{n-1}$ for any $n\in\None$.
Indeed, there is no convergence issue due to the graded structure as can be seen by projecting to some tensor level $n\in \None$
\begin{align}\label{eq:projection_adjoint_powerseries}
\begin{split}
\pi_n \left( \sum_{n \ge 0}a_k(\ad\bx)^{k}(\by) \right)
=&\; \sum_{k = 0}^{n-1}a_k \pi_n\big( (\ad\bx)^{k}(\by) \big) \\
=&\; a_0 \by^{(n)} + \sum_{k = 1}^{n-1}a_{k} \sum_{\Abs{\ell} = n, \abs{\ell} = k + 1} (\ad\bx^{(l_2)}\circ \cdots \circ\ad\bx^{(l_{k+1})})(\by^{(l_1)}),
\end{split}
\end{align}
where the inner summation in the right-hand side is over a finite set of multi-indices $\ell = (l_1,
\dotsc, l_{k+1})\in (\None)^{k+1}$ where $\abs{\ell} \coloneq k+1$ and $\Abs{\ell} \coloneq l_1 + \dots + l_{k+1}$.
In the following we will simply write $(\ad\bx\ad\by) \equiv (\ad\bx \circ \ad\by)$ for the composition of adjoint operators. Further, when $\ell = (l_1)$ is a multi-index of length one, we will use the notation $(\ad\bx^{(l_2)} \cdots \ad\bx^{(l_{k+1})}) \equiv \Id$.
Note also that the iteration of adjoint operations can be explicitly expanded in terms of left- and right-multiplication as follows
\begin{align}\label{eq:integral_adjoint_power}
\ad\by^{(l_2)}\cdots\ad\by^{(l_{k+1})}(\bx^{(l_1)}_u)
&=\sum_{I\dot\cup J = \{1, \dots, k\}} (-1)^{\abs{J}}  \left(\prod_{i\in I} \by^{(l_{i+1})}
\right)\bx^{(l_1)}_u  \left(\prod_{j\in J} \by^{(l_{j+1})}\right).
\end{align}

For a word $w \in \W_d$ with $|w|>0$ we define the directional derivative for a function $f\colon \TT \to \R$ by
\begin{align*}
(\partial_w f)(a)\coloneq\partial_t (f(a + t e_w))\big\vert_{t=0},
\end{align*}
for any $a \in \TT$ such that the right-hand derivative exists.

Write $\mul\colon\TT \otimes \TT \to \TT$ for
multiplication (concatenation) map, i.e. \(\mul (a \otimes b) = ab\), in general different from $ba$, extended by linearity. For linear maps $g, f\colon\TT\to\TT$ we define $g
\odot f = \mul\circ(g \otimes f)$, i.e.
\begin{align*}
(g \odot f) (a \otimes b) = g(a)f(b), \quad a,b\in\TT,
\end{align*}
extended by linearity.

\subsection{Some quotients of the tensor algebra}\label{sec:trunc_tensor}
The {\it symmetric algebra} over \(\Rd\), denoted by \(S(\Rd)\) is the quotient of \(T(\Rd)\) by the
two-sided ideal \(I\) generated by \(\{xy-yx:x,y\in\Rd\}\). The canonical projection
\(T(\Rd)\twoheadrightarrow S(\Rd), \bx \mapsto \hat \bx \), is an algebra epimorphism.
A linear basis of $S(\Rd)$ is then given by $\{ \hat e_w \}$ over non-decreasing words,
$w=(i_1,\dotsc,i_n) \in \widehat \W_d$, with $1 \le i_1 \le \dots \le i_n \le d, n \ge 0$. Every $\tilde \bx \in S(\R^d)$ can be written as finite sum,
$$
     \tilde \bx = \sum_{w \in \widehat \W_d} \tilde \bx^w \hat e_w ,
$$
and we have an immediate identification with polynomials in $d$ commuting indeterminates. The canonical projection map extends to
an epimorphism \(\TT\twoheadrightarrow\Sy\) where $\TT = T \dparl \R^d\dparr$ and $\Sy = S \dparl
\R^d\dparr$ are the respective completions, identifiable as formal series in $d$ non-commuting (resp. commuting) indeterminates. As a vector space, $\Sy$ 
can be identified with {\it symmetric} formal tensor series. Denote by $ \Sy_0$ and $\Sy_1$ the affine space determined by \(\tilde \bx^\emptyset=0\) (resp. \( \tilde \bx^\emptyset=1\)).
The usual power series in $\Sy$ define $\hatexp{}\colon \Sy_0 \to \Sy_1$ with inverse
$\hatlog{}\colon \Sy_1 \to \Sy_0$ and we have
\begin{align*}
\widehat{\exp{(\bx + \by)}} &= \hatexp{}(\hat \bx)\hatexp{}(\hat \by), \quad \bx, \by \in \tf\\
\widehat{\log{(\bx \by)}} &= \hatlog{}(\hat \bx) + \hatlog{}(\hat \by), \quad \bx,\by \in \TT_1.
\end{align*}
We shall abuse notation in what follows and write $\exp$ (resp. $\log$), instead of $\hatexp$ (resp. $ \hatlog$).

\subsubsection{The (step-$n$) truncated tensor algebra}\label{sec:truncated_tensor_algebra}
For \(n\in\N\), the subspace \[ \ideal_n\coloneq\prod_{k=n+1}^\infty(\Rd)^{\otimes k} \] is a two sided
ideal of \(\TT\).
Therefore, the quotient space \(\TT/\ideal_n\) has a natural algebra structure. We denote the \emph{projection map} by \(\pi_{(0,n)}\).
We can identify \(\TT/\ideal_n\) with %
\[
    \TT^n \coloneq \bigoplus_{k=0}^n(\Rd)^{\otimes k},
\]
equipped with truncated tensor product,
$$
\bx\by = \sum_{k=0}^{n} \sum_{\ell_1 + \ell_2=k} \bx^{(\ell_1)} \by^{(\ell_2)}  = \sum_{w \in \W^d,|w|\le n} \left( \sum_{w_1w_2 = w} \bx^{w_1}\by^{w_2} \right) e_w \in \TT^n.
$$
The sequence of algebras \((\TT^n:n\ge 0)\) forms an inverse system with limit \(\TT\).
There are also canonical inclusions \(\TT^k\hookrightarrow\TT^{n}\) for \(k\le n\); in fact, this
forms a direct system with limit \(T(\Rd)\). The usual power series in $\TT^{n}$ define $\exp_n\colon
\TT^n_0 \to \TT^n_1$ with inverse $\log_n\colon \TT^n_1 \to \TT^n_0$, we may again abuse notation and write $\exp$ and $\log$ when no confusion arises.
As before, $\TT^n_0$ has a natural Lie algebra structure, and $\TT^n_1$ (now finite dimensional) is
a \emph{bona fide} Lie group.

We equip $T(\R^d)$ with the norm
\begin{align*}
    |a|_{T(\R^d)} \coloneq  \max_{k\in\N}|a^{(n)}|_{(\Rd)^{\otimes k}},
\end{align*}
where $|\cdot|_{(\Rd)^{\otimes k}}$ is the euclidean norm on $(\Rd)^{\otimes k}\cong\R^{d^k}$, which makes it a Banach space.
The same norm makes sense in \(\TT^n\), and since the definition is consistent in the sense that $|a|_{\TT_k} = |a|_{\TT_n}$ for any $a \in \TT^{n}$ and $k \ge n$ and $|a|_{\TT_n} = |a|_{(\Rd)^{\otimes n}}$ for any $a \in (\Rd)^{\otimes n}$. We will drop the index whenever it is possible and write simply $|a|$.

\subsection{Semimartingales}\label{sec:tensor_semimartingales}
Let $\D$ be the space of adapted càdlàg process $X\colon\Omega \times [0,T) \to \mathbb{R}$ with $T\in(0,\infty]$ defined on some
filtered probability space $(\Omega, (\F_t)_{0 \le t \le T}, \mathbb{P})$.
The space of {\em semimartingales} $\Se$ is given by the processes $X\in\D$ that can be decomposed as
$$
     X_{t}=X_0+M_{t}+A_{t},
$$
where $M \in \Ma_\loc$ is a càdlàg local martingale, and $A\in\Fv$ is a càdlàg adapted process of locally bounded variation, both started at zero.
Recall that every $X \in \Se$ has a well-defined continuous local martingale part denoted by $X^c\in \Mac_\loc$.
The quadratic variation process of $X$ is then given by
\begin{align*}
[X]_t = \QV{X^{c}}_t + \sum_{0 < u \le t} (\Delta X_u)^{2}, \quad 0 \le t \le T,
\end{align*}
where $\QV{\cdot}$ denotes the (predictable) quadratic variation of a continuous semimartingale. Covariation square resp. angle brackets $[X,Y]$ and $\CV{X^{c}}{Y^{c}}$, for another real-valued semimartingale $Y$, are defined by polarization.
For $q \in [1, \infty)$, write $\Lcal^{q} = L^{q}(\Omega, \F, \PM)$, then a Banach space $\HSe^q \subset \Se$ is given by those $X\in\Se$ with $X_0 = 0$ and
$$
 \| X \|_{\HSe^q} \coloneq \inf_{X = M + A} \bigg\Vert \GQV{M}^{1 / 2}_{T} + \int_0^{T} |\dd A_s | \bigg\Vert_{\Lcal^{q}}< \infty.
$$
Note that for local martingale $M \in \Ma_\loc$ it holds (see \cite[Ch. V, p. 245]{protter2005stochastic})$$\Abs{M}_{\HSe^{q}} = \Abs{\GQV{M}^{1/2}_T}_{\Lcal^{q}}.$$
For a process $X\in\D$ we define
\begin{align*}
\Vert{X}\Vert_{\Sesup^q} \coloneq \Big\Vert{\sup_{0 \le t \le T}\abs{X_t}}\Big\Vert_{\Lcal^{q}}
\end{align*}
and define the space $\Se^q\subset\Se$ of semimartingales $X\in\Se$ such that $\Vert{X}\Vert_{\Sesup^q}< \infty$.
Note that there exits a constant $c_q>0$ depending on $q$ such that (see \cite[Ch. V, Theorem 2]{protter2005stochastic})
\begin{align}\label{eq:Sesup_vs_HSe}
\Vert{X}\Vert_{\Sesup^q} \le c_q  \| X \|_{\HSe^q}.
\end{align}

We view $d$-dimensional semimartingales, $X= \sum_{i=1}^d X^i e_i \in \Se (\R^d)$, as special cases of tensor series valued semimartingales $\Se (\T)$ of the form
$$\bX = \sum_{w \in \W_d} \bX^w e_w$$
with each component $\bX^w$ a real-valued semimartingale. (This extends mutatis mutandis to the spaces $\D, \Ma, \Fv$.
Note also that we typically deal with $\TT_0$-valued semimartingales which amounts to have only words with length $|w| \ge 1$.)
Standard notions such as continuous local martingale $\bX^{c}$ and jump process $\Delta \bX_t = \bX_t - \bX_{t^-}$ are defined componentwise.

{\bf Brackets}: Now let $\bX$ and $\bY$ be $\TT$-valued semimartingales.
We define the (non-commutative) \emph{outer quadratic covariation bracket} of $\bX$ and $\bY$ by
\[
\outerbracket{\bX}{\bY}_t
\coloneq \sum_{w_1, w_2\in\W_d}[\bX^{w_1}, \bY^{w_2}]_te_{w_1} \otimes e_{w_2}\in \TT \otimes \TT.
\]
Similarly, define the (non-commutative) {\em inner quadratic covariation bracket} by 
\begin{align*}
    [\bX,\bY]_t \coloneq \mul(\outerbracket{\bX}{\bY}) = \sum_{w\in\W_d
    }\left(\sum_{w_1 w_2 = w}
   [\bX^{w_1},\bY^{w_2}]_t \right)e_w \in \TT;
\end{align*}
for {\em continuous} $\TT$-valued semimartingales $\bX, \bY$, this coincides with the predictable quadratic covariation
\[
\CV{\bX^{}}{\bY^{}}_t \coloneq \sum_{w\in\W_d
}\left(\sum_{w_1 w_2 = w}
   \innerbracket{\bX^{w_1}}{\bY^{w_2}}_t \right)e_w \in \TT.
\]
As usual, we may write ${\left\llbracket \bX \right\rrbracket} \equiv \outerbracket{\bX}{\bX}$ and $\QV{\bX} \equiv   \CV{\bX}{\bX}$.

{\bf $\HSe$-spaces}: The definition of $\HSe^{q}$-norm naturally extends to tensor valued martingales.
More precisely, for $\bX^{(n)} \in \Se((\Rd)^{\otimes n})$ with $n\in\None$ and $q\in[1,\infty)$ we define
\begin{equation*}
\Abss{\bX^{(n)}}_{\HSe^{q}} \coloneq \Abss{\bX^{(n)}}_{\HSe^{q}((\Rd)^{\otimes n})} \coloneq
\inf_{\bX^{(n)}= \bM + \bA}\Abs{\abs{\GQV{\bM}}_T^{1/2} + \abs{\bA}_{1-\mathrm{var};[0;T]}}_{\Lcal^{q}},
\end{equation*}
where the infimum is taken over all possible decompositions $\bX^{(n)} = \bM + \bA$ with $\bM\in\Ma_\loc((\Rd)^{\otimes n})$ and $\bA\in\Fv((\Rd)^{\otimes n})$, where
\begin{equation*}
\abs{\bA}_{1-\var;[0;T]}\coloneq\sup_{0 \le t_1 \le \dotsb \le t_k \le T}
\sum_{t_i}\abs{\bA_{t_{i+1}} - \bA_{t_{i}}} \le \sum_{w\in\W_d, |w|=n} \int_0^{T}\abs{\dd A^{w}_s},
\end{equation*}
with the supremum taken over all partitions of the interval $[0,T]$.
One may readily check that
\begin{equation*}
\Abss{\bX^{(n)}}_{\HSe^{q}} \le \sum_{w\in\W_d, |w|=n} \Abs{X^{w}}_{\HSe^{q}};\quad \text{ and for }\bX^{(n)}\in\Ma_\loc:\; \Abss{\bX^{(n)}}_{\HSe^{q}} = \Abss{\abss{\GQVsmall{\bX^{(n)}}}_T}_{\Lcal^{q}}.
\end{equation*}
Further define the following subspace $\HSehom^{q,N} \subset \Se(\tf^{N})$ of homogeneously integrable semimartingales
\begin{equation*}
\HSehom^{q,N} \coloneq\left\{ \bX \in \Se(\tf^{N}) \;\Big\vert\; \bX_0 = 0,\; \homnqN{\bX} < \infty \right\},
\end{equation*}
where for any $\bX \in \Se(\TT^{N})$ we define
\begin{equation*}
\homnqN{\bX} \coloneq \sum_{n=1}^{N} \big(\Abss{\bX^{(n)}}_{\HSe^{qN/n}}\big)^{1/n}.
\end{equation*}
Note that $\homnqNs{\cdot}$ is sub-additive and positive definite on $\HSehom^{q, N}$ and it is homogeneous under dilation in the sense that
\begin{equation*}
\homnqN{\delta_{\lambda}\bX} = \abs{\lambda}\,\homnqN{\bX}, \quad \delta_\lambda \bX \coloneq
(\bX^{(0)}, \lambda \bX^{(1)}, \dotsc, \lambda^{N}\bX^{(N)}), \quad \lambda \in \R.
\end{equation*}
We also introduce the following subspace of $\Se(\TT)$
\begin{equation*}
\HSe^{\infty-}(\TT) \coloneq \left\{ \bX \in \Se(\TT):\; \bX^w\in\HSe^q, \;\forall\, 1 \le q < \infty,\;w\in\W_d\right\}.
\end{equation*}
Note that if $\bX\in\Se(\TT)$ such that $\homns{\bX^{(0,N)}} < \infty$ for all $N\in\None$ then it also holds $\bX\in\HSe^{\infty-}(\TT)$.

{\bf Stochastic integrals}: We are now going to introduce a notation for the stochastic integration with respect to tensor valued semimartingales.
Let $\bF: \Omega \times [0,T] \to \mathcal{L}(\TT; \TT)$ with $(t, \omega) \mapsto \bF_t(\omega; \cdot)$ such that it holds
\begin{align}\label{cond:tensor_integrator_one}
&(\bF_t(\bx))_{0 \le t \le T} \in \D(\TT), \quad \text{for all } \bx\in\TT\\\label{cond:tensor_integrator_two}
\text{and}\quad &\bF_t(\omega; \ideal_n) \subset \ideal_n,  \quad \text{for all } n\in\N, \; (\omega, t)\in\Omega\times[0,T],
\end{align}
where $\ideal_n\subset\TT$ was introduced in \Cref{sec:truncated_tensor_algebra}, consisting of series with tensors of level $n$ and higher.
In this case, we can define the stochastic Itô-integral (and then analogously the Stratonovich/Marcus integral) of $\bF$ with respect to $\bX\in\Se(\TT)$ by
\begin{align}\label{eq:definition_tensor_sotch_integral}
\int_{(0, \cdot]} \mathbf F_{t-}(\dd\bX_t) :=\sum_{w\in\W_d}\;\sum_{v\in\W_d,\,\abs{v} \le \abs{w}}\; \int_{(0, \cdot]}  \mathbf  F_{t-}(e_v)^{w}\dd\bX_t^{v} e_w \in \Se(\TT).
\end{align}
For example, let $\bY, \bZ \in \D(\TT)$ and define $\bF := \bY\,\Id\,\bZ$, i.e. $\bF_t(\bx) = \bY_t \, \bx \, \bZ_t$ for all $\bx\in\TT$.
Then we see that $\bF$ indeed satisfies the conditions \eqref{cond:tensor_integrator_one} and \eqref{cond:tensor_integrator_two} and we have
\begin{equation}\label{eq:def_tensor_ito_integral}
\int_{(0, \cdot]} (\bY_{t-}\,\Id\,\bZ_{t-})(\dd \bX_t)= \int_{(0, \cdot]} \bY_{t-}\dd \bX_t \bZ_{t-}= \sum_{w\in\W_d}\left( \sum_{w_1 w_2w_3 = w} \int_{(0, \cdot]}  \mathbf  Z^{w_1}_{t-} \bY^{w_3}_{t-}\,\mathrm d \bX_t^{w_2} \right)e_w.
\end{equation}
Another important example is given by $\bF = (\ad \bY)^{k}$ for any $\bY\in\D(\tf)$ and $k\in\N$.
Indeed, we immediately see $\bF$ satisfies the condition \eqref{cond:tensor_integrator_two} and recalling from \eqref{eq:integral_adjoint_power} that the iteration of adjoint operations can be expanded in terms of left- and right-multiplication, we also see that $\bF$ satisfies \eqref{cond:tensor_integrator_one}.
More generally, let $(a_k)_{k=0}^{\infty}\subset\R$ and let $\bX\in\Se(\tf)$, then the following integral
\begin{align}\label{eq:power_series_integral}
\int_{(0, \cdot]}\left[ \sum_{k=0}^{\infty} a_k (\ad\bY_{t-})^{k} \right](\dd\bX_t) = \sum_{n= 1}^{\infty}\sum_{k=0}^{n-1}\; \sum_{\Abs{\ell}=n,\, \abs{\ell} = k+1} \int_{(0,\cdot]}
\ad\bY^{(l_2)}_{t-}\cdots\ad\bY^{(l_{k+1})}_{t-}(\dd\bX^{(l_1)}_t)
\end{align}
is well define in the sense \eqref{eq:def_tensor_ito_integral}.
The definition of the integral with integrands of the form $\bF: \Omega \times [0,T] \to \mathcal{L}(\TT \otimes \TT; \TT)$ with respect to processes $\bX \in \Se(\TT \otimes \TT)$ is completely analogous.

{\bf Quotient algebras:} All of this extends in a straight forward way to the case of semimartingales in the quotient algebra of Section \ref{sec:trunc_tensor}, i.e. symmetric and truncated algebra.
In particular, given 
\(\bX\) and \(\bY\) in \(\Se(\Sy)\) have well-defined continuous local martingale parts denoted by \(\bX^c,\bY^c\) respectively, with {\em inner} (predictable) quadratic covariation given by
\[
  \langle\bX^c,\bY^c\rangle
  =\sum_{w_1,w_2\in \widehat \W_d}\langle \bX^{w_1,c},\bY^{w_2,c}\rangle\hat e_{w_1}\hat e_{w_2} .
\]
Write $\Sy^N$ for the truncated symmetric algebra, linearly spanned by $\{ \hat e_{w}: w \in \widehat \W_d, |w| \le N\}$
and $\Sy^N_0$ for those
elements with zero scalar entry. In complete analogy with non-commutative setting discussed above, we then write $\HSehomSym^{q,N} \subset \Se(\Sy^N_0)$ for the corresponding space homogeneously $q$-integrable semimartingales.

\subsection{Diamond Products}
We extend the notion of the {\em diamond product} introduced in \cite{alos2018exponentiation} for continuous scalar semimartingales to our setting.
\begin{definition} \label{def:diamondSym}
 For \(\bX\) and \(\bY\) in \(\Se(\TT)\) define
$$
(\bX \diamond \bY)_t(T) \coloneq \E_t  \big( \QV{\bX^c, \bY^c}_{t,T}
\big)=\sum_{w\in\W_d}\left( \sum_{w_1w_2=w}(\bX^{w_1}\diamond\bY^{w_2})_t(T) \right)e_w \in \TT
$$
whenever the $\TT$-valued quadratic covariation which appears  on the right-hand side is integrable.
Similar to the previous section, we also define an \emph{outer diamond}, for \(\bX,\bY\in\TT\), by
\[
  (\bX\blackdiamond\bY)_t(T)\coloneq\E_t(\outerbracket{\bX^c}{\bY^c}_{t,T})=\sum_{w_1,w_2\in\W_d}(\bX^{w_1}\diamond\bY^{w_2})_t(T)e_{w_1}\otimes
  e_{w_2}\in\TT\otimes\TT.
\]
\end{definition}
This definition extends immediately to semimartingales with values in the quotient algebras of Section \ref{sec:trunc_tensor}.
In particular, given \(\tilde\bX\) and \(\tilde\bY\) in \(\Se(\Sy)\), we have
$$
         (\tilde \bX \diamond \tilde \bY)_t(T) \coloneq \E_t \big( \langle \tilde \bX^c, \tilde \bY^c \rangle_{t,T} \big) = \sum_{w_1,w_1\in\mathcal
  W_d}  (\tilde\bX^{w_1} \diamond \tilde\bY^{w_2})_t(T)
  \hat e_{w_1}\hat e_{w_2}  \in \Sy,
$$
where the last expression is given in terms of diamond products of scalar semimartingales.

\begin{lemma} \label{lem:dm}
  Let $p,q,r\in[1,\infty)$ such that $1/p + 1/q + 1/r < 1$ and let $X\in\Ma_{\loc}^{c}((\Rd)^{\otimes l})$, \break $Y\in\Ma_{\loc}^{c}((\Rd)^{\otimes m})$, and $Z\in\D((\Rd)^{\otimes n})$ with $l,m,n\in\N$, such that $\Abs{X}_{\HSe^{p}}, \Abs{Y}_{\HSe^{q}}, \Abs{Z}_{\Se^{r}} <\infty$ then it holds for all $0 \le t \le T$
\begin{align*}
\E_t\left(\int_t^{T}Z_{u-}\dd(X \diamond Y)_u(T)\right) = -\E_t\left(\int_{t}^T Z_{u-}\dd\CV{X}{Y}_u\right).
\end{align*}
\end{lemma}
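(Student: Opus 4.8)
The plan is to exhibit the diamond product, viewed as a process in its lower time index, as a semimartingale whose martingale part integrates to zero under $\E_t$. Fix $T$. Since $X,Y\in\Ma_{\loc}^{c}$ we have $X^{c}=X$ and $Y^{c}=Y$, so that by \Cref{def:diamondSym}, and because $\CV{X}{Y}_t$ is $\F_t$-measurable,
\[
(X\diamond Y)_t(T)=\E_t\big(\CV{X}{Y}_{t,T}\big)=N_t-\CV{X}{Y}_t,\qquad N_t\coloneq\E_t\big(\CV{X}{Y}_T\big).
\]
Componentwise, $N$ is a uniformly integrable martingale as soon as $\CV{X}{Y}_T$ is integrable, and $\CV{X}{Y}$ is continuous of finite variation. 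Hence $t\mapsto(X\diamond Y)_t(T)$ is a semimartingale with martingale part $N$ and finite-variation part $-\CV{X}{Y}$, giving
\[
\int_t^{T}Z_{u-}\,\dd(X\diamond Y)_u(T)=\int_t^{T}Z_{u-}\,\dd N_u-\int_t^{T}Z_{u-}\,\dd\CV{X}{Y}_u.
\]
Applying $\E_t$, the claimed identity reduces to showing $\E_t\big(\int_t^{T}Z_{u-}\,\dd N_u\big)=0$, i.e.\ that $\int_t^{\cdot}Z_{u-}\,\dd N_u$ is a true martingale.

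For the integrability input I would work componentwise, each entry of $\CV{X}{Y}$ being a scalar covariation $\CV{X^{w_1}}{Y^{w_2}}$. The Kunita--Watanabe inequality bounds its total variation on $[0,T]$ by $\QV{X^{w_1}}_T^{1/2}\QV{Y^{w_2}}_T^{1/2}$, so Hölder's inequality with $1/s=1/p+1/q$ and the hypotheses $\Abs{X}_{\HSe^{p}},\Abs{Y}_{\HSe^{q}}<\infty$ yield $\CV{X^{w_1}}{Y^{w_2}}_T\in\Lcal^{s}$, with $s>1$ since $1/p+1/q<1$. In particular $N$ is genuinely UI, and by the Burkholder--Davis--Gundy and Doob inequalities $\GQV{N^{w_2}}_T^{1/2}\in\Lcal^{s}$ as well.

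It remains to promote the local martingale $\int_t^{\cdot}Z_{u-}\,\dd N_u$ to a true one; this is where the full strength of $1/p+1/q+1/r<1$ is used. From $\GQV{\int Z^{w_1}_{-}\,\dd N^{w_2}}=\int(Z^{w_1}_{u-})^{2}\,\dd\GQV{N^{w_2}}_u$ I would bound
\[
\GQV{\textstyle\int_t^{\cdot}Z^{w_1}_{u-}\,\dd N^{w_2}_u}_T^{1/2}\le\sup_{u}\abs{Z^{w_1}_u}\;\GQV{N^{w_2}}_T^{1/2},
\]
and estimate the right-hand side in $\Lcal^{\rho}$, $1/\rho=1/r+1/s<1$, by Hölder, using $\Abs{Z}_{\Se^{r}}<\infty$ and $\GQV{N^{w_2}}_T^{1/2}\in\Lcal^{s}$. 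Thus $\E\big(\GQV{\int Z_{-}\,\dd N}_T^{1/2}\big)<\infty$, so by BDG the local martingale is dominated in $\Lcal^{1}$ by an integrable random variable and is therefore a uniformly integrable martingale. Consequently $\E_t\big(\int_t^{T}Z_{u-}\,\dd N_u\big)=0$, which gives the assertion; the tensor structure is reassembled by concatenation, all estimates above being componentwise.

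I expect the main obstacle to be precisely this last \emph{true} (rather than merely local) martingale property: one must upgrade the pathwise Kunita--Watanabe and Hölder bounds to genuine $\Lcal^{1}$-domination of the stochastic integral, and it is exactly the strict inequality $1/p+1/q+1/r<1$ that provides the room to close the threefold Hölder estimate. The passage through tensor indices is routine bookkeeping by comparison.
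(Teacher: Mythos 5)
Your proof is correct and follows essentially the same route as the paper: decompose $(X\diamond Y)_\cdot(T)$ as the martingale $N_t=\E_t\CV{X}{Y}_T$ minus the finite-variation bracket, and reduce everything to showing that $\int Z_{-}\,\dd N$ is a \emph{true} martingale. Your explicit Kunita--Watanabe/H\"older/BDG estimate is precisely the content of the paper's Emery's inequality (\Cref{lem:emerys}) combined with Doob's maximal inequality, which is what the published proof cites instead of writing out.
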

\begin{proof}
Using the Kunita-Watanabe inequality (\Cref{lem:cauchy_schwartz_tensor_bracket}) we see that the expectation on the right hand side is well defined.
Further note that it follows from Emery's inequality (\Cref{lem:emerys}) and Doob's maximal inequality that the local martingale
\begin{align*}
\int_0^{\cdot}Z_{u-}\dd(\E_u\CV{X}{Y}_T)
\end{align*}
is a true martingale.
Recall the definition of the diamond product and observe that the difference of left- and right-hand side  of the above equation is a conditional expectation of a martingale interment and is hence zero.
\end{proof}

\subsection{Generalized signatures}
\label{sec:gensig}
We now give the precise meaning of \eqref{equ:gSig}, that is \(\mathrm dS=S\,{\circ\mathrm
d}\bX\), or component-wise, for every word \(w\in\W_d\),
\[
    \mathrm dS^w=\sum_{w_1w_2=w}S^{w_1}\,{\circ\mathrm d}\bX^{w_2},
\]
where the driving noise \(\bX\) is a \(\tf\)-valued semimartingale, so that $\bX^{\emptyset} \equiv 0$.
 Following \cite{marcus1978modeling,marcus1981modeling,kurtz1995strtonovich,friz2017general,bruned2020quasi} the integral meaning of this equation, started at time $s$ from $\xi \in \T_1$, for times $t \ge s$, is given by
\begin{equation}\label{eq:marcus_signature_ode3}
S_t = \xi + \int_{(s,t]} S_{u-}\,\mathrm d \bX_u + \frac{1}{2}\int_s^{t} S_{u-}\,\mathrm d\QV{ \bX^{c}}_u\\
+ \sum_{s< u \le t} S_{u-}\big(\exp(\Delta \bX_u)-1-\Delta \bX_u\big),
\end{equation}
leaving the component-wise version to the reader. We have
\begin{proposition}
    Let \(\mathbf\xi\in\TT_1\) and suppose \(\bX\) takes values in \(\tf\). For every $s \ge 0$ and $\xi \in \T_1$, equation \eqref{eq:marcus_signature_ode3} has a
    unique global solution on \(\TT_1\) starting from $S_s=\mathbf\xi$.
    \label{prop:sigsol}
\end{proposition}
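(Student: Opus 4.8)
The plan is to exploit the graded (triangular) structure of \eqref{eq:marcus_signature_ode3} forced by $\bX$ being $\tf$-valued, i.e. $\bX^{(0)}\equiv0$. First I would project the equation onto each tensor level $n\ge0$ via $\pi_n$ and observe that, because concatenation with $\bX$ strictly raises the tensor degree, the level-$n$ component of the right-hand side depends only on the strictly lower levels $S^{(0)},\dots,S^{(n-1)}$ of the unknown. Indeed $\pi_n(S_{u-}\,\dd\bX_u)=\sum_{a+b=n,\,b\ge1}S^{(a)}_{u-}\,\dd\bX^{(b)}_u$ involves only $a\le n-1$; since $\QV{\bX^c}$ and $\exp(\Delta\bX_u)-1-\Delta\bX_u$ are supported on levels $\ge2$, the remaining two terms involve only $a\le n-2$. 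In particular $S^{(n)}$ never appears on the right, so the system is not merely triangular but \emph{explicitly} solvable level by level, with no fixed-point argument needed at any stage.

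Given this, I would construct the solution inductively. At level $0$, $\pi_0$ annihilates every integral and the jump-sum, so $S^{(0)}_t\equiv\xi^{(0)}=1$, placing $S$ in $\TT_1$ from the outset. Assuming $S^{(0)},\dots,S^{(n-1)}$ have been built as càdlàg semimartingales, I would \emph{define} $S^{(n)}$ by the explicit level-$n$ right-hand side. Here each piece must be checked well defined: the stochastic integrals $\int_{(s,\cdot]}S^{(a)}_{u-}\,\dd\bX^{(b)}_u$ make sense in the sense of \eqref{eq:definition_tensor_sotch_integral} because $S^{(a)}_{u-}$ is càglàd, hence locally bounded and previsible, a legitimate integrand against the semimartingale $\bX^{(b)}$; the Riemann--Stieltjes integral against $\QV{\bX^c}$ is fine since its integrator has locally finite variation; and the jump-sum converges absolutely on compacts because $\exp(x)-1-x$ vanishes to second order, so $|(\exp(\Delta\bX_u)-1-\Delta\bX_u)^{(c)}|$ is bounded by a polynomial in $|\Delta\bX_u|$ of lowest degree $2$, whence $\sum_{s<u\le t}|\Delta\bX_u|^2\le[\bX]_t<\infty$ together with local boundedness of $S^{(a)}_{u-}$ gives summability. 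Each $S^{(n)}$ is then a sum of a semimartingale, an FV process and a pure-jump FV process, hence itself a càdlàg semimartingale, closing the induction; assembling the levels yields a $\TT_1$-valued semimartingale $S$ solving \eqref{eq:marcus_signature_ode3} on the whole interval. Globality is automatic, as the explicit level-wise formulas never explode.

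Uniqueness would follow from the same triangular structure: if $S,\tilde S$ both solve \eqref{eq:marcus_signature_ode3} with $S_s=\tilde S_s=\xi$, then $S^{(0)}=\tilde S^{(0)}=1$, and inductively, once $S^{(a)}=\tilde S^{(a)}$ for all $a<n$, the level-$n$ right-hand sides coincide, forcing $S^{(n)}=\tilde S^{(n)}$.

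I expect the main technical obstacle to be the careful bookkeeping in the jump term: verifying absolute summability of $\sum_{s<u\le t}S^{(a)}_{u-}(\exp(\Delta\bX_u)-1-\Delta\bX_u)^{(c)}$ uniformly on compacts, and confirming that the process so defined is genuinely càdlàg with the prescribed jumps $\Delta S_u=S_{u-}(\exp(\Delta\bX_u)-1)$, so that the integral equation is self-consistent at jump times. This amounts to a routine localization (stopping at the finitely many times on each compact where $|\Delta\bX|$ is large, and controlling the small-jump part by $[\bX]$), made tractable by the degree-$2$ vanishing of $\exp(x)-1-x$ and the local boundedness of the lower levels.
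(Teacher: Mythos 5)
Your proposal is correct and follows essentially the same route as the paper: both exploit the graded structure ($\bX^{(0)}\equiv 0$ forces the level-$n$ right-hand side to depend only on $S^{(0)},\dots,S^{(n-1)}$), so the unique solution is obtained explicitly by iterated integration level by level, with no fixed-point argument. The paper's proof is just a terser version of yours (it additionally normalizes to $\xi=1$ via left-multiplication by $\xi^{-1}$, which you handle directly); your extra care with the well-definedness of the jump sum is sound but the paper leaves it implicit.
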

\begin{proof}
   Note that $S$ solves \eqref{eq:marcus_signature_ode3} iff $\xi^{-1} S$
   solves the same equation started from $1 \in \T_1$. We may thus take $\xi = 1$ without loss of
   generality. The graded structure of our problem, and more precisely that $\bX = (0,X,\mathbb{X},\dots)$
    in \eqref{eq:marcus_signature_ode3} has no scalar component, shows that the (necessarily)
    unique solution is given explicitly by iterated integration, as may be seen explicitly when writing out
    $S^{(0)} \equiv 1$, $S^{(1)}_t = \int_s^t \mathrm d X = X_{s,t} \in \R^d$,
      $$
      S^{(2)}_t = \int_{(s,t]} S^{(1)}_{u-}\,\mathrm d X_u +\mathbb{X}_{t} -\mathbb{X}_{s} + \frac{1}{2} \QV{ X^{c}}_{s,t} +
           \frac{1}{2} \sum_{s< u \le t} (\Delta X_u)^2 \in (\R^d)^{\otimes 2}.
   $$
   and so on. (In particular, we do not need to rely on abstract existence, uniqueness results for Marcus SDEs \cite{kurtz1995strtonovich} or Lie group stochastic exponentials \cite{hakim1986exponentielle}.)
   \end{proof}

\begin{definition}
    Let \(\bX\) be a \(\tf\)-valued semimartingale defined on some interval $[s,t]$. 
    Then
    $$ \Sig (\bX \vert_{[s,t]}) \equiv \Sig(\bX)_{s,t}$$ is defined to be the unique solution to \eqref{eq:marcus_signature_ode3} on \([s,t]\), such
    that \(\Sig(\bX)_{s,s}=1\).
\end{definition}

The following can be seen as a (generalized) Chen relation.

\begin{lemma} \label{lem:Chen}
Let $\bX$ be a  \(\tf\)-valued semimartingales on $[0,T]$ and $0 \le s \le t \le u \le T$. Then the following identity holds with probability one, for all such $s,t,u$,
        \begin{equation}\label{eq:chen_identity}
         \Sig(\bX)_{s,t}\Sig(\bX)_{t,u}=\Sig(\bX)_{s,u}.
         \end{equation}
\end{lemma}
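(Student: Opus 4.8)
The plan is to leverage the uniqueness assertion of \Cref{prop:sigsol}, reducing the Chen relation to the observation that left multiplication by a fixed $\F_t$-measurable element commutes with all the terms appearing in the Marcus equation \eqref{eq:marcus_signature_ode3}. Fix $s \le t$ and abbreviate $C \coloneq \Sig(\bX)_{s,t} \in \TT_1$, which is $\F_t$-measurable. I would first show that, restricted to $r \in [t,T]$, the process $r \mapsto \Sig(\bX)_{s,r}$ solves \eqref{eq:marcus_signature_ode3} started at time $t$ from $C$. This is obtained by splitting each of the three driving terms in \eqref{eq:marcus_signature_ode3} (the It\^o integral, the Riemann--Stieltjes bracket integral and the jump sum) over $(s,t]$ and $(t,r]$; by definition of $C$ the contributions over $(s,t]$ collapse, together with the scalar $1$, to $C$.

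Next I would identify the candidate for the other side. Since $\Sig(\bX)_{t,\cdot}$ solves \eqref{eq:marcus_signature_ode3} on $[t,T]$ started from $1$, I claim that $Y_r \coloneq C\,\Sig(\bX)_{t,r}$ solves the \emph{same} equation started from $C$ at time $t$. This is exactly the left-invariance already recorded in the proof of \Cref{prop:sigsol} (``$S$ solves iff $C^{-1}S$ solves started from $1$''), now applied with the $\F_t$-measurable left factor $C$ rather than a deterministic one. Concretely, for $v > t$ the constant-in-time process $v \mapsto C$ is $\F_{v-}$-measurable, hence predictable, so by the very definition \eqref{eq:def_tensor_ito_integral} of the tensor integral and associativity of the concatenation product one may pull $C$ through the It\^o integral, the bracket integral, and (trivially) each jump increment $Y_{v-}(\exp(\Delta\bX_v) - 1 - \Delta\bX_v) = C\,\Sig(\bX)_{t,v-}(\exp(\Delta\bX_v)-1-\Delta\bX_v)$.

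Having exhibited two solutions of \eqref{eq:marcus_signature_ode3} on $[t,T]$ with the same $\F_t$-measurable initial datum $C$ at time $t$, uniqueness (\Cref{prop:sigsol}, whose proof proceeds purely level by level and is therefore insensitive to randomizing the initial condition) forces $\Sig(\bX)_{s,r} = C\,\Sig(\bX)_{t,r}$ for all $r \in [t,T]$; evaluating at $r = u$ yields \eqref{eq:chen_identity} for the fixed triple $(s,t,u)$.

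The step I expect to be the main obstacle is twofold, and both points are of a bookkeeping rather than conceptual nature. First, the commutation of left multiplication by the \emph{random} $\F_t$-measurable $C$ with the stochastic integral must be justified from \eqref{eq:def_tensor_ito_integral}: writing $(C\,\Sig(\bX)_{t,v-})^{w_1} = \sum_{ab=w_1} C^a \Sig(\bX)_{t,v-}^b$ and pulling the $\F_t$-measurable scalars $C^a$ through the scalar It\^o integrals is the content of ``taking out what is known''. Second, \eqref{eq:chen_identity} is asserted to hold almost surely \emph{simultaneously} for all $0 \le s \le t \le u \le T$; the argument above gives, for each fixed triple, an almost sure identity, and I would upgrade this to a single null set by establishing it on a countable dense set of triples and invoking the \cadlag regularity of $(s,t,u)\mapsto \Sig(\bX)_{s,t}\Sig(\bX)_{t,u}$ and $(s,u)\mapsto\Sig(\bX)_{s,u}$ in their arguments.
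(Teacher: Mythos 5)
Your proposal is correct and follows essentially the same route as the paper: the paper's proof likewise combines the flow property of the unique solution to \eqref{eq:marcus_signature_ode3} (i.e.\ $\Phi_{u\leftarrow t}\circ\Phi_{t\leftarrow s}=\Phi_{u\leftarrow s}$) with the multiplicative structure $\Phi_{t\leftarrow s}\xi=\xi\,\Sig(\bX)_{s,t}$, and then invokes uniqueness. You merely spell out in more detail the ``taking out what is known'' step for the random $\F_t$-measurable left factor and the upgrade to a single null set, both of which the paper leaves implicit.
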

\begin{proof} Call $\Phi_{t \leftarrow s} \xi \coloneq S_t$ the solution to \eqref{eq:marcus_signature_ode3} at time $t \ge s$, started from $S_s = \xi$. By uniqueness of the solution flow, we have
$
             \Phi_{u \leftarrow t} \circ \Phi_{t \leftarrow s} = \Phi_{u \leftarrow s} .
$
It now suffices to remark that, thanks to the multiplicative structure of \eqref{eq:marcus_signature_ode3} we have
$               \Phi_{t \leftarrow s} \xi = \xi  \Sig(\bX)_{s,t}$.
\end{proof}

\section{Expected signatures and signature cumulants}

\subsection{Definitions and existence}
Throughout this section let $\bX \in \Se(\tf)$  be defined on a filtered probability space $(\Omega, \F, (\F_t)_{0 \le t \le T}, \PM)$.
When $\E(|\Sig(\bX)^w_{0,t}|)<\infty$ for all $0 \le t \le T$ and all words $w\in\W_d$, then the \emph{(conditional) expected signature}
\begin{equation*}
\esig_t(T) \coloneq \E_t\left(\Sig(\bX)_{t,T}\right) = \sum_{w\in\W_d}\E_t(\Sig(\bX)^w_{t,T})e_w \in \TT_1, \quad 0 \le t \le T,
\end{equation*}
is well defined with $\E_t$ denoting the conditional expectation with respect to the sigma algebra $\F_t$.
In this case, we can also define the \emph{(conditional) signature cumulant} of $\bX$ by
\begin{align*}
\kapT{t}\coloneq\log\left(\E_t\left(\esig_t(T)\right)\right) \in \tf, \quad 0 \le t \le T.
\end{align*}
An important observation is the following
\begin{lemma}
Given $\E(|\Sig(\bX)^w_{0,t}|)<\infty$ for all $0 \le t \le T$ and words $w\in\W_d$, then $\esig(T) \in \Se(\TT_1)$ and $\kap(T)\in\Se(\tf)$.
\end{lemma}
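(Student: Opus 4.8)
The plan is to realise $\esig(T)$ as a product of two manifestly semimartingale-valued processes by exploiting Chen's identity, and then to obtain $\kap(T)$ by applying the tensor logarithm level by level. The point is that the joint dependence of $\Sig(\bX)_{t,T}$ on $t$, both through the conditioning $\F_t$ and through the lower integration limit, is decoupled by \Cref{lem:Chen}.

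First I would record that the signature itself is a semimartingale. By \Cref{prop:sigsol} the process $t \mapsto \Sig(\bX)_{0,t}$ solves \eqref{eq:marcus_signature_ode3}, whose right-hand side is, in each word component $w$, the sum of an Itô integral against the semimartingale $\bX$, a Riemann--Stieltjes integral against $\QV{\bX^c}$, and a locally finite jump sum; hence every $\Sig(\bX)_{0,\cdot}^w$ is a real semimartingale and $\Sig(\bX)_{0,\cdot} \in \Se(\TT_1)$. The algebraic inverse is, at each tensor level, the terminating sum $\pi_n(\Sig(\bX)_{0,t}^{-1}) = \pi_n \sum_{k=0}^{n}(1-\Sig(\bX)_{0,t})^k$, a fixed polynomial in the components $\Sig(\bX)_{0,t}^v$ with $|v|\le n$; since sums and products of real semimartingales are again semimartingales (integration by parts), $\Sig(\bX)_{0,\cdot}^{-1} \in \Se(\TT_1)$.

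Next I would introduce the closed martingale $M_t \coloneq \E_t(\Sig(\bX)_{0,T})$, well defined componentwise since $\Sig(\bX)^w_{0,T}\in\Lcal^1$ by hypothesis; passing to càdlàg versions (usual conditions), each $M^w$ is a uniformly integrable martingale, hence a semimartingale, with $M^\emptyset\equiv 1$, so $M\in\Se(\TT_1)$. Applying \Cref{lem:Chen} with $(s,t,u)=(0,t,T)$ gives $\Sig(\bX)_{0,T}=\Sig(\bX)_{0,t}\,\Sig(\bX)_{t,T}$, and, factoring the $\F_t$-measurable factor $\Sig(\bX)_{0,t}$ out of $\E_t(\cdot)$, one obtains $M_t=\Sig(\bX)_{0,t}\,\esig_t(T)$, that is $\esig_t(T)=\Sig(\bX)_{0,t}^{-1}M_t$. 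As a product of two $\TT_1$-valued semimartingales (each level being a finite sum of products of real semimartingale components), this yields $\esig(T)\in\Se(\TT_1)$. Finally $\kap(T)=\log\esig(T)$: the tensor logarithm $\log\colon\TT_1\to\tf$ is, at each level $n$, a universal finite polynomial in the components $\esig^v(T)$ with $|v|\le n$, so $\kap(T)\in\Se(\tf)$, with vanishing scalar part.

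I expect the only genuine obstacle to be the factoring of $\Sig(\bX)_{0,t}$ through $\E_t(\cdot)$: the ``take out what is known'' rule requires integrability of the individual products $\Sig(\bX)_{0,t}^{w_1}\Sig(\bX)_{t,T}^{w_2}$ that appear after expanding $(\Sig(\bX)_{0,t}\Sig(\bX)_{t,T})^w=\sum_{w_1w_2=w}\Sig(\bX)_{0,t}^{w_1}\Sig(\bX)_{t,T}^{w_2}$, which does not follow from the componentwise $\Lcal^1$-hypothesis alone, since products of $\Lcal^1$ variables need not be integrable. I would secure it either by reading the standing integrability assumption as supplying the required joint integrability, or by a level-by-level localisation/truncation argument, establishing the finite-level identities $M_t=\Sig(\bX)_{0,t}\,\esig_t(T)$ in $\TT^N$ and then passing to the inverse limit $N\to\infty$.
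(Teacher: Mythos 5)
Your argument is essentially the paper's: both use Chen's relation to write $\esig_t(T)=\Sig(\bX)_{0,t}^{-1}\,\E_t(\Sig(\bX)_{0,T})$, observe that each word component is then a finite polynomial in real-valued semimartingales (Itô's product rule), and treat $\log$ level by level as a polynomial map into $\tf$. The integrability point you flag for pulling the $\F_t$-measurable factor out of $\E_t$ is legitimate but is passed over silently in the paper, which takes the well-definedness of $\esig_t(T)$ as part of the standing hypothesis.
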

\begin{proof}
It follows from the relation \eqref{eq:chen_identity} that
\begin{align*}
\esig_t(T) = \E_t\left(\Sig(\bX)_{t,T}\right) = \E_t\left(\Sig(\bX)_{0,t}^{-1}\Sig(\bX)_{0,T}\right) = \Sig(\bX)_{0,t}^{-1}\E_t\left(\Sig(\bX)_{0,T}\right).
\end{align*}
Therefore projecting to the tensor components we have
\begin{align*}
\esig_t(T)^w = \sum_{w_1w_2 = w}(-1)^{|w_1|}S(\bX)^{w_1}_{0,t}\E_t\left(S(\bX)^{w_2}_{0,T}\right), \quad 0 \le t \le T, \quad w \in \W_d.
\end{align*}
Since $(\Sig(\bX)^w_{0, t})_{0 \le t \le T}$ and $(\E_t(\Sig(\bX)^w_{0,T})_{0 \le t \le T}$ are semimartingales (the latter in fact a martingale), it follows from Itô's product rule that $\esig^w(T)$ is also a semimartingale for all words $w\in\W_d$, hence $\esig(T)\in\Se(\TT_1)$.
Further recall that $\kap(T) = \log(\esig(T))$ and therefore it follows from the definition of the logarithm on $\TT_1$ that each component $\kap(T)^w$ with $w\in\W_d$ is a polynomial of $(\esig(T)^{v})_{v\in\W_d, |v|\le|w|}$. Hence it follows again by Itô's product rule that $\kap(T)\in\Se(\tf)$.
\end{proof}
It is of strong interest to have a more explicit necessary condition for the existence of the expected signature.
The following theorem below, the proof of which can be found in \Cref{sec:proof_bdg_signature}, yields such a criterion.
\begin{theorem}\label{thm:bdg_signature}
Let $q\in[1, \infty)$ and $N\in\None$, then there exist two constants $c,C>0$ depending only on $d$, $N$ and $q$, such that for all $\bX \in \HSehom^{q,N}$
\begin{equation*}
  c\homnqNs{\bX} \le \homnqNs{\Sig(\bX)_{0,\cdot}} \le C\homnqNs{\bX}.
\end{equation*}
In particular, if $\bX\in\HSe^{\infty-}(\tf)$ then $\Sig(\bX)_{0,\cdot}\in \HSe^{\infty-}(\TT_1)$ and the expected signature exists.
\end{theorem}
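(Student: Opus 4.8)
The plan is to exploit the triangular, graded structure of the signature equation \eqref{eq:marcus_signature_ode3}. Working in the truncated algebra $\TT^N$, I project the equation onto tensor level $n$. Since $\bX$ has no scalar component, the only occurrence of $\bX^{(n)}$ at level $n$ is the term $\int_{(0,t]}\Sig(\bX)^{(0)}_{0,u-}\,\dd\bX^{(n)}_u=\bX^{(n)}_t$, while every remaining contribution is a stochastic or Riemann--Stieltjes integral of strictly lower signature levels $\Sig(\bX)^{(a)}_{0,\cdot}$, $a<n$, against the level-$b$ pieces of $\bX$, of $\QV{\bX^{c}}$, and of the pure-jump correction $\exp(\Delta\bX)-1-\Delta\bX$, where the participating levels always sum to $n$. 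This exhibits $\Sig(\bX)^{(n)}_{0,\cdot}$ as a homogeneous (degree-$n$) polynomial functional of the lower levels and sets up an induction on $n$ from $1$ to $N$, with base case $\Sig(\bX)^{(1)}_{0,\cdot}=\bX^{(1)}$.

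For the upper bound I would estimate $\Abs{\Sig(\bX)^{(n)}_{0,\cdot}}_{\HSe^{qN/n}}$ term by term. Each It\^o integral $\int \Sig(\bX)^{(a)}_{0,u-}\,\dd\bX^{(b)}_u$ is controlled by Emery's inequality (\Cref{lem:emerys}) after inserting a near-optimal decomposition $\bX^{(b)}=\bM^{(b)}+\bA^{(b)}$, the martingale part via BDG and the finite-variation part via its total variation; the bracket terms are controlled by the Kunita--Watanabe inequality (\Cref{lem:cauchy_schwartz_tensor_bracket}); and the jump sums via the elementary bound $\sum_u\prod_i\abs{\Delta\bX^{(b_i)}_u}\le\prod_i\GQV{\bX^{(b_i)}}_T^{1/2}$. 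The decisive point is that the H\"older exponents match the grading: a level-$a$ factor controlled in $\HSe^{qN/a}$ (equivalently in $\Se^{qN/a}$, by \eqref{eq:Sesup_vs_HSe}) paired with a level-$b$ factor in $\HSe^{qN/b}$ produces a level-$(a+b)$ object in $\HSe^{qN/(a+b)}$, precisely because $\tfrac{a}{qN}+\tfrac{b}{qN}=\tfrac{a+b}{qN}$. Feeding in the inductive bounds $\Abs{\Sig(\bX)^{(a)}_{0,\cdot}}_{\HSe^{qN/a}}\lesssim\homnqNs{\bX}^{a}$ and $\Abs{\bX^{(b)}}_{\HSe^{qN/b}}\le\homnqNs{\bX}^{b}$, the finitely many products at level $n$ telescope to $\homnqNs{\bX}^{n}$, so $\Abs{\Sig(\bX)^{(n)}_{0,\cdot}}_{\HSe^{qN/n}}^{1/n}\lesssim\homnqNs{\bX}$; summing over $n$ gives $\homnqNs{\Sig(\bX)_{0,\cdot}}\le C\homnqNs{\bX}$, the constant $C=C(d,N,q)$ collecting the BDG/Emery/Kunita--Watanabe constants and the combinatorial factors. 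This homogeneity is consistent with the dilation equivariance $\Sig(\delta_\lambda\bX)=\delta_\lambda\Sig(\bX)$ together with $\homnqN{\delta_\lambda\bX}=\abs{\lambda}\,\homnqN{\bX}$.

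For the lower bound I would invert the recursion: solving the level-$n$ identity for $\bX^{(n)}$ expresses it as the same type of homogeneous polynomial functional of $\Sig(\bX)^{(a)}_{0,\cdot}$ ($a\le n$) and of the already-reconstructed lower levels $\bX^{(b)}$ ($b<n$). Running the identical Emery/Kunita--Watanabe bookkeeping along this inverse (anti-development) recursion yields $\Abs{\bX^{(n)}}_{\HSe^{qN/n}}^{1/n}\lesssim\homnqNs{\Sig(\bX)_{0,\cdot}}$, hence $c\,\homnqNs{\bX}\le\homnqNs{\Sig(\bX)_{0,\cdot}}$. Finally, for the \emph{in particular} statement: if $\bX\in\HSe^{\infty-}(\tf)$ then $\homnqNs{\bX}<\infty$ for every $q\in[1,\infty)$ and $N\in\None$, so the upper bound forces $\Sig(\bX)^{(n)}_{0,\cdot}\in\HSe^{p}$ for all $p<\infty$ and all $n$, i.e. $\Sig(\bX)_{0,\cdot}\in\HSe^{\infty-}(\TT_1)$; componentwise integrability then follows from \eqref{eq:Sesup_vs_HSe}, so $\E(\Sig(\bX)_{0,t})$ exists.

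The main obstacle I expect is the exponent-and-variation bookkeeping in the upper bound: organizing the infima over semimartingale decompositions so that they pass consistently through all the multilinear integral terms, and controlling the higher-order jump corrections $\exp(\Delta\bX)-1-\Delta\bX$ — which mix arbitrarily many jump levels $b_1+\dots+b_k=m$ — uniformly over the (finitely many, thanks to truncation) contributing compositions, all while keeping the constant dependent only on $d$, $N$ and $q$.
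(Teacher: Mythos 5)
Your proposal is correct and follows essentially the same route as the paper: project the Marcus equation \eqref{eq:marcus_signature_ode3} to each tensor level, induct on $n$ with Emery's inequality for the It\^o integrals, Kunita--Watanabe for the brackets and jump sums, and H\"older exponents matched to the grading, then invert the triangular structure for the lower bound. The only (cosmetic) difference is in the lower bound: rather than re-running the term-by-term estimates along the inverted recursion, the paper writes $\Sig(\bX)^{(n)}_{0,t}=\bX^{(n)}_{0,t}+\Sig(\bar\bX^{n-1})^{(n)}_{0,t}$ with $\bar\bX^{n-1}=(0,\bX^{(1)},\dots,\bX^{(n-1)},0,\dots,0)$ and simply recycles the already-proven upper bound applied to $\bar\bX^{n-1}$.
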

\begin{remark}
Let $\bX = (0, M, 0, \dotsc, 0)$ where $M \in \Ma(\Rd)$ is a martingale, then $$\homnqNs{\bX} = \Abss{M}_{\HSe^{qN}} = \Abss{\abs{\GQV{M}_T}^{1/2}}_{\Lcal^{qN}},$$
and we see that the above estimate implies that
\begin{align*}
\max_{n=1, \dotsc, N} \Abss{\Sig(\bX)^{(n)}_{0, \cdot}}_{\Se^{qN/n}}^{1/n} \le C \Abss{M}_{\HSe^{qN}}.
\end{align*}
This estimate is already known and follows from the Burkholder-Davis-Gundy inequality for enhanced martingales, which was first proved in the continuous case in \cite{friz2006burkholder} and for the general case in \cite{chevyrev2019canonical}.
\end{remark}
\begin{remark}
When $q>1$, the above estimate also holds true when the signature $\Sig(\bX)_{0,\cdot}$ is replaced by the conditional expected signature $\esig(T)$ or the conditional signature cumulant $\kap(T)$. This will be seen in the proof of \Cref{thm:main_with_jumps} below (more precisely in \Cref{claim:proof_L_estimate}).
\end{remark}

\subsection{Moments and cumulants} \label{sec:mc}
We quickly discuss the development of a symmetric algebra valued semimartingale, more precisely \(\tilde\bX  \in\Se(\Sy_0)\), in the group $\Sy_1$.
That is, we consider
\begin{equation}
    \mathrm d\tilde S=\tilde S\,{\circ\mathrm d}\tilde\bX.
    \label{eq:gSigSy}
\end{equation}

It is immediate (validity of chain rule) that the unique solution to this equation, at time $t \ge s$, started at $\tilde S_s = \tilde \xi \in \Sy_1$ is given by
\[
    \tilde S_{t}\coloneq \exp\left( \tilde \bX_t-\tilde\bX_s \right)\tilde\xi \in \Sy_1
\]
and we also write $\tilde S_{s,t} = \exp\left( \tilde \bX_t-\tilde\bX_s \right)$ for this solution
started at time $s$ from \(1\in\Sy_1\). The relation to signatures is as follows. Recall that the hat denotes the canonical projection from $\TT$ to $\Sy$.
\begin{proposition} \label{prop:XandXhat}
    (i) Let \(\bX,\mathbf Y\in\Se(\TT)\) and $\bZ = \int \bX \mathrm d\mathbf Y$ in It\^o sense. Then \(\hat\bX,\hat{\mathbf Y}\in\Se(\Sy)\) and, in the sense of indistinguishable processes,
    \begin{equation}
       \widehat \bZ = \int \hat\bX\,\mathrm d\hat{\mathbf Y}.
        \label{eq:intproj}
    \end{equation}
    (ii) Let \(\bX\in\Se(\TT_0)\). Then \(\widehat{\Sig(\bX)_{s,\cdot}}\) solves \eqref{eq:gSigSy} started at time $s$ from \(1\in\Sy_1\) and driven
    by \(\hat\bX\in\Se(\Sy_0)\).
    In particular \(\widehat{\Sig(\bX)}_{s,t}=\exp(\hat\bX_t-\hat\bX_s)\).
\end{proposition}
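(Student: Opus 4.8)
The plan is to exploit that the canonical projection $\bx\mapsto\hat\bx$ from $\TT$ to $\Sy$ is a \emph{graded} $\R$-linear algebra epimorphism: it preserves tensor degree, so on each level $k$ it restricts to a finite-dimensional linear map, and it satisfies $\widehat{\bx\by}=\hat\bx\,\hat\by$. Consequently it commutes with every algebraic and stochastic-analytic operation entering the definitions — finite linear combinations, concatenation products, continuous local martingale parts, quadratic (co)variations, It\^o integrals, and the exponential series. Part (i) is the substantial point; part (ii) then follows by projecting the Marcus equation \eqref{eq:marcus_signature_ode3} term by term and invoking uniqueness.

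For part (i): since each non-decreasing word $\hat v$ has only finitely many permutations, the component $\hat\bX^{\hat v}$ is the finite sum $\sum \bX^{w}$ over words $w$ with sorted rearrangement $\hat v$, hence a real semimartingale; thus $\hat\bX,\hat\bY\in\Se(\Sy)$. For the integral identity I would start from the componentwise definition \eqref{eq:def_tensor_ito_integral}, writing $\bZ^w=\sum_{w_1w_2=w}\int\bX^{w_1}_{u-}\,\dd\bY^{w_2}_u$. Applying $\hat{(\cdot)}$ and using bilinearity of the It\^o integral together with $\widehat{e_{w_1}e_{w_2}}=\hat e_{w_1}\hat e_{w_2}$, the claim reduces to the elementary combinatorial fact that the pairs $(w_1,w_2)$ whose concatenation $w_1w_2$ is a permutation of a fixed $\hat v$ are in bijection with the pairs $(a,b)$ whose merge $\hat a\sqcup\hat b$ equals $\hat v$. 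Equivalently, and perhaps more transparently, one may approximate $\int\bX\,\dd\bY$ by left-point Riemann sums $\sum_i\bX_{t_i}(\bY_{t_{i+1}}-\bY_{t_i})$, which converge in the ucp sense; since $\hat{(\cdot)}$ is continuous on each (finite-dimensional) level and multiplicative, it maps these to the Riemann sums of $\int\hat\bX\,\dd\hat\bY$ and passes through the limit, giving $\widehat\bZ=\int\hat\bX\,\dd\hat\bY$.

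For part (ii): apply $\hat{(\cdot)}$ to the integral equation \eqref{eq:marcus_signature_ode3} satisfied by $S=\Sig(\bX)_{s,\cdot}$. The It\^o-integral term is handled by part (i). Taking continuous local martingale parts is $\R$-linear, so $(\hat\bX)^c=\widehat{\bX^c}$, and bilinearity of quadratic covariation together with the homomorphism property give $\widehat{\langle\bX^c\rangle}=\langle\hat\bX^c\rangle$; likewise $\hat{(\cdot)}$ commutes with jumps, $\widehat{\Delta\bX_u}=\Delta\hat\bX_u$, and with the exponential series, $\widehat{\exp(\Delta\bX_u)}=\exp(\Delta\hat\bX_u)$. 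Collecting the terms shows that $\tilde S\coloneq\widehat{\Sig(\bX)_{s,\cdot}}$ satisfies exactly the $\Sy$-valued version of \eqref{eq:marcus_signature_ode3} driven by $\hat\bX\in\Se(\Sy_0)$ and started from $1\in\Sy_1$, i.e. the integral form of \eqref{eq:gSigSy}. The final assertion $\widehat{\Sig(\bX)}_{s,t}=\exp(\hat\bX_t-\hat\bX_s)$ then follows from the explicit and unique solution of \eqref{eq:gSigSy} recorded just above the statement.

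The main obstacle is entirely concentrated in part (i): reconciling the non-commutative multiplication carried by the integrand with the commutative product in $\Sy$. Everything else ($\R$-linearity handling martingale parts, jumps, and brackets; the homomorphism property handling $\exp$) is routine, and because the projection is degree-preserving all computations are finite on each tensor level, so no genuine convergence issue arises beyond justifying the interchange of $\hat{(\cdot)}$ with the It\^o integral — either through the combinatorial bijection above or through the ucp limit of Riemann sums.
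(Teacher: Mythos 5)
Your proposal is correct and follows essentially the same route as the paper: the paper also establishes \eqref{eq:intproj} by first verifying it for piecewise-constant integrands (where the homomorphism property applies to the finite Riemann sums) and then passing to the ucp limit, and it likewise treats part (ii) as an immediate consequence of projecting \eqref{eq:marcus_signature_ode3} term by term. Your write-up merely supplies more detail than the paper's (very terse) proof, in particular the componentwise regrouping argument and the explicit handling of brackets, jumps and the exponential in part (ii).
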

\begin{proof}
    (i) That the projections \(\hat\bX,\hat{\mathbf Y}\) define \(\Sy\)-valued semimartingales follows
    from the componentwise definition and the fact that the canonical projection is linear.
    In particular, the right-hand side of \cref{eq:intproj} is well defined.
    Now, \cref{eq:intproj} is true whenever \(\bX\) is piece-wise constant.
    By a limiting procedure, we immediately see that it is also true for general
    semimartingales.
    Part (ii) is then immediate.
\end{proof}

Assuming componentwise integrability, we then define {\it symmetric moments} and {\it cumulants} by
\begin{align*}
\tilde\fmu_t(T) & \coloneq \E_t\exp\left( \tilde\bX_T-\tilde\bX_t\right) = \sum_w \E_t \left( \exp\left( \tilde\bX_T-\tilde\bX_t\right)^w_{t,T}\right )\hat e_w
  \in \Sy_1,\\
\tilde\kap_t(T) & \coloneq \log \tilde\fmu_t(T) \in \Sy_0,  \quad 0 \le t \le T.
\end{align*}
If $\tilde \bX = \hat \bX$, for $ \bX \in \Se(\TT)$, with expected signature and signature cumulants $\fmu$ and $\kap$, it is then clear that the symmetric moments and cumulants of $\hat \bX$ are obtained by projection, $$\fmu \mapsto \hat \fmu, \quad \kap \mapsto \hat \kap.$$

\begin{example}    Let \( X\) be an \(\R^d\)-valued martingale in \(\mathscr H^{\infty-}\), and $\tilde\bX_t\coloneq\sum_{i=1}^dX^i_t\hat e_i$. Then
    \[
        \tilde \fmu_t(T)=\sum_{n=0}^\infty\frac{1}{n!}\E_t(X_T-X_t)^n,
            \]
   consists of the (time-\(t\) conditional) multivariate moments of $X_T-X_t \in \R^d$. And it readily follows, also noted in \cite[Example 3.3]{bonnier2019signature},  that
   $\tilde \kap_t (T) = \log \fmu_t(T)$ consists precisely of the multivariate cumulants of
   $X_T-X_t$. Note that the symmetric moments and cumulants of the scaled process $a X$, $a \in \R$,
   is precisely given by $\delta_a \fmu$ and $\delta_a \kap$ where the linear dilation map is
   defined by $\delta_a\colon \hat e_w \mapsto a^{|w|} \hat e_w$. The situation is similar for $a
   \cdot X$, $a \in \R^d$, but now with $\delta_a\colon\hat e_w \mapsto a^w \hat e_1^{|w|}$ with $a^w = a_1^{n_1} \cdots a_d^{n_d}$ where $n_i$ denotes the multiplicity of the letter $i \in \{1,\dots, d\}$ in the word $w$.
\end{example}
We next consider linear combinations, $\tilde\bX = a X + b \langle X \rangle $, for general pairs $a,b \in \R$, having already dealt with $b=0$. The special case $b = - a^2/2$, by scaling there is no loss in generality to take $(a,b) = (1,-1/2)$, yields a (at least formally) familiar exponential martingale identity.
\begin{example}
    Let \( X\) be an \(\R^d\)-valued martingale in \(\mathscr H^{\infty-}\), and define
    \[
        \tilde\bX_t\coloneq\sum_{i=1}^dX^i_t\hat e_i-\frac12\sum_{1\le i\le j\le d}\langle
        X^i,X^j\rangle_t\hat e_{ij}.
    \]
    In this case we have trivial symmetric cumulants, \(\tilde\kap_t(T)=0\) for all \(0\le t\le T\).
    Indeed, It\^o's formula shows that \(t\mapsto\exp\left(\tilde\bX_t\right)\) is an
    \(\Sy_1\)-valued martingale, so that
    \[
    \tilde\fmu_t(T)=\E_t\exp(\tilde\bX_T-\tilde\bX_t)=\exp(-\tilde\bX_t)\E_t\exp(\tilde\bX_T)=1.\qedhere \]
    \end{example}
While the symmetric cumulants of the last example carries no information, it suffices to work with
$$
        \tilde\bX = \sum_{i=1}^d a^i X  + \sum_{j,k=1}^d b_{jk} \langle X^j,X^k \rangle
$$
in which case $\fmu = \fmu (a,b), \kap = \kap(a,b)$ contains full information of the joint moments of $X$ and its quadratic variation process. A recursion of these was constructed as diamond expansion in \cite{friz2020cumulants}.

\section{Main Results}
\subsection{Functional equation for signature cumulants}\label{sec:master_equation}
Let $\bX\in\Se(\tf)$ defined one a filtered probability space $(\Omega, \F, (\F_t)_{0\le t\le
T<\infty},\PM)$ satisfying the usual conditions.
For all $\bx\in\tf$ (or $\tf^N$) define the following operators, with Bernoulli numbers $(B_k)_{k\ge 0} = (1, -\frac{1}{2}, \frac{1}{6}\dotsc)$,
\begin{equation}\label{eq:GHQ_def}
    \begin{split}
    G(\ad{\bx}) = \sum_{k = 0}^{\infty} \frac{(\ad{\bx})^k}{(k + 1) !}, \quad
    &Q(\ad{\bx}) = \sum_{m, n = 0}^{\infty}2\frac{(\ad{\bx})^n\odot (\ad{\bx})^m}{(n + 1) ! (m) ! (n + m + 2)},\\
    H(\ad{\bx}) &\coloneq \sum_{k=0}^{\infty}\frac{B_k}{k!}(\ad{\bx})^{k},
    \end{split}
\end{equation}
noting $G(z) = (\exp(z)-1)/z$, $H(z) = G^{-1}(z) = z/(\exp(z)-1)$.
Our main result is the following
\begin{theorem}\label{thm:main_with_jumps}
Let $\bX \in\HSe^{\infty-}(\tf)$, then the signature cumulant $\kap =\kap (T) = (\log\E_t(\Sig(\bX)_{t,T}))_{0 \le t \le T}$ is the unique solution (up to indistinguishably) of the following functional equation: for all $0 \le t \le T$
\begin{align}\label{eq:master_with_jumps}
\begin{split}
0 = \E_t\bigg\{& \bX_{t,T} + \frac{1}{2}\QV{\bX^{c}}_{t,T} +
    \int_{(t,T]}G(\ad{\kap_{u-}})(\dd\kap_u) + \frac{1}{2}\int_t^{T}Q(\ad{\kap_{u-}})(\dd\outerbracket{\kap^{c}}{\kap^{c}}_u) \\
&+\int_t^{T}(\Id\odot G(\ad{\kap_{u-}}))(\dd\outerbracket{\bX^{c}}{\kap^{c}}_u) \\
&+ \sum_{t < u \le T}\Big(\exp(\Delta \bX_u)\exp(\kap_u)\exp(-\kap_{u-}) - 1 -\Delta \bX_u - G(\ad{\kap_{u-}})(\Delta \kap_u) \Big)\bigg\}.
\end{split}
  \end{align}
Equivalently, $\kap =\kap (T) $ is the unique solution to
\begin{equation}\label{eq:master_with_jumps_h_form}
\begin{split}
\kap_t = \E_t\bigg\{& \int_{(t,T]}H(\ad{\kap_{u-}})(\mathrm d\bX_{u})
+ \frac{1}{2}\int_t^{T}H(\ad{\kap_{u-}})(\mathrm d\QV{\bX^{c}}_{u})\\
& + \frac{1}{2} \int_t^{T}H(\ad{\kap_{u-}}) \circ Q(\ad{\kap_{u-}})(\mathrm d\outerbracket{\kap^{c}}{\kap^{c}}_u)\\
&+\int_t^{T}H(\ad{\kap_{u-}})\circ(\Id\odot G(\ad{\kap_{u-}}))(\mathrm d\outerbracket{\bX^{c}}{\kap^{c}}_u)\\
&+ \sum_{t < u \le T}\bigg(H(\ad{\kap_{u-}})\Big(\exp(\Delta \bX_u)\exp(\kap_u)\exp(-\kap_{u-}) - 1 -\Delta \bX_u\Big) - \Delta \kap_u \bigg)\bigg\}.
\end{split}
\end{equation}

Furthermore, if $\bX\in\HSehom^{1,N}$ for some $N\in\None$, then the identities
\eqref{eq:master_with_jumps} and \eqref{eq:master_with_jumps_h_form} still hold true for the
truncated signature cumulant $\kap \coloneq (\log\E_t(\Sig(\bX^{(0,N)})_{t,T}))_{0\le t \le T}$.
\end{theorem}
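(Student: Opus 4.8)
The plan is to exploit the multiplicative structure of the signature. Writing $S_t \coloneq \Sig(\bX)_{0,t}$, the Chen relation (\Cref{lem:Chen}), in the rearranged form already obtained above, gives $\esig_t(T) = S_t^{-1}\E_t(S_T)$, so that
$M_t \coloneq S_t\exp(\kap_t) = \E_t(\Sig(\bX)_{0,T})$
is a genuine martingale, the integrability being supplied by $\bX\in\HSe^{\infty-}(\tf)$ together with \Cref{thm:bdg_signature}. The same input shows $\kap\in\Se(\tf)$, so Itô's formula applies. The entire identity \eqref{eq:master_with_jumps} will then be read off as the statement that the predictable finite-variation part of $M$ vanishes: since $M$ is a martingale and, by integrability, a special semimartingale, this drift is necessarily zero.

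The computation rests on two ingredients. First I would establish a stochastic (Itô--Marcus) expansion of the exponential map: for the $\tf$-valued semimartingale $\kap$ one has, up to a local martingale and the jump contribution, $d\exp(\kap_t)=\bigl[G(\ad{\kap_{t-}})(d\kap_t)+\tfrac12 Q(\ad{\kap_{t-}})(d\outerbracket{\kap^c}{\kap^c}_t)\bigr]\exp(\kap_{t-})$, where the right-trivialised first derivative of $\exp$ produces exactly $G(\ad{\kap_{t-}})=(e^{\ad{\kap_{t-}}}-1)/\ad{\kap_{t-}}$ and the second-order Itô term produces $Q$. Pinning down the precise shape of $Q$ --- by differentiating the series $\sum_k\kap^{\otimes k}/k!$ twice, integrating the two insertion sites over the simplex, and resumming in terms of the $\odot$-product --- is the combinatorial heart and the step I expect to be most delicate. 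Second I would combine this with the Marcus dynamics of $S$, namely $dS_t=S_{t-}(d\bX_t+\tfrac12 d\QV{\bX^c}_t)$ plus the pure-jump increments $S_{t-}(e^{\Delta\bX_t}-1-\Delta\bX_t)$, via the non-commutative product rule $d(SE)=(dS)E_{-}+S_{-}dE+d[S,E]$. Because every differential of $S$ carries $S_{t-}$ on the left and every differential of $E=\exp(\kap)$ carries $\exp(\kap_{t-})$ on the right, the whole expression factors as $dM_t=S_{t-}(dR_t)\exp(\kap_{t-})$ for an explicit $\tf$-valued semimartingale $R$; the cross bracket $d[S,E]$ is what contributes the term $(\Id\odot G(\ad{\kap_{t-}}))(d\outerbracket{\bX^c}{\kap^c}_t)$.

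Since $S_{t-}$ and $\exp(\kap_{t-})$ are invertible in $\TT_1$, vanishing of the drift of $M$ forces the drift of $R$ to vanish; integrating over $(t,T]$ and applying $\E_t$, which annihilates the martingale increments, yields \eqref{eq:master_with_jumps}, the jump sum arising from collecting $e^{\Delta\bX_u}\exp(\kap_u)\exp(-\kap_{u-})$ and subtracting the pieces already compensated by the continuous operators. Equation \eqref{eq:master_with_jumps_h_form} follows by applying the inverse operator $H(\ad{\kap_{u-}})=G(\ad{\kap_{u-}})^{-1}$ to solve for $d\kap_u$ and using the terminal value $\kap_T=0$. For uniqueness I would argue by the graded structure of \eqref{eq:master_with_jumps_h_form}: projecting by $\pi_n$ to $(\Rd)^{\otimes n}$, every term on the right-hand side depends only on $\kap^{(1)},\dots,\kap^{(n-1)}$ and the data $\bX$ --- the apparent $\kap^{(n)}$-contribution hidden in the jump sum cancels against the explicit $-\Delta\kap_u$ --- so $\kap^{(n)}$ is determined recursively, starting from $\kap^{(1)}_t=\E_t(\bX^{(1)}_{t,T})$. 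This recursion delivers both uniqueness and the computational scheme of \Cref{fig:cube.recur}.

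Finally, for $\bX\in\HSehom^{1,N}$ the identical computation is carried out in the truncated algebra $\TT/\ideal_N$: as $\ideal_N$ is a two-sided ideal, the Marcus equation, the exponential expansion and the product rule all descend, and $\homnqN{\bX}<\infty$ is exactly the integrability needed for the truncated $M$ to be a true martingale. The main obstacle throughout is the exact identification of the operator $Q$ and of the jump contribution; once the exponential Itô expansion is secured, the remainder is bookkeeping governed by the algebraic identity $H=G^{-1}$ and the graded recursion.
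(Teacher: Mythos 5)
Your strategy coincides with the paper's own proof in \Cref{sec:proof_main_theorem}: the Chen relation makes $M=S\exp(\kap)$ a true martingale, Itô's product rule combined with an Itô formula for the exponential map (whose first and second right-trivialised derivatives produce exactly $G$ and $Q$, cf.\ the symmetrisation step replacing $\widetilde Q$ by $Q$) factors $\dd M_u=S_{u-}\,\dd(\bL_u+\kap_u)\exp(\kap_{u-})$ for an explicit $\bL$, invertibility of $S_{u-}$ and $\exp(\kap_{u-})$ makes $\bL+\kap$ a local martingale, and the $H$-form, the uniqueness via the graded recursion (including your correct remark that the top-level contribution of the jump sum cancels against $-\Delta\kap_u$), and the truncated statement all follow as you describe.

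The genuine gap is the step ``applying $\E_t$, which annihilates the martingale increments.'' At that point $\bL+\kap$ is only a \emph{local} martingale, and a local martingale need not have conditionally vanishing increments; moreover several constituents of $\bL$ --- the $Q$-term in $\outerbracketsmall{\kap^c}{\kap^c}$, the covariation $\outerbracketsmall{\bX^c}{\kap^c}$, the jump sum, and the integral of $(G-\Id)(\ad\kap_{u-})$ against $\dd\kap$ --- are built from $\kap$ itself, whose integrability is \emph{not} supplied by \Cref{thm:bdg_signature} (that theorem controls the signature, not the cumulant). The paper closes this by the inductive estimate of \Cref{claim:proof_L_estimate}: one proves $\Abss{\bL^{(n)}}_{\HSe^{N/n}}\lesssim\rho^{n}_{\bX}$ while simultaneously extracting, level by level, a semimartingale decomposition $\kap^{(n)}=\kap^{(n)}_0+\bm^{(n)}+\ba^{(n)}$ with controlled $\HSe$- and $\Fv$-norms, using the Kunita--Watanabe and Emery inequalities and \Cref{lem:homn_by_product}; only then can one localize, pass to the limit in \eqref{eq:proof_stopped_master} by dominated convergence, and obtain \eqref{eq:master_with_jumps}. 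The same issue recurs for \eqref{eq:master_with_jumps_h_form}, where one must verify via \Cref{lem:emerys} that $\wt\bL+\kap$ is a true martingale. This bootstrap is the bulk of the paper's proof and is not mere bookkeeping; your outline should at least identify that the graded, strictly lower-order dependence of $\bL^{(n)}$ on $\kap^{(1)},\dotsc,\kap^{(n-1)}$ is what makes such an induction possible.
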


\begin{proof}
We postpone the proof for the fact that $\kap$ satisfies the equations \eqref{eq:master_with_jumps} and \eqref{eq:master_with_jumps_h_form} to section \Cref{sec:proof_main_theorem}.
The uniqueness part of the statement can be easily seen as follows:
Regarding equation \eqref{eq:master_with_jumps} we first note that it holds
\begin{align*}
\E_t \left\{ \int_{(t,T]} G(\ad \kap_{u-})(\dd \kap_u)\right\} = \E_t \left\{ \int_{(t,T]} (G(\ad \kap_{u-})-\Id)(\dd \kap_u) \right\} - \kap_t, \quad 0 \le t \le T,
\end{align*}
where we have used that $\kap_T \equiv 0$ (and the fact that the conditional expectation is well defined, which is shown in the first part of the proof).
Hence, after separating the identity from $G$, we can bring $\kap_t$ to the left-hand side in \eqref{eq:master_with_jumps}.
This identity is an equality of tensor series in $\tf$ and can be projected to yield an equality for each tensor level of the series.
As presented in more detail in the following subsection, we see that projecting the latter equation to tensor level say $n\in\None$, the right-hand side only depends on $\kap^{(k)}$ for $k < n$, hence giving an explicit representation $\kap^{(n)}$ in terms of $\bX$ and strictly lower tensor levels of $\kap$.
Therefore the equation \eqref{eq:master_with_jumps} characterizes $\kap$ up to a modification and then due to right-continuity up to indistinguishably.
The same argument applies to the equation \eqref{eq:master_with_jumps_h_form}, referring to the following subsections for details on the recursion.
\end{proof}

{\bf Diamond formulation:} The functional equations given in \Cref{thm:main_with_jumps} above, can be phrased in terms of the diamond product between $\tf$-valued semimartingales.
Writing $\mathbf{J}_t(T) = \sum_{t < u \le T} (\dots)$ for the last (jump) sum in
\eqref{eq:master_with_jumps}, this equation can be written, thanks to Lemma \ref{lem:dm}, which
applies just the same with outer diamonds, 
\begin{align*}
\frac{1}{2}  (\bX \diamond \bX)_t(T) & + \E_t \bigg\{ \bX_{t,T} +
\int_{(t,T]}G(\ad{\kap_{u-}})(\dd\kap_u) + 
\mathbf{J}_t(T)  \bigg\} \\
=
& \E_t \bigg\{  \frac{1}{2}\int_t^{T}Q(\ad{\kap_{u-}}) \dd  (\kap \blackdiamond \kap)_u(T)       
+\int_t^{T}(\Id\odot G(\ad{\kap_{u-}})) \dd   (\bX \blackdiamond \kap)_u(T)        
   \bigg\}
  \end{align*}
  and a similar form may be given for (\ref{eq:master_with_jumps_h_form}). While one may, or may not, prefer this equation to \eqref{eq:master_with_jumps}, diamonds become very natural in $d=1$ (or upon projection
  to the symmetric algebra, cf. \Cref{sub:diamond}). In this case $G = \Id$, $Q = \Id \odot \Id$ and with identities of the form
  $$
          \int_t^{T}(\Id\odot \Id) \dd   (\bX \blackdiamond \bY)_u(T)   = (\bX \diamond \bY)_u(T)|_{u=t}^T = - (\bX \diamond \bY)_t(T)
  $$
  some simple rearrangement, using bilinearity of the diamond product, gives 
\begin{equation} \label{equ:Diamond_dis1}
     \kap_t (T) = \E_t \{ \bX_{t,T} \} + \frac{1}{2} ((\bX + \kap) \diamond  (\bX + \kap))_t(T)  + \E_t \{ \mathbf{J}_t(T) \}.
\end{equation}
If we further impose martingality and continuity, we arrive at
$$
        \kap_t (T) =  \frac{1}{2} ((\bX + \kap) \diamond  (\bX + \kap))_t(T).
$$

\subsection{Recursive formulas for signature cumulants}\label{sec:reucsrion_with_jumps}

Theorem \ref{thm:main_with_jumps} allows for an iterative computation of signature cumulants, trivially started from
\begin{align*}
    \kap^{(1)}_t & = \fmu^{(1)}_t = \E_t\left(\bX^{(1)}_{t, T}\right).
\end{align*}
The second signature cumulant, obtained from Theorem  \ref{thm:main_with_jumps}, or from first principles, reads
\begin{align*}
\kap^{(2)}_t & = \E_t \bigg\{\bX^{(2)}_{t,T}
    + \frac{1}{2}\QV{\bX^{(1)c}}_{t,T} +\frac12 \int_{(t,T]}\Lie{\kap^{(1)}_{u-}}{\dd\kap^{(1)}_u} + \frac{1}{2}\QV{\kap^{(1)c}}_{t,T} + \CV{\bX^{(1)c}}{\kap^{(1)c}}_{t,T} \\
    &\hspace{3em}+ \sum_{t<u\le T}\bigg(\frac{1}{2}\left(\Delta \bX^{(1)}_u\right)^{2} + \Delta \bX^{(1)}_u\Delta\kap^{(1)}_u + \frac{1}{2}\left(\Delta\kap^{(1)}_u\right)^{2}  \bigg)\bigg\}
    \end{align*}
For instance, consider the special case with vanishing higher order components, $\bX^{(i)} \equiv 0$, for $i \ne 1$,
and $\bX = \bX^{(1)} \equiv M$, a $d$-dimensional continuous square-integrable
martingale. 
In this case, $\kap^{(1)}  = \fmu^{(1)} \equiv 0$
and from the very definition of the logarithm relating $\kap$ and $\fmu$, we have $\kap^{(2)} = \fmu^{(2)} - \frac12 \fmu^{(1)} \fmu^{(1)} = \fmu^{(2)}$.
It then follows from Stratonovich-Ito correction that
$$
         \kap^{(2)}_t =  \E_t \int_t^T (M_u - M_s) \circ \dd M_u = \frac12 \E_t \QV{M}_{t,T} =  \frac12 \E_t \QV{\bX^{(1)}}_{t,T}
$$
which is indeed a (very) special case of the general expression for $\kap^{(2)}$. We now treat general higher order signature cumulants.

\begin{corollary}\label{cor:reucsrion_with_jumps} Let $\bX\in\HSehom^{1,N}$ for some $N\in \mathbb{N}_{\ge 1}$, then we have
\begin{align*}
    \kap^{(1)}_t & = \E_t\left(\bX^{(1)}_{t, T}\right),
\end{align*}
for all $0 \le t \le T$ and for $n \in \{2, \dotsc, N\}$ we have recursively (the r.h.s. only depends on $\kap^{(j)},j<n$)
\begin{multline}\label{eq:recursion_jump}
\kap^{(n)}_t =  \E_t\left(\bX^{(n)}_{t,T }\right) + \frac12\sum_{k = 1}^{n-1}\E_t\left( \CV{\bX^{(k)c}}{\bX^{(n-k)c}}_{t,T}\right) \\
 +\sum_{|\ell|\ge 2, \; \|\ell\|=n}\E_t\Big(\Mag(\kap; \ell)_{t,T} + \Qua(\kap; \ell)_{t,T} + \Cov(\bX, \kap; \ell)_{t,T}  + \Jmp(\bX,\kap; \ell)_{t,T}\Big)
  \end{multline}
with $\ell = (l_1, \dotsc, l_k)$, $l_i \in \None$, $|\ell|\coloneq k\in\None$, $\|\ell\|\coloneq l_1 + \dotsb + l_k$ and
\begin{align*}
\Mag(\kap; l_1, \dotsc, l_{k})_{t,T} &= \frac{1}{k!} \int_{(t,T]} \ad{\kap^{(l_2)}_{u-}} \cdots \ad{\kap^{(l_{k})}_{u-}} (\dd \kap_u^{(l_{1})}) \\
\Qua(\kap; l_1, \dotsc, l_{k})_{t,T} &=\begin{multlined}[t] \frac{1}{k!}\sum_{m =
  2}^{k}\binom{k-1}{m-1}\\\times\int_t^{T}\Big(\ad{\kap^{(l_3)}_{u-}}\cdots
      \ad{\kap^{(l_{m})}_{u-}} \odot \ad{\kap^{(l_{m + 1})}_{u-}} \cdots
        \ad{\kap^{(l_k)}_{u-}} \Big)\Big(\dd \outerbracket{\kap^{(l_{1})c}}{
              \kap^{(l_{2})c}}_u\Big)\end{multlined}\\
\Cov(\bX, \kap; l_1, \dotsc, l_k)_{t,T} &= \frac{1}{(k-1)!} \int_t^T \left( \mathrm{Id} \odot \ad{\kap^{(l_3)}_{u-}} \cdots \ad{\kap^{(l_k)}_{u-}} \right) \left(\dd \outerbracket{\bX^{(l_1)c}}{\kap^{(l_{2})c}}_u\right) \\
\Jmp(\bX,\kap; l_1, \dotsc, l_k)_{t,T} &=\begin{multlined}[t] \sum_{t < u \le T}\sum_{1 \le m \le j \le
  k}\left((-1)^{k-j}\frac{\Delta\bX^{(l_1)}_u \cdots
    \Delta\bX^{(l_m)}_u\kap^{(l_{m+1})}_u\cdots\kap^{(l_j)}_u\kap^{(l_{j+1})}_{u-}\cdots
\kap^{(l_{k})}_{u-}}{m!(m-j)!(k-j)!}\right) \\
-\frac{1}{k!}\ad{\kap^{(l_2)}_{u-}}\cdots\ad{\kap^{(l_{k})}_{u-}}\left(\Delta
\kap^{(l_1)}_u\right).\end{multlined}
\end{align*}
\end{corollary}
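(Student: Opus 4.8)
The plan is to derive the recursion directly from \Cref{thm:main_with_jumps} by projecting the functional equation \eqref{eq:master_with_jumps} onto individual tensor levels. First I would isolate $\kap_t$ exactly as in the uniqueness argument: splitting $G=\Id+(G-\Id)$ and using $\int_{(t,T]}\dd\kap_u=\kap_T-\kap_t=-\kap_t$ (since $\kap_T=\log\E_T(\Sig(\bX)_{T,T})=0$) together with the $\F_t$-measurability of $\kap_t$, equation \eqref{eq:master_with_jumps} rearranges to
\[
\kap_t = \E_t\Big\{\bX_{t,T} + \tfrac12\QV{\bX^c}_{t,T} + \int_{(t,T]}(G(\ad{\kap_{u-}})-\Id)(\dd\kap_u) + \tfrac12\int_t^T Q(\ad{\kap_{u-}})(\dd\outerbracket{\kap^c}{\kap^c}_u) + \int_t^T(\Id\odot G(\ad{\kap_{u-}}))(\dd\outerbracket{\bX^c}{\kap^c}_u) + \bJ_t(T)\Big\},
\]
with $\bJ_t(T)$ the jump-sum of \eqref{eq:master_with_jumps}. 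The proof then consists of applying $\pi_n$ to both sides and reading off the contributions; the graded structure guarantees that the level-$n$ right-hand side involves only $\kap^{(l_i)}$ with $l_i\le n-1$, which gives both the recursive character and, for $n=1$, the trivial initialisation $\kap^{(1)}_t=\E_t(\bX^{(1)}_{t,T})$.

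Next I would treat the five groups of terms separately. The two ``seed-only'' contributions project to $\bX^{(n)}_{t,T}$ and $\tfrac12\sum_{k=1}^{n-1}\CV{\bX^{(k)c}}{\bX^{(n-k)c}}_{t,T}$. For the remaining three I use the expansion \eqref{eq:projection_adjoint_powerseries}–\eqref{eq:integral_adjoint_power} of $\sum_k a_k(\ad\bx)^k$ into ordered adjoints $\ad\kap^{(l_2)}\cdots\ad\kap^{(l_{k+1})}$ over multi-indices $\ell=(l_1,\dots,l_k)$ with $\|\ell\|=n$. The term $\int(G-\Id)(\ad{\kap})(\dd\kap)$ yields $\Mag$ with coefficient $\tfrac{1}{(a+1)!}=\tfrac1{k!}$, where $a=k-1$ is the number of adjoint factors and $|\ell|=k\ge2$. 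The term $\int(\Id\odot G(\ad{\kap}))(\dd\outerbracket{\bX^c}{\kap^c})$ yields $\Cov$, with $\Id$ fixing the $\bX^{(l_1)}$ slot and $G(\ad\kap)$ expanding the $\kap^{(l_2)}$ slot into $k-2$ adjoints, giving $\tfrac1{(k-1)!}$. The $Q$-term yields $\Qua$, where $(\ad\kap)^{a}\odot(\ad\kap)^{b}$ act on the two factors of $\outerbracket{\kap^c}{\kap^c}$; writing $a=m-2$ and $b=k-m$, the prefactor $\tfrac12\cdot\tfrac{2}{(a+1)!\,b!\,(a+b+2)}$ from \eqref{eq:GHQ_def} simplifies to $\tfrac{1}{k\,(m-1)!\,(k-m)!}=\tfrac1{k!}\binom{k-1}{m-1}$, matching $\Qua$ exactly.

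The main obstacle is the jump family $\Jmp$. Here I would expand the triple product $\exp(\Delta\bX_u)\exp(\kap_u)\exp(-\kap_{u-})$ by the defining power series, collecting $m$ factors of $\Delta\bX$, $j-m$ factors of $\kap_u$ and $k-j$ factors of $-\kap_{u-}$; the three factorials $\tfrac{1}{m!\,(j-m)!\,(k-j)!}$ and the sign $(-1)^{k-j}$ reproduce the triple sum $\sum_{1\le m\le j\le k}$ of $\Jmp$, while the subtracted $-G(\ad{\kap_{u-}})(\Delta\kap_u)$ supplies, via \eqref{eq:projection_adjoint_powerseries}, the $-\tfrac1{k!}\ad\kap^{(l_2)}_{u-}\cdots\ad\kap^{(l_k)}_{u-}(\Delta\kap^{(l_1)}_u)$ correction. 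The delicate point is the bookkeeping at $|\ell|=1$: the $k=1$ part of the triple product sums to $\Delta\bX^{(l_1)}_u+\Delta\kap^{(l_1)}_u$, the $k=1$ part of the adjoint correction is $\Delta\kap^{(l_1)}_u$, and the explicit $-1-\Delta\bX_u$ cancels the remainder, so that all first-level terms vanish and the summation legitimately restricts to $|\ell|\ge2$. I would verify this cancellation carefully; once it is in place the remaining coefficient matching is routine. Finally, the truncated statement follows because \Cref{thm:main_with_jumps} grants the same functional equation for the step-$N$ truncated cumulant whenever $\bX\in\HSehom^{1,N}$, so the identical projection argument applies for $n\le N$.
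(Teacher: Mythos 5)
Your proposal is correct and follows essentially the same route as the paper: separate the identity from $G$ to isolate $\kap_t$ (exactly as in the uniqueness argument for \Cref{thm:main_with_jumps}) and then apply $\pi_n$, using \eqref{eq:projection_adjoint_powerseries} to expand the adjoint power series over multi-indices. The paper only carries out the projection explicitly for the $\Mag$ term and leaves the rest, in particular the jump bookkeeping, as ``a simple combinatorial exercise'', so your coefficient checks for $\Qua$, $\Cov$ and the cancellation of the $|\ell|=1$ jump terms simply fill in details the paper omits.
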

\begin{proof}
Recall from \Cref{sec:tensor_semimartingales}, more specifically \eqref{eq:power_series_integral},
the definition of the stochastic Itô integral of a power series of adjoint operations with respect to a tensor valued semimartingale.
As in the proof of \Cref{thm:main_with_jumps} above, in \eqref{eq:master_with_jumps}, we can separate the identity from $G$ and bring the resulting $\kap_t$ to the left-hand side.
The recursion then follows from projecting the resulting form of the equation to tensors of level $n\in\{1, \dots, N\}$.
We demonstrate this projection for the first appearing term, which is the stochastic integral with respect to $\kap$.
It holds
\begin{align*}
\pi_n \E_t \left\{ \int_{(t,T]} (G(\ad \kap_{u-})-\Id)(\dd \kap_u) \right\}
&= \E_t \left\{\sum_{k=1}^{n} \frac{1}{k!}\sum_{\Abs{\ell} = n, \abs{\ell} = k}\int_{(t,T]} \ad{\kap^{(l_2)}_{u-}} \cdots \ad{\kap^{(l_{k})}_{u-}} (\dd \kap^{(l_1)}_u) \right\} \\
&=  \E_t \left\{\sum_{\Abs{\ell}=n} \mathrm{Mag}(\kap; \ell)_{t,T} \right\},
\end{align*}
for all $0 \le t \le T$, where in the first equality we have used the linearity to interchange $\pi_n$ with the expectation and the explicit form of the projection of a power series of adjoint operations given in \eqref{eq:projection_adjoint_powerseries}.
The projection of the remaining terms in equation \eqref{eq:master_with_jumps} follows analogously except for the jump part.
Regarding the latter, we note again that due to the linearity we can interchange the projection $\pi_n$ with the expectation and the sum over the interval $(t, T]$.
The remaining steps in order to arrive at the above form of the $\mathrm{Jmp}(\bX, \kap)$ term are a simple combinatorial exercise.
\end{proof}

We obtain another recursion for the signature cumulants from projecting the functional equation \eqref{eq:master_with_jumps_h_form}.
Note that, apart from the first two levels, it is far from trivial to see that the following recursion is equivalent to the recursion in \Cref{cor:reucsrion_with_jumps}.

\begin{corollary}\label{cor:recursion_h_form} Let $\bX\in\HSehom^{1,N}$ for some $N\in \mathbb{N}_{\ge 1}$, then we have
\begin{multline}\label{eq:recursion_h_form}
\kap^{(n)}_t = \E_t\left(\bX^{(n)}_{t,T}\right) +
\sum_{ |\ell|\ge 2,\; ||\ell||=n} \E_t\bigg( \mathrm{HMag}^{1}(\bX, \kap; \ell)_{t,T} + \frac{1}{2}\mathrm{HMag}^{2}(\bX, \kap; \ell)_{t,T} +  \mathrm{HQua}(\kap; \ell)_{t,T}\\
+ \mathrm{HCov}(\bX, \kap; \ell)_{t,T}+ \mathrm{HJmp}(\bX, \kap; \ell)_{t,T}\bigg)
\end{multline}
with $\ell = (l_1, \dotsc, l_k)$, $l_i \ge 1$, $|\ell|=k$, $||\ell||=l_1 + \dotsb + l_k$ and
\begin{align*}
\mathrm{HMag}^1(\bX, \kap; l_1, \dotsc, l_{k})_{t,T} &= \frac{B_{k-1}}{(k-1)!} \int_{(t,T]} \ad{\kap^{(l_2)}_{u-}} \cdots \ad{\kap^{(l_{k})}_{u-}} \left(\dd\bX^{(l_1)}_u\right) \\
\mathrm{HMag}^2(\bX, \kap; l_1, \dotsc, l_{k})_{t,T} &= \frac{B_{k-2}}{(k-2)!} \int_t^{T} \ad{\kap^{(l_3)}_{u-}} \cdots \ad{\kap^{(l_{k})}_{u-}} \left(\dd\CV{\bX^{(l_1)c}}{\bX^{(l_2)c}}_u\right) \\
\mathrm{HQua}(\kap; l_1, \dotsc, l_{k})_{t,T} &= \int_t^{T} \sum_{j=2}^{k}\frac{B_{k-j}}{(k-j)!}\ad{\kap^{(l_{j+1})}_{u-}}\cdots \ad{\kap^{(l_k)}_{u-}}\left(\dd\Qua(\kap; l_{1}, \dotsc, l_{j})_{u}\right)\\
\mathrm{HCov}(\bX, \kap; l_1, \dotsc, l_{k+1})_{t,T} &= \int_{t}^{T}\sum_{j=1}^{k}\frac{B_{k-j}}{{(k-j)}!}\ad{\kap^{(l_{j+1})}_{u-}}\cdots \ad{\kap^{(l_k)}_{u-}}\big(\dd\Cov(\bX, \kap; l_{1}, \dotsc, l_{j})_u\big) \\
\mathrm{HJmp}(\bX,\kap; l_1, \dotsc, l_k)_{t,T} &=\begin{multlined}[t]\sum_{t < u \le T}\sum_{1\le m
  \le j \le i \le k}(-1)^{k-j}\Bigg(\frac{B_{k-i}}{(k-i)!}\\
    \hspace{-3em}\times\ad{\kap^{(l_{i+1})}_{u-}}\cdots\ad{\kap^{(l_{k})}_{u-}}\Bigg(
\frac{\Delta\bX^{(l_1)}_u \cdots
\Delta\bX^{(l_m)}_u\kap^{(l_{m+1})}_u\cdots\kap^{(l_j)}_u\kap^{(l_{j+1})}_{u-}\cdots
\kap^{(l_{i})}_{u-}}{m!(m-j)!(k-j)!}\Bigg)\Bigg).\end{multlined}
\end{align*}
\end{corollary}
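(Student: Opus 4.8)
The plan is to project the functional equation \eqref{eq:master_with_jumps_h_form} onto tensor level $n$ and read off the contribution of each of its five terms, exactly as in the proof of \Cref{cor:reucsrion_with_jumps}, but now starting from the equation already solved for $\kap_t$, so that no separation of the identity operator from $G$ is needed. Throughout I would use the explicit projection formula \eqref{eq:projection_adjoint_powerseries} for a power series of adjoint operators, together with the left--right expansion \eqref{eq:integral_adjoint_power}, and the integrability established inside the proof of \Cref{thm:main_with_jumps} (in particular that $\bX\in\HSehom^{1,N}$ forces $\kap$ and all the bracket processes into the relevant $\HSe$-spaces), which justifies interchanging $\pi_n$ and $\E_t$ with the stochastic integrals and the jump sum.

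First I would treat the two genuinely new ``Magnus'' terms. Applying \eqref{eq:projection_adjoint_powerseries} with $a_k=B_k/k!$ to $\int_{(t,T]}H(\ad{\kap_{u-}})(\dd\bX_u)$ isolates the $k=0$ contribution $\bX^{(n)}_{t,T}$ and reindexes the remaining adjoint chains of length $|\ell|=k\ge 2$ with weight $B_{k-1}/(k-1)!$; this is exactly $\mathrm{HMag}^1$, and the restriction $|\ell|\ge 2$ in \eqref{eq:recursion_h_form} records that the length-one index has been peeled off as the leading $\E_t(\bX^{(n)}_{t,T})$. The same computation applied to $\tfrac12\int H(\ad{\kap_{u-}})(\dd\QV{\bX^c}_u)$, after expanding $\QV{\bX^c}$ into its constituent levels $\CV{\bX^{(l_1)c}}{\bX^{(l_2)c}}$, yields $\tfrac12\mathrm{HMag}^2$ with Bernoulli weight $B_{k-2}/(k-2)!$, since two of the $k$ slots are consumed by the inner bracket.

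The two composition terms are handled by peeling $H$ off the outside. The key observation, already extracted in the proof of \Cref{cor:reucsrion_with_jumps}, is that $\Qua(\kap;\cdot)$ and $\Cov(\bX,\kap;\cdot)$ are precisely the level-wise projections of the integrands $\tfrac12 Q(\ad{\kap_{u-}})(\dd\outerbracket{\kap^c}{\kap^c}_u)$ and $(\Id\odot G(\ad{\kap_{u-}}))(\dd\outerbracket{\bX^c}{\kap^c}_u)$. Writing the integrand of each composed term as $H(\ad{\kap_{u-}})$ applied to the semimartingale whose increments are these inner expressions, a further application of \eqref{eq:projection_adjoint_powerseries} with weights $B_{k-j}/(k-j)!$ wraps the additional adjoints $\ad{\kap^{(l_{j+1})}_{u-}}\cdots\ad{\kap^{(l_k)}_{u-}}$ around the inner chain of length $j$, producing the recursive definitions of $\mathrm{HQua}$ and $\mathrm{HCov}$ verbatim; the factor $\tfrac12$ of the $Q$-term is already carried by $\Qua$, which is why no extra $\tfrac12$ appears in front of $\mathrm{HQua}$ in \eqref{eq:recursion_h_form}.

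The main obstacle is the jump term. Here one must expand the triple product $\exp(\Delta\bX_u)\exp(\kap_u)\exp(-\kap_{u-})-1-\Delta\bX_u$ into a sum of ordered monomials in $\Delta\bX^{(\cdot)}_u$, $\kap^{(\cdot)}_u$ and $\kap^{(\cdot)}_{u-}$, with multinomial weights and signs inherited from $\exp(-\kap_{u-})$ and from the adjoint expansion \eqref{eq:integral_adjoint_power}, then apply $H(\ad{\kap_{u-}})$ (contributing the outer adjoints of length $k-i$ and the Bernoulli weight $B_{k-i}/(k-i)!$), and finally subtract the standalone $\Delta\kap_u$. The crucial point, which I would first verify at $n=1$ as a sanity check and then in general, is that the lowest-order (``linear in $\Delta\kap_u$'') part of $H(\ad{\kap_{u-}})(\cdots)$ cancels the subtracted $\Delta\kap_u$ exactly: already at level one, the expansion gives $\Delta\bX^{(1)}_u+\Delta\kap^{(1)}_u-\Delta\bX^{(1)}_u=\Delta\kap^{(1)}_u$, which $H$ leaves unchanged and the trailing $-\Delta\kap^{(1)}_u$ annihilates. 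This cancellation is what confines the surviving contributions to indices of length $|\ell|\ge 2$ and produces the stated form of $\mathrm{HJmp}$. The remaining work is the bookkeeping of the three factorial groups and the signs, which is routine but intricate; all terms then assemble into \eqref{eq:recursion_h_form}.
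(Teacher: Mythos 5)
Your proposal is correct and follows exactly the route of the paper, whose proof of this corollary consists of projecting \eqref{eq:master_with_jumps_h_form} level by level via \eqref{eq:projection_adjoint_powerseries}, in analogy with the derivation of \Cref{cor:reucsrion_with_jumps}; your additional observations (no need to separate $\Id$ from $G$ here, the $B_0$-term producing $\E_t(\bX^{(n)}_{t,T})$, the cancellation of the linear-in-$\Delta\kap_u$ part in the jump sum) are all accurate refinements of the same argument.
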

\begin{proof}
The recursion follows from projecting the equation \eqref{eq:master_with_jumps_h_form} to each tensor level, analogously to the way that the recursion of \Cref{cor:reucsrion_with_jumps} follows from \eqref{eq:master_with_jumps} (see the proof of \Cref{cor:reucsrion_with_jumps}).
\end{proof}

{\bf Diamonds. } All recursions here can be rewritten in terms of diamonds.
In a first step, by definition the second term in \Cref{cor:reucsrion_with_jumps} can be rewritten
as
\[
  \frac12\sum_{k=1}^n(\bX^{(k)}\diamond\bX^{(n-k)})_t(T).
\]
Thanks to \Cref{lem:dm} we may also write
\begin{equation*}
  \begin{split}
    \MoveEqLeft\E_t\Qua(\kap;\ell)_{t,T}\\
    &=-\E_t\Biggl\{\frac1{k!}\sum_{m=2}^k\binom{k-1}{m-1}\int_t^T\left(
      \ad\kap_{u-}^{(\ell_3)}\dotsb\ad\kap_{u-}^{(\ell_m)}\odot\ad\kap_{u-}^{(\ell_{m+1})}\dotsm\ad\kap_{u-}^{(\ell_k)}
  \right)\left( \dd(\kap^{(\ell_1)}\blackdiamond\kap^{(\ell_2)})_u(T) \right)\Biggr\}.
\end{split}
\end{equation*}
Similarly,
\[
  \E_t\Cov(\bX,\kap;\ell)_{t,T}=-\E_t\left\{
  \frac1{(k-1)!}\int_t^T\left( \Id\odot\ad\kap_{u-}^{(\ell_3)}\dotsm\ad\kap_{u-}^{(\ell_k)}
\right)\left( \dd(\bX^{(\ell_1)}\blackdiamond\kap^{(\ell_2)})_u(T) \right) \right\}.
\]
Inserting these expressions into \Cref{eq:recursion_h_form} we may obtain a ``diamond'' form of the
recursions in H form.

When \(d=1\) (or in the projection onto the symmetric algebra, c.f. \Cref{sub:diamond}) the
recursions take a particularly simple form, since \(\ad\bx\equiv 0\) for all \(\bx\in\tf\), for $d=1$ a commutative algebra.
\Cref{eq:recursion_jump} then becomes
$$
  \kap^{(n)}_t (T) =
  \E_t\left(\bX_{t,T}^{(n)}\right)+\frac12\sum_{k=1}^{n-1} ( (\bX^{(k)} + \kap^{(k)}  ) \diamond (\bX^{(n-k)} + \kap^{(n-k)}))_t(T)
  + \E_t\left( \mathbf{J}^{(n)}_t (T) \right)
  $$
where $\mathbf{J}^{(n)}_t(T) = \sum_{|\ell|\ge 2, \; \|\ell\|=n}  \Jmp(X,\kap; \ell)_{t,T}$ contains
the $n$-th tensor component of the jump contribution. The above diamond recursion can also be obtained
by projecting the functional relation (\ref{equ:Diamond_dis1}) to the $n$-th tensor level. We shall revisit
this in a multivariate setting and comment on related works in Section \ref{sub:diamond}.

\section{Two special cases}
and application of the Lie bracket, coming from the \(\ad\) operator.
\subsection{Variations on Hausdorff, Magnus and Baker--Campbell--Hausdorff}
\label{sec:hausmag}
We now consider a deterministic driver $\bX$ of finite variation. This includes the case when $\bX$ is absolutely continuous, in which case we recover, up to a harmless time reversal, $t \leftrightarrow T-t$ , Hausdorff's ODE and the classical Magnus expansion for the solution to a linear ODE in a Lie group \cite{hausdorff1906symbolische, magnus1954exponential,chen1954iterated,iserles1999solution}. Our extension with regard to discontinuities seems to be new and somewhat unifies Hausdorff's equation with multivariate Baker--Campbell--Hausdorff integral formulas.

\begin{theorem} \label{thm:magnus_expansion}
Let $\bX \in \mathscr{V} (\tf)$, and more specifically $\bX\colon [0,T] \to \tf$ deterministic, \cadlag{}
of bounded variation. The log-signature $\Omega_t=\Omega_{t}(T)\coloneq  \log(\Sig(\bX)_{t,T})$ satisfies the integral equation
\begin{align}
  \Omega_{t}(T) &=
    \int_t^{T}H(\ad{\Omega_{u-}})(\dd\bX^c_u)
  + \sum_{t<u\le T} \int_0^1\Psi(\exp(\ad \theta \Delta \bX_u)\circ\exp(\ad\Omega_u))(\Delta
  \bX_u)\,\mathrm d\theta,   \label{equ:HauJum}
  \end{align}
with \(\Psi(z)\coloneq H(\log z)={\log z}/{(z-1)}\) as in the introduction. The sum in (\ref{equ:HauJum}) is absolutely convergent, over (at most countably many) jump times of $\bX$, vanishes when $\bX \equiv \bX^c$, in which case \cref{equ:BCH} reduces to Hausdorff's ODE.

(i) The accompanying {\em Jump Magnus expansion} becomes $\Omega^{(1)}_{t}(T) = \bX^{(1)}_{t,T}$ followed by
$$
          \Omega^{(n)}_{t}(T) = \bX^{(n)}_{t,T} + \sum_{|\ell|\ge 2,
          \Vert\ell\Vert=n}\left(\mathrm{HMag}^{1}(\bX, \Omega; \ell)_{t,T} + \mathrm{HJmp}(\bX,
          \Omega; \ell)_{t,T}\right)
$$
where the right-hand side only depends on $\Omega^{(k)}, k<n$.

(ii) If $\bX \in \mathscr{V} (V)$ for some linear subspace $V \subset \T_0 = T_0\dparl\R^d\dparr$, it follows that, for all
\(t\in[0,T]\),
$$\Omega_{t}(T) \in \mathcal{L} \coloneq \mathrm{Lie}\dparl V\dparr \subset \tf, \qquad  \Sig(\bX)_{t,T} \in \exp (\mathcal{L}) \subset  \TT_1,
$$
we say that $\Omega_{t}(T)$ is Lie in $V$. In case $V=\R^d$ one speaks of (free) Lie series, cf.  \cite[Def. 6.2]{LyonsICM}.
\end{theorem}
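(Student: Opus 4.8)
The plan is to obtain \eqref{equ:HauJum} as the deterministic, finite-variation specialization of the functional equation \eqref{eq:master_with_jumps_h_form}, and then read off parts (i) and (ii) from it. First I would observe that a deterministic \cadlag{} path of bounded variation satisfies $\Abs{\bX^w}_{\HSe^q} = \int_0^T|\dd\bX^w|<\infty$ for every $q$ and every word $w$, so $\bX\in\HSe^{\infty-}(\tf)$ and \Cref{thm:main_with_jumps} applies. Since $\bX$ is deterministic, $\E_t$ is the identity, the signature cumulant equals $\Omega_t=\log\Sig(\bX)_{t,T}$, and all continuous-local-martingale brackets vanish, $\QV{\bX^c}=\outerbracket{\kap^c}{\kap^c}=\outerbracket{\bX^c}{\kap^c}=0$ (here $\bX^c$ in \eqref{equ:HauJum} is reinterpreted as the continuous part of the bounded-variation path). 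Equation \eqref{eq:master_with_jumps_h_form} then collapses to the first integral plus the jump sum.

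For the jump simplification I would use the Marcus relation behind \eqref{eq:marcus_signature_ode3}, namely $\Sig(\bX)_{u-,T}=\exp(\Delta\bX_u)\Sig(\bX)_{u,T}$, which follows from \Cref{lem:Chen} together with the jump behaviour $S_u=e^{\Delta\bX_u}S_{u-}$; equivalently $\exp(\Omega_{u-})=\exp(\Delta\bX_u)\exp(\Omega_u)$. Hence $\exp(\Delta\bX_u)\exp(\Omega_u)\exp(-\Omega_{u-})=1$, so each summand of the jump term of \eqref{eq:master_with_jumps_h_form} reduces to $-H(\ad{\Omega_{u-}})(\Delta\bX_u)-\Delta\Omega_u$. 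Splitting the first integral $\int_{(t,T]}H(\ad{\Omega_{u-}})(\dd\bX_u)$ into its continuous part $\int_t^T H(\ad{\Omega_{u-}})(\dd\bX^c_u)$ and its jump contribution $\sum_u H(\ad{\Omega_{u-}})(\Delta\bX_u)$, the latter cancels the $-H(\ad{\Omega_{u-}})(\Delta\bX_u)$ terms, leaving $\int_t^T H(\ad{\Omega_{u-}})(\dd\bX^c_u)-\sum_{t<u\le T}\Delta\Omega_u$. Finally $-\Delta\Omega_u=\Omega_{u-}-\Omega_u=\operatorname{BCH}(\Delta\bX_u,\Omega_u)-\Omega_u$, which by the integral Baker--Campbell--Hausdorff formula \eqref{equ:BCH} equals $\int_0^1\Psi(\exp(\ad\theta\Delta\bX_u)\circ\exp(\ad\Omega_u))(\Delta\bX_u)\,\dd\theta$, giving \eqref{equ:HauJum}. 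Absolute convergence of the sum is then clear, since $\sum_u|\Delta\bX_u|<\infty$ by bounded variation and each summand is $O(|\Delta\bX_u|)$ uniformly on the (compact) range of $\Omega$. Part (i) follows by projecting this specialized equation to tensor level $n$, exactly as \Cref{cor:recursion_h_form} is obtained from \eqref{eq:master_with_jumps_h_form}: the terms carrying continuous brackets ($\mathrm{HMag}^2$, $\mathrm{HQua}$, $\mathrm{HCov}$) vanish, and what survives is $\bX^{(n)}_{t,T}$ together with the $\mathrm{HMag}^1$ and $\mathrm{HJmp}$ contributions with $\kap$ replaced by $\Omega$; the triangular grading established in \Cref{thm:main_with_jumps} guarantees the right-hand side depends only on $\Omega^{(k)}$, $k<n$.

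For part (ii) I would argue by induction on the tensor level, working with the \emph{unprojected} equation \eqref{equ:HauJum}. Recall that $\mathcal{L}=\mathrm{Lie}\dparl V\dparr$ is a closed Lie subalgebra of $\tf$, and that $\bX$, hence its continuous part $\bX^c$ and every jump $\Delta\bX_u$, is $V$-valued, thus $\mathcal{L}$-valued. The key point is that the right-hand map of \eqref{equ:HauJum} sends $\mathcal{L}$-valued paths to $\mathcal{L}$-valued paths: if $\Omega_{u-}\in\mathcal{L}$, then $H(\ad{\Omega_{u-}})(\dd\bX^c_u)=\sum_k\frac{B_k}{k!}(\ad{\Omega_{u-}})^k(\dd\bX^c_u)$ is a limit of iterated brackets of elements of $\mathcal{L}$, hence lies in $\mathcal{L}$ (by closedness and the Lie-subalgebra property), and the Riemann--Stieltjes integral, being a level-wise limit of $\mathcal{L}$-valued Riemann sums, remains in $\mathcal{L}$; likewise $\Psi(\exp(\ad\theta\Delta\bX_u)\circ\exp(\ad\Omega_u))(\Delta\bX_u)$ is a convergent series of iterated brackets of $\mathcal{L}$-elements, and the $\theta$-integral and jump sum stay in $\mathcal{L}$. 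Since the level-$n$ component of the right-hand side depends only on $\Omega^{(k)}$, $k<n$, Picard iteration started from $\Omega\equiv 0\in\mathcal{L}$ yields iterates that all lie in $\mathcal{L}$ and stabilize level-by-level to the unique solution $\Omega$; closedness of $\mathcal{L}$ gives $\Omega_t(T)\in\mathcal{L}$, and applying $\exp$ gives $\Sig(\bX)_{t,T}\in\exp(\mathcal{L})$.

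I expect the main obstacle to be part (ii). Because a general $V$ need not be graded, the level-projected differentials $\bX^{(l_1)}$ appearing in the recursion of (i) need not individually lie in $\mathcal{L}$, so the induction cannot be run on the projected recursion directly. The fix, as above, is to argue with the unprojected form \eqref{equ:HauJum}, where $\bX$ enters only through $V$-valued differentials and jumps, and to combine the closedness of $\mathcal{L}$ with the triangular grading to break the apparent circularity (the right-hand side formally maps $\mathcal{L}$ into $\mathcal{L}$, but only the grading makes the fixed point well defined and reachable by iteration).
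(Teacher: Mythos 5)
Your proof follows the same route as the paper: specialize \Cref{thm:main_with_jumps} to the deterministic finite-variation setting, use $\exp(\Omega_{u-})=\exp(\Delta\bX_u)\exp(\Omega_u)$ to collapse each jump summand to $-H(\ad{\Omega_{u-}})(\Delta\bX_u)-\Delta\Omega_u$, cancel the first term against the jump part of the Stieltjes integral $\int H(\ad{\Omega_{u-}})(\dd\bX_u)$, and rewrite $-\Delta\Omega_u$ via the integral Baker--Campbell--Hausdorff identity. Your supplementary arguments for the absolute convergence of the jump sum and for parts (i) and (ii) --- which the paper's proof leaves implicit --- are sound.
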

\begin{proof} Since we are in a purely deterministic setting the signature cumulant coincides with
  the log-signature $\kap_t(T) = \Omega_{t}(T)$ and Theorem \ref{thm:main_with_jumps} applies without any expectation and angle brackets.

Using $\Delta \Omega_u = \Omega_{u} - \Omega_{u-} = \Omega_u - \log (\mathrm e^{\Delta \bX_u}\mathrm
e^{\Omega_u})$ we see that
\begin{align*}
  \Omega_{t}(T) &=
  \int_t^{T}H(\ad{\Omega_{u-}})(\dd\bX^c_u)
  - \sum_{t<u\le T}
        \Delta\Omega_u
         \\
  &= \int_t^{T}H(\ad{\Omega_{u-}})(\dd\bX^c_u)
  - \sum_{t<u\le T}
  \left(
        \Omega_u - \operatorname{BCH}(\Delta \bX_u,\Omega_u)
  \right) \\
  & = \int_t^{T}H(\ad{\Omega_{u-}})(\dd\bX^c_u)
  + \sum_{t<u\le T} \int_0^1\Psi(\exp(\theta \ad\Delta \bX_u)\circ\exp(\ad\Omega_u))(\Delta\bX_u)\,\mathrm
  d\theta,
  \end{align*}
  where we used the identity
 \begin{equation}
 \operatorname{BCH}(\bx_1,\bx_2) - \bx_2 =  \log\bigl(\exp(\bx_1)\exp(\bx_2) \bigr) - \bx_2 =
              \int_0^1\Psi(\exp(\theta\ad\bx_1)\circ\exp(\ad\bx_2))(\bx_1)\,\mathrm d\theta.\qedhere  \label{equ:BCHSec5}
            \end{equation}
\end{proof}

\begin{remark}[Baker--Campbell--Hausdorff] The identity \eqref{equ:BCHSec5} is well-known, but also easy to obtain {\em en passant}, thereby rendering the above proof self-contained.  We treat directly the $n$-fold case.
Given \(\bx_1,\dotsc,\bx_n\in\tf\) one defines a continuous {\em piecewise affine linear} path $(\bX_t: 0 \le t \le n)$ with $\bX_i - \bX_{i-1} = \bx_i$.
Then $\Sig ( \bX |_{[i-1,i]})=\Sig ( \bX)_{i-1,i}=\exp(\bx_i)$ and by Lemma \ref{lem:Chen} have
\(\Sig(\bX)_{0,n}=\exp(\bx_1)\dotsm\exp(\bx_n)\) and therefore
\[
    \Omega_{0}=\log\left( \exp(\bx_1)\dotsm\exp(\bx_n)
    \right)\eqcolon\operatorname{BCH}(\bx_1,\dotsc,\bx_n).
\]
A computation based on Theorem \ref{thm:magnus_expansion}, but now applied without jumps,
reveals the general form
\begin{align*}
  \operatorname{BCH}(\bx_1,\dotsc,\bx_n) &=  \bx_n + \sum_{k=1}^{n-1}\int_0^1\Psi(\exp(\theta\ad\bx_k)\circ\exp(\ad\bx_{k+1})\circ\dotsm\circ\exp(\ad\bx_n))(\bx_k)\,\mathrm
  d\theta  \\
  &=\begin{multlined}[t] \sum_i\bx_i+\frac12\sum_{i<j}[\bx_i,\bx_j]+\frac{1}{12}\sum_{i<j}([\bx_i,[\bx_i,\bx_j]]+[\bx_j,[\bx_j,\bx_i]])\\
  -\frac16\sum_{i<j<k}[\bx_j,[\bx_i,\bx_k]]-\frac1{24}\sum_{i<j}[\bx_i,[\bx_j,[\bx_i,\bx_j]]]\dotsb\end{multlined}
\end{align*}
The flexibility of our Theorem \ref{thm:magnus_expansion} is then nicely illustrated by the fact that this $n$-fold BCH formula is an immediate consequence of  \eqref{equ:HauJum}, applied to a {\em piecewise constant} \cadlag{}  path $(\bX_t: 0 \le t \le n)$ with $\bX_\cdot - \bX_{i-1} \equiv \bx_i$ on $[i-1,i)$.
\end{remark}

\subsection{Diamond relations for multivariate cumulants}
\label{sub:diamond}
As in Section \ref{sec:trunc_tensor} we write $\Sy$ for the symmetric algebra over $\R^d$, and $\Sy_0,\Sy_1$ for those elements with scalar component $0,1$, respectively. Recall the exponential map $\exp: \Sy_0\to \Sy_1$ with global defined inverse $\log$.
Following Definition \ref{def:diamondSym} the diamond product for $\Sy_0$-valued semimartingales $\tilde \bX, \tilde \bY$ is another $\Sy_0$-valued semimartingale given by
$$
         (\tilde \bX \diamond \tilde \bY)_t(T) = \E_t \big( \langle \tilde \bX^c, \tilde \bY^c \rangle_{t,T} \big)
         =
          \sum  ( \E_t \langle \tilde \bX^{w_1}, \tilde \bY^{w_2} \rangle_{t,T})  \hat e_{w_1} \hat e_{w_2},
$$
with summation over all $w_1,w_2 \in \widehat \W_d$, provided all brackets are integrable. This trivially adapts to
$\Sy^N$-valued semimartingales, $N\in \mathbb{N}_{\ge 1}$, in which case all words have length less equal $N$, the summation is restricted accordingly to $|w_1+|w_2| \le N$.

\begin{theorem}\label{thm:main_with_jumps_sym}
(i) Let $\Xi= (0, \Xi^{(1)},\Xi^{(2)},...)$ be an $\F_T$-measurable random variable with values in $\Sy_0 (\R^d)$, componentwise in $\mathcal{L}^{\infty-}$.
Then
\[
  \KK_t(T) \coloneq  \log \E_t \exp (\Xi)
\]
satisfy the following functional equation, for all $0 \le t \le T$,
\begin{equation}\label{eq:main_with_jumps_sym}
   \KK_t(T)  = \E_t \Xi + \frac{1}{2} (\KK \diamond \KK)_t(T) +  \mathbb{J}_t(T)
\end{equation}
with jump component,
 \begin{equation*}
   \mathbb{J}_t(T) = \E_t \left( \sum_{t < u \le T} \left( e^{\Delta  \KK_u} - 1 - \Delta \KK_u \right)\right)
   =\E_t\left( \sum_{t < u \le T} \left( \frac{1}{2!}(\Delta  \KK_u)^2 + \frac{1}{3!} (\Delta\KK_u)^3 + \dotsb \right)\right).
\end{equation*}
Furthermore, if $N\in \mathbb{N}_{\ge 1}$, and $\Xi=(\Xi^{(1)},...,\Xi^{(N)})$ is $\F_T$-measurable with graded integrability condition
       \begin{equation} \label{ref:Ncond}
\Abss{\Xi^{(n)}}_{\Lcal^{N/n}} < \infty, \qquad n=1,...,N,
\end{equation}
then the identity (\ref{eq:main_with_jumps_sym})
holds for the
truncated signature cumulant $\KK^{(0,N)} \coloneq (\log\E_t(\Sig(\bX^{(0,N)})_{t,T}))_{0\le t \le T}$
with values in $\Sy^{(N)}_0 (\R^d)$.
\end{theorem}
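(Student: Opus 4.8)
The plan is to read off the functional equation from It\^o's formula for the logarithm, using that, for fixed $T$ and writing $\KK_t\equiv\KK_t(T)$, the process $M_t\coloneq\E_t\exp(\Xi)$ is an $\Sy_1$-valued martingale with $M_T=\exp(\Xi)$ and $\KK=\log M$. First I would check that $M$ is well defined and genuinely $\Sy_1$-valued: its scalar level is $\E_t(1)=1$, while for $|w|=n$ the component $\exp(\Xi)^w$ is a finite polynomial in $\Xi^{(1)},\dots,\Xi^{(n)}$, hence integrable under the componentwise $\mathcal{L}^{\infty-}$ hypothesis (resp.\ integrable by H\"older together with the graded bound \eqref{ref:Ncond} in the truncated case), so each $M^w_t=\E_t(\exp(\Xi)^w)$ is a scalar martingale. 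Since $\log\colon\Sy_1\to\Sy_0$ is globally defined and polynomial on each graded level, $\KK=\log M$ is an $\Sy_0$-valued semimartingale, and by the graded (``locally finite'') structure it suffices to prove \eqref{eq:main_with_jumps_sym} after projecting to each truncation $\Sy^N$; the truncated statement is then exactly the finite-dimensional instance.

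The computational heart is an It\^o expansion of $g(M)=\log M$ in the commutative algebra $\Sy$, where $g'(M)=M^{-1}$ and $g''(M)=-M^{-2}$ act by multiplication, giving
\begin{equation*}
  \KK_T-\KK_t=\int_{(t,T]}M_{u-}^{-1}\,\mathrm dM_u-\frac12\int_t^T M_{u-}^{-2}\,\mathrm d\QV{M^c}_u+\sum_{t<u\le T}\Big(\Delta\KK_u-M_{u-}^{-1}\Delta M_u\Big).
\end{equation*}
I would then simplify the three terms. Because $M^c$ is the continuous martingale part and $M=\exp(\KK)$, It\^o gives $\mathrm dM^c_u=M_{u-}\,\mathrm d\KK^c_u$, whence $M_{u-}^{-2}\,\mathrm d\QV{M^c}_u=\mathrm d\QV{\KK^c}_u$; taking $\E_t$ turns the second term into $-\tfrac12(\KK\diamond\KK)_t(T)$ by \Cref{def:diamondSym}. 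For the jump term, $\Delta\KK_u=\log M_u-\log M_{u-}$ while $M_{u-}^{-1}\Delta M_u=M_{u-}^{-1}M_u-1=\exp(\Delta\KK_u)-1$, so each summand is $-(\exp(\Delta\KK_u)-1-\Delta\KK_u)$ and the whole sum contributes $-\mathbb{J}_t(T)$. The first, stochastic-integral, term has predictable, locally bounded integrand $M_{u-}^{-1}=\exp(-\KK_{u-})$ and hence, once it is a true martingale, vanishes under $\E_t$. Rearranging $\E_t(\KK_T-\KK_t)=-\tfrac12(\KK\diamond\KK)_t(T)-\mathbb{J}_t(T)$ and inserting $\KK_T=\Xi$ yields \eqref{eq:main_with_jumps_sym}.

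The main obstacle is promoting these local-martingale and formal identities to honest integrable statements. Three points need justification: that $\int M_{u-}^{-1}\,\mathrm dM_u$ is a true martingale (so its conditional expectation is zero); that $(\KK\diamond\KK)_t(T)=\E_t\QV{\KK^c}_{t,T}$ is well defined; and that $\sum_{t<u\le T}(\exp(\Delta\KK_u)-1-\Delta\KK_u)$ is integrable. I would handle all three levelwise, propagating the componentwise $\mathcal{L}^{\infty-}$ hypothesis (resp.\ the graded bound \eqref{ref:Ncond}) through the finitely many products occurring at each symmetric level via H\"older, to obtain $\mathcal{L}^q$-control of every component of $M$, of $\KK$, and of their brackets. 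The true-martingale claim and the existence of the diamond then follow from the Burkholder--Davis--Gundy/homogeneity estimates behind \Cref{thm:bdg_signature} and from the Kunita--Watanabe and Emery inequalities already invoked for \Cref{lem:dm}; the remaining algebra is routine, so the only substantive work is this integrability bookkeeping, which parallels the corresponding step in the proof of \Cref{thm:main_with_jumps}.
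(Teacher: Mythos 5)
Your proof is correct, but it takes a genuinely different route from the paper's. The paper obtains \Cref{thm:main_with_jumps_sym} as a corollary of the noncommutative main result: it sets $\tilde\bX_t=\E_t\Xi$, notes via \Cref{thm:bdg_signature} that $\tilde\bX\in\HSe^{\infty-}(\Sy_0)$ and that $\Sig(\tilde\bX)_{t,T}=\exp(\Xi-\E_t\Xi)$, so $\kap_t(T)=\KK_t(T)-\tilde\bX_t$, and then projects \eqref{eq:master_with_jumps_h_form} onto the symmetric algebra, where $G$, $H$, $Q$ collapse and the bracket terms reassemble into $\frac12\langle\KK^c\rangle_{t,T}$ and the jump sum into $\mathbb J_t(T)$. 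You instead work directly with the $\Sy_1$-valued martingale $M_t=\E_t\exp(\Xi)$ and apply the commutative It\^o formula to $\log M$; your three simplifications ($M_{u-}^{-2}\,\mathrm d\langle M^c\rangle_u=\mathrm d\langle\KK^c\rangle_u$ via $\mathrm dM^c=M_{u-}\,\mathrm d\KK^c$, the jump summand equalling $-(e^{\Delta\KK_u}-1-\Delta\KK_u)$, and $\E_t\KK_T=\E_t\Xi$) are all exactly right and yield \eqref{eq:main_with_jumps_sym} after rearrangement. What your approach buys is self-containedness and transparency: it needs only the levelwise commutative It\^o calculus and never invokes \Cref{thm:main_with_jumps}, making visible that the symmetric statement is elementary. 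What the paper's approach buys is that all the integrability bookkeeping you defer to the last paragraph (true-martingale property of $\int M_{u-}^{-1}\,\mathrm dM_u$, existence of $(\KK\diamond\KK)_t(T)$, summability of the jump series) has already been carried out once, in the proof of \Cref{thm:main_with_jumps} (\Cref{claim:proof_L_estimate}), so the corollary is two lines; in your route that graded H\"older/BDG/Emery argument still has to be written out, especially for the truncated case under the finite-moment condition \eqref{ref:Ncond}, where the exponents $N/n$ must be tracked carefully. Your plan for doing so is the right one, so I see no gap, only work left to execute.
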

\begin{remark} Identity (\ref{eq:main_with_jumps_sym}) is reminiscent of generalized Riccati equations for affine jump diffusions. The relation is, in a nutshell, that (\ref{eq:main_with_jumps_sym}) reduces to a PIDE system when the involved processes have a Markov structure. (We will make this point explicit in Section \ref{sec:Ladp} below, even in the fully non-commutative setting.) These PIDEs reduce to generalized Riccati under appropriate (affine linear) structure of the characteristics. The framework described here however requires neither Markov nor affine structure. We will show in Section \ref{sec:AVp} that such computations also possible in the fully non-commutative setting, i.e. to obtain signature cumulants. 
\end{remark}
\begin{proof}
  We first observe that since \(\Xi\in\mathcal L^{\infty-}\), by Doob's maximal inequality and the
  BDG inequality, we have that \(\tilde\bX_t\coloneq\E_t\Xi\) is a martingale in
  \(\HSe^{\infty-}(\Sy_0)\).
  In particular, thanks to \Cref{thm:bdg_signature}, the signature moments are well defined.
  According to \Cref{sec:mc}, the signature is then given by
  \[
    \Sig(\tilde\bX)_{t,T}=\exp(\Xi-\E_t\Xi),
  \]
  hence \(\kap_t(T)=\KK_t(T)-\tilde\bX_t\).

  Projecting \Cref{eq:master_with_jumps_h_form} onto the symmetric algebra yields
  \begin{align*}
      \kap_t(T)&=
    \begin{multlined}[t]
      \E_t\Bigg\{\tilde\bX_{t,T}+\frac12\langle\tilde\bX^c\rangle_{t,T}+\frac12\langle\kap(T)^c\rangle_{t,T}+\langle\tilde\bX^c,\kap(T)^c\rangle_{t,T}\Bigg.\\
        \Bigg.\quad+\sum_{t<u\le T}\left(
      e^{\Delta\tilde\bX_u+\Delta\kap_u(T)}-1-\Delta\tilde\bX_u-\Delta\kap_u(T) \right)\Bigg\}
    \end{multlined}\\
    &= \E_t\left\{\Xi+ \frac12\langle\KK(T)^c\rangle_{t,T} +\sum_{t<u\le T}\left(
    e^{\Delta\KK_u(T)}-1-\Delta\KK_u(T) \right) \right\}-\tilde\bX_t,
  \end{align*}
  and \cref{eq:main_with_jumps_sym} follows upon recalling that
  \((\KK\diamond\KK)_t(T)=\E_t\langle\KK(T)^c\rangle_{t,T}\).
The proof of the truncated version is left to the reader.
\end{proof}

As a corollary, we provide a general view on recent results of \cite{alos2018exponentiation,lacoin2019probabilistic,friz2020cumulants}.
Note that we also include jump terms in our recursion.
\begin{corollary}
\label{thm:diamond_recursion}
The conditional multivariate cumulants $(\KK_t)_{0\le t\le T}$ of a random variable \(\Xi\) with values in
\(\Sy_0(\R^d)\), componentwise in \(\mathcal L^{\infty-}\) satisfy the recursion
\begin{equation}\label{eq:diamond_recursion}
  \KK^{(1)}_t = \E_t(\Xi^{(1)}) \quad \text{and} \quad \KK^{(n)}_t =
  \E_t(\Xi^{(n)})+\frac{1}{2}\sum_{k=1}^{n}\left( \KK^{(k)} \diamond \KK^{(n-k)}\right)_t(T)+\mathbb J^{(n)}_t(T) \quad \text{ for } \quad n \ge 2,
\end{equation}
with
\[
  \mathbb J^{(n)}_t(T)=\E_t\left( \sum_{t<u\le
  T}\sum_{k=2}^n\frac{1}{k!}\sum_{\|\ell\|=n,|\ell|=k}\Delta\KK^{(\ell_1)}_u(T)\dotsm\Delta\KK_u^{(\ell_k)}(T)
\right).
\]
The analogous statement holds true in the $N$-truncated setting, i.e. as recursion for $n=1,..,N$ under the condition (\ref{ref:Ncond}).
\end{corollary}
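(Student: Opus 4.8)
The plan is to derive \Cref{eq:diamond_recursion} directly from the functional equation \eqref{eq:main_with_jumps_sym} of \Cref{thm:main_with_jumps_sym} by projecting onto each symmetric tensor level $\pi_n$, exactly in the spirit of how \Cref{cor:reucsrion_with_jumps,cor:recursion_h_form} were obtained from their respective functional equations. First I would recall that $\KK_t(T)=\log\E_t\exp(\Xi)$ takes values in $\Sy_0$, and apply $\pi_n$ to both sides of \eqref{eq:main_with_jumps_sym}. The first term $\E_t\Xi$ contributes $\E_t(\Xi^{(n)})$ by linearity. For the diamond term I would use the bilinearity and the grading of the diamond product: by \Cref{def:diamondSym} (in its symmetric-algebra form) the product $\KK\diamond\KK$ is $\Sy$-valued and its level-$n$ component is exactly $\sum_{k=1}^{n}(\KK^{(k)}\diamond\KK^{(n-k)})_t(T)$, since the diamond of scalar semimartingales $\KK^{(k)}$ and $\KK^{(n-k)}$ lands, after the symmetric product $\hat e_{w_1}\hat e_{w_2}$, in level $|w_1|+|w_2|=n$. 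Multiplying by $\tfrac12$ gives the stated sum; note the endpoints $k=0$ and $k=n$ drop out since $\KK^{(0)}=0$ (as $\KK\in\Sy_0$), which is why the recursion sum can be written over $k=1,\dots,n-1$ or, equivalently, $k=1,\dots,n$.

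The genuinely combinatorial part is the jump term $\pi_n\mathbb J_t(T)$. Here I would expand the exponential in the definition of $\mathbb J_t(T)$, writing $e^{\Delta\KK_u}-1-\Delta\KK_u=\sum_{k\ge2}\tfrac{1}{k!}(\Delta\KK_u)^k$, and then project the $k$-th power $(\Delta\KK_u)^k$ to level $n$ in the commutative algebra $\Sy$. Since $\Delta\KK_u=\sum_{m\ge1}\Delta\KK^{(m)}_u$ is a sum over strictly positive levels, the level-$n$ part of $(\Delta\KK_u)^k$ is the sum over all compositions $\ell=(\ell_1,\dots,\ell_k)$ with $\ell_i\ge1$ and $\|\ell\|=n$ of the product $\Delta\KK^{(\ell_1)}_u(T)\dotsm\Delta\KK^{(\ell_k)}_u(T)$. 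This is precisely the index set $\{\|\ell\|=n,\,|\ell|=k\}$ appearing in the statement, and summing over $k=2,\dots,n$ (the upper limit being $n$ since each $\ell_i\ge1$ forces $k\le n$) reproduces $\mathbb J^{(n)}_t(T)$ verbatim. Pulling $\E_t$ through by linearity completes the identification of the three projected terms.

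For the truncated version I would argue that under the graded integrability condition \eqref{ref:Ncond} the level-$n$ components involved, for $n\le N$, depend only on $\KK^{(k)}$ with $k<n$ (through the diamond and jump terms, whose summands involve strictly lower levels), so the recursion closes within $\Sy^{(N)}_0$ and the projection argument is unaffected by truncation; the well-posedness and integrability are guaranteed exactly as in the proof of the truncated part of \Cref{thm:main_with_jumps_sym}. The main obstacle I anticipate is purely bookkeeping rather than conceptual: verifying that the symmetric-algebra grading turns the abstract level-$n$ projection of the $k$-th tensor power into the explicit sum over compositions $\ell$ with the correct factor $\tfrac{1}{k!}$, and checking that no spurious boundary terms ($k=0,1$, or $\ell_i=0$) survive. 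Since $\Sy$ is commutative all $\ad$ operators vanish, so—unlike in \Cref{cor:reucsrion_with_jumps}—there are no nested bracket corrections to track, which makes this the cleanest of the three projection computations; I would therefore present it concisely and leave the routine combinatorial verification, together with the truncated case, to the reader.
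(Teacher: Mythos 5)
Your proposal is correct and coincides with the paper's (implicit) argument: the corollary is obtained precisely by projecting the functional equation \eqref{eq:main_with_jumps_sym} of \Cref{thm:main_with_jumps_sym} onto each symmetric tensor level, with the diamond term splitting by bilinearity and the grading, and the jump term coming from the level-$n$ projection of $\sum_{k\ge2}\frac{1}{k!}(\Delta\KK_u)^k$ as a sum over compositions $\ell$ with $\|\ell\|=n$, $|\ell|=k$. The paper leaves this projection step unstated, so your write-up supplies exactly the intended verification.
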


\begin{example}[Continuous setting] In case of absence of jumps and higher order information (i.e. $\mathbb{J} \equiv 0,\Xi^{(2)} = \Xi^{(3)} = ... \equiv 0$, this type of cumulant recursion appears in \cite{lacoin2019probabilistic} and under optimal integrability conditions $\Xi^{(1)}$ with finite $N$.th moments, \cite{friz2020cumulants}. (This requires a localization argument which is avoided here by directly working in the correct algebraic structure.)
\end{example}

\begin{example}[Discrete filtration]
As opposite of the previous continuous example, we consider a purely discrete situation, starting from a discretely filtered probability space
with filtration $(\mathcal{F}_t\colon t = 0,1,\dotsc,T \in \N)$. For $\Xi$ as in Corollary \ref{thm:diamond_recursion}, a discrete martingale is defined by $\E_t \exp (\Xi)$, which may regard as c\'adl\'ag  semimartingale with respect to $\mathcal{F}_t \coloneq \mathcal{F}_{[t]}$, and similar for $\KK_t(T) = \log \E_t \exp (\Xi) \in\Sy_0$, i.e. the conditional cumulants of $\Xi$. Clearly, the continuous martingale part of $\KK_(T)$ vanishes, as does any diamond product with $\KK_(T)$. What remains is the functional equation
$$
      \KK_t(T) = \E_t (\Xi) + \mathbb{J}_t(T) =  \E_t (\Xi) + \E_t \bigg( \sum_{u=t+1}^T
       \big( e^{\Delta  \KK_u}
- 1 - \Delta \KK_u \big)\bigg)
$$
As before, the resulting expansions are of interest. On the first level, trivially, $\KK^{(1)}_t = \E_t(\Xi^{(1)})$,  whereas on the second level we see
$$
        \KK_t^{(2)}(T) = \E_t(\Xi^{(2)})+ \E_t \bigg( \sum_{u=t+1}^T
        (\E_u (\Xi^{(1)})-\E_{u-1} (\Xi^{(1)}))^2 \bigg)
$$
which one can recognize, in case $\Xi^{(2)} = 0$ as energy identity for the discrete square-integrable martingale $\ell_u := \E_u \Xi^{(1)}$.
Going further in the recursion yields increasingly non-obvious relations. Taking $\Xi^{(2)} = \Xi^{(3)} = ... \equiv 0$ for notational simplicity gives
$$
 \KK_t^{(3)}(T)  =   \mathbb{E}_t \left( \sum_{u = t +
  1}^T (\ell_u - \ell_{u - 1})^3 + 3 (\ell_u - \ell_{u - 1}) \{ \E_u
  \kappa (\ell, \ell)_{u, T} -\mathbb{E}_{u - 1} \kappa (\ell, \ell)_{u - 1,
  T} \} \right)
 $$
 It is interesting to note that related identities have appeared in the statistics literature under the name
{\em Bartlett identities}, cf. Mykland \cite{Myk1994} and the references therein.

\end{example}

\subsection{Remark on tree representation}
As illustrated in the previous section, in the case where \(d=1\), or when projecting onto the symmetric algebra,
our functional equation takes a particularly simple form (see \Cref{thm:main_with_jumps_sym}). If one further
specializes the situation, in particular discards all jump, we are from an algebraic perspective in the setting of
 Friz, Gatheral and Radoi\c{c}i\'c \cite{friz2020cumulants} which give a tree series expansion of cumulants using binary trees.
 This representation follows from the fact that the diamond product of semimartingales is commutative
but not associative. As an example (with notations taken from \Cref{sub:diamond}), in case of a one-dimensional continuous martingale, the first terms are
\[
	\mathbb
	K_t(T)=\Forest{[]}+\frac12\Forest{[[][]]}+\frac12\Forest{[[[][]][]]}+\frac12\Forest{[[[[][]][]][]]}+\frac18\Forest{[[[][]][[][]]]}+\dotsb
\]
This expansion is organized (graded) in terms of the number of leaves in each tree, and each leaf
represents the underlying martingale.

In the deterministic case, tree expansions are also known for the Magnus expansion 
\cite{iserles1999solution} and the BCH formula \cite{CM2009}.
These expansions also in terms of binary trees, but this time they are also required to be
non-planar to account for the non-commutativity of the Lie algebra.
As an example (with the notations of \Cref{sec:hausmag}), we have
\[
	\Omega_t(T) =
	\Forest{[]}+\frac12\Forest{[[[]][]]}+\frac{1}{12}\Forest{[[][[[]][]]]}+\frac{1}{4}\Forest{[[[[[]][]]][]]}+\dotsb
\]
In this expansion, the nodes represent the underlying vector field and edges represent integration
and application of the Lie bracket, coming from the \(\ad\) operator.

Since our functional equation and the associated recursion puts both contexts into a single common
framework. We suspect that our general recursion, Corollary \ref{cor:reucsrion_with_jumps} and thereafter, allows
for a sophisticated tree representation, at least in absence of jumps, and propose to return to this question in future work.


\section{Applications}
\subsection{Brownian and stopped Brownian signature cumulants}
\subsubsection{Time dependent Brownian motion}
Let $B$ be a $m$-dimensional standard Brownian motion defined on a portability space $(\Omega, \F, \PM)$ with the canonical filtration $(\F_t)_{t\ge0}$ and define the continuous (Gaussian) martingale $X = (X_t)_{0\le t\le T}$ by
\begin{align*}
    X_t = \int_0^{t} \sigma(u)\,\dd B_u, \quad 0 \le t \le T,
\end{align*}
with $\sigma \in L^2 ([0, T],\R^{m\times d})$. An immediate application of \Cref{thm:main_with_jumps} shows that
the integrability condition $\bX = (0, X, 0, \dots)\in\HSe^{\infty-}$ is trivially satisfied.
The {\em Brownian signature cumulants}
$\kap_t(T) =
\log(\E_t(\Sig(\bX)_{t,T}))$ satisfies the functional equation, with $a(t) \coloneq \sigma(t)\sigma(t)^T \in \mathrm{Sym}(\Rd \otimes \Rd),$
\begin{align}
\kap_t(T) = \int_t^{T} H(\ad{\kap_u(T)})(a(u)) \dd u, \quad 0 \le t \le T. \label{equ:BrowSigCum}
\end{align}
Therefore the tensor levels are precisely given by the Magnus expansion, starting with $$ \kap^{(1)}_t(T) = 0,\quad  \kap^{(2)}_t(T) = \tfrac{1}{2}\int_t^T a (u) \dd u, $$ and the general term
\begin{align*}
   \kap^{(2n-1)}_t(T) \equiv 0, \quad \kap^{(2n)}_t(T)
   &= \sum_{|\ell|\ge2, \Vert\ell\Vert=2n} \mathrm{HMag}^{2}(\bX, \kap; \ell)_{t,T} \\
   &= \sum_{\Vert\ell\Vert=n-1} \frac{B_{k}}{k!} \int_t^{T} \ad{\kap^{(2\cdot l_1)}_{u}} \cdots \ad{\kap^{(2\cdot l_{k})}_{u}} \left(a(u)\right)\dd u.
\end{align*}

Note that $\kap_t(T)$ is Lie in $\mathrm{Sym}(\Rd \otimes \Rd) \subset \tf$, but, in general, not a Lie series. In the special case $X=B$, i.e. $m=d$ and identity matrix $\sigma = \I_d= \sum_{i=1}^{d}e_{ii}\in \mathrm{Sym}(\Rd\otimes\Rd)$, all commutators vanish and we obtain what is known as \emph{Fawcett's formula} \cite{fawcett2002problems,friz2020course}.
\begin{align*}
\kap_t(T) = \tfrac{1}{2} (T-t)\I_d \, .
\end{align*}
\begin{example}
  Consider \(B^1,B^2\) two Brownian motions on the filtered space \((\Omega,\mathcal F,\mathbb P)\),
  with correlation \(\dd\langle B^1,B^2\rangle_t=\rho\,\dd t\) for some fixed constant \(\rho\in[-1,1]\).
  Suppose that \(K^1,K^2\colon[0,\infty)^2\to\R\) are two kernels such that \(K^i(t,\cdot)\in
  L^2([0,t])\)
  for all \(t\in [0,T]\), and set
  \[
    X^i_t\coloneq X_0^i+\int_0^tK^i(t,s)\,\dd B^i_s,\quad i=1,2
  \]
  for some fixed initial values \(X^1_0,X^2_0\).
  Note that neither process is a semimartingale in general. However, for each \(T>0\), the process
  \(\xi^i_t(T)\coloneq\E_t[X^i_T]\) is a martingale and we have
  \[
    \xi^i_t(T)=X^i_0+\int_0^tK^i(T,s)\,\dd B^i_s,
  \]
  that is, \((\xi^1,\xi^2)\) is a time-dependent Brownian motion as defined
  above.
  In particular, one sees that
  \[
    a(t)=\begin{pmatrix}\int_0^tK^1(T,u)^2\,\dd u&\rho\int_0^tK^1(T,u)K^2(T,u)\,\dd
    u\\\rho\int_0^tK^1(T,u)K^2(T,u)\,\dd u&\int_0^tK^2(T,u)^2\,\dd u\end{pmatrix}.
  \]
  \Cref{equ:BrowSigCum} and the paragraph below it then give an explicit recursive formula for the
  signature cumulants, the first of which are given by
  \begin{align*}
    \kap_t^{(1)}(T)&= 0,\\
    \kap_t^{(2)}(T)&=  \frac12\begin{pmatrix}\int_t^T\int_0^uK^1(T,r)^2\,\dd r\dd
      u&\rho\int_t^T\int_0^uK^1(T,r)K^2(T,r)\,\dd r\dd u\\[1ex]\rho\int_t^T\int_0^uK^1(T,r)K^2(T,r)\,\dd
    r\dd u&\int_t^T\int_0^uK^2(T,r)^2\,\dd r\dd u\end{pmatrix},\\
    \kap_t^{(3)}(T)&= 0,\\
    \kap_t^{(4)}(T)&= \frac1{2}\sum_{i,j,i',j'=1}^2\left[\int_t^T\int_u^T\left( a_{ij}(u)a_{i'j'}(r)-a_{i'j'}(u)a_{ij}(r)\right)\,\dd r\dd u\right]e_{iji'j'}.
  \end{align*}
  We notice that in the particular case when \(K^1=K^2\equiv K\), the matrix \(a\) has the form
  \[
    a(t)=\int_0^tK(T,u)^2\,\dd u\times\begin{pmatrix}1&\rho\\\rho&1\end{pmatrix}.
  \]
  Therefore, we have \(a(t)\otimes a(t')-a(t')\otimes a(t)=0\) for any \(t,t'\in[0,T]\).
  Hence, in this case, our recursion shows that for any \(\rho\in[-1,1]\),
  \[
    \kap_t^{(1)}(T)=0,\quad\kap_t^{(2)}(T)=\frac12\int_t^T\int_0^uK(T,r)^2\,\dd r\,\dd
    u\times\begin{pmatrix}1&\rho\\\rho&1\end{pmatrix},
  \]
  and \(\kap_t^{(n)}(T)=0\) for all \(0\le t\le T\) and \(n\ge 3\).
\end{example}

\subsubsection{Brownian motion up to the first exit time from a domain}

Let $B=(B_t)_{t\ge 0}$ be a $d$-dimensional Brownian motion defined on a filtered probability space $(\Omega, \F, \PM)$ with the canonical filtration $(\F_t)_{t\ge0}$ and a possibly random starting value $B_0$.
Assume that there is a family of probability measures $\{\PM^x\}_{x\in\Rd}$ on $(\Omega, \F)$ such that $\PM^x(B_0 = x) = 1$ and denote by $\E^x$ the expectation with respect to $\PM^x$.
Further let $\Gamma \subset \Rd$ be a bounded domain and define the stopping time $\tau_\Gamma$ of the first exit of $B$ from the domain $\Gamma$, i.e.
\begin{align*}
\tg = \inf\{t\ge0 \;\vert\;  B_t \in \Gamma^{c}\}.
\end{align*}
In \cite{lyons2015expected} Lyons--Ni exhibit an infinite system of partial differential equations for the expected signature of the Brownian motion until the exit time as a functional of the starting point.
The following result can be seen as the corresponding result for the signature cumulant, which follows directly
from the expansion in \Cref{thm:main_result}.
Recall that a boundary point $x \in \partial\Gamma$ is called \emph{regular} if and only if
\begin{align}
\PM^x\big( \inf\{t > 0 \;\vert\; B_t \in \Gamma^{c}\} = 0\big) = 1.
\end{align}
The domain $\Gamma$ is called regular if all points on the boundary are regular.
For example domains with smooth boundary are regular and see \cite[Section 4.2.C]{karatzas1991brownian} for a further characterization of regularity.

\begin{corollary}\label{cor:stopped_bm}
Let $\Gamma \subset \Rd$ be a regular domain, such that
\begin{align}\label{eq:tau_integrabiliy_assumption}
\sup_{x\in\Gamma}\E^x(\tau_\Gamma^n)<\infty, \quad n\in\None.
\end{align}
The signature cumulant $\kap_t = \log(\E(\Sig(B)_{t\wedge\tg, \tg}))$ of the Brownian motion $B$ up to the first exit from the domain $\Gamma$ has the following form
\begin{align*}
\kap_t = \tltg \mathbf{F}(B_t), \quad t\ge0,
\end{align*}
where $\mathbf{F} = \sum_{|w|\ge 2} e_w F^{w}$ with $F^{w}\in C^{0}(\ol{\Gamma},\R)\cap C^{2}(\Gamma,\R)$ is the unique bounded classical solution to the elliptic PDE
\begin{align}\label{eq:pde_domain_exit_cumulant}
-\Delta \mathbf{F} (x) &= \sum_{i=1}^{d}H(\ad{\mathbf{F} (x)})\Big(e_{ii} + Q(\ad{\mathbf{F} (x)})(\partial_i \mathbf{F} (x)^{\otimes 2}) + 2e_i G(\ad{\mathbf{F} (x)})(\partial_i \mathbf{F} (x)) \Big),
\end{align}
for all $x\in\Gamma$ with the boundary condition
$\mathbf{F}\vert_{\partial\Gamma} \equiv 0$.
\end{corollary}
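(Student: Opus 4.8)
The plan is to specialize the functional equation of Theorem~\ref{thm:main_with_jumps} to the continuous martingale driver $\bX=(0,B^{\tg},0,\dotsc)$, where $B^{\tg}_t=B_{t\wedge\tg}$, and then to convert it into the elliptic boundary value problem \eqref{eq:pde_domain_exit_cumulant} by exploiting the Markov structure. First I would verify integrability. Since $\Gamma$ is bounded, $B^{\tg}$ stays in $\overline{\Gamma}$, and by the Burkholder--Davis--Gundy inequality $\Abs{B^{\tg,i}}_{\HSe^q}$ is comparable to $\Abs{\tg^{1/2}}_{\Lcal^q}=\Abs{\tg}_{\Lcal^{q/2}}^{1/2}$, which is finite for every $q$ by \eqref{eq:tau_integrabiliy_assumption}. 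Hence $\bX\in\HSe^{\infty-}(\tf)$ and Theorem~\ref{thm:main_with_jumps} applies; moreover $\tg<\infty$ a.s., so $B^{\tg}$ is eventually constant and the conditional cumulant $\kap_t=\log\E_t(\Sig(B)_{t\wedge\tg,\tg})$ arises from \eqref{eq:master_with_jumps_h_form} in the horizon limit, with all $\Delta\bX$-dependent jump corrections absent because $B^{\tg}$ is continuous.

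Next I would fix the spatial ansatz. By the strong Markov property of $B$ together with the flow property of the signature (Lemma~\ref{lem:Chen}), on $\{t<\tg\}$ one has $\E_t(\Sig(B)_{t,\tg})=\E^{B_t}(\Sig(B)_{0,\tg})$, while on $\{t\ge\tg\}$ the signature is trivial; thus $\kap_t=\tltg\,\bF(B_t)$ with $\bF(x)\coloneq\log\E^x(\Sig(B)_{0,\tg})$, which explains the indicator $\tltg$. Martingality of $B^{\tg}$ forces $\bF^{(1)}\equiv0$, so $\bF=\sum_{|w|\ge2}e_w F^{w}$ as claimed, and it remains to identify $\bF$ as the announced PDE solution.

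For the derivation of the PDE I would, rather than differentiate the a priori irregular $\bF$, construct a candidate level by level and then invoke uniqueness. Projecting \eqref{eq:master_with_jumps_h_form} to tensor level $n$ and evaluating the brackets by Itô's formula applied componentwise to $F^{w}(B_t)$ gives $\dd\QV{\bX^{c}}_u=\sum_i e_{ii}\,\dd u$, $\dd\outerbracket{\kap^{c}}{\kap^{c}}_u=\sum_i\partial_i\bF(B_u)^{\otimes2}\,\dd u$ and $\dd\outerbracket{\bX^{c}}{\kap^{c}}_u=\sum_i e_i\otimes\partial_i\bF(B_u)\,\dd u$; the remaining stochastic integral $\int H(\ad{\kap_{u-}})(\dd\bX_u)$ is, level by level, a true martingale (bounded previsible integrand against $B$, with all moments), hence drops out under $\E_t$. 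Writing $M_t\coloneq\kap_t+\int_0^t(\text{drift integrand})\,\dd u$ and demanding that it be a martingale forces the Itô drift of $\bF(B_t)$, namely $\tfrac12\Delta\bF(B_t)$, to cancel that integrand; multiplying through by $-2$ reproduces \eqref{eq:pde_domain_exit_cumulant}, the coefficient $2$ on the covariation term arising precisely because that contribution alone carries coefficient $1$ (rather than $\tfrac12$) in \eqref{eq:master_with_jumps_h_form}.

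Finally I would settle regularity and uniqueness. Projected to level $n$, \eqref{eq:pde_domain_exit_cumulant} is a Poisson problem $-\Delta F^{(n)}=g^{(n)}$ on $\Gamma$ with $F^{(n)}|_{\partial\Gamma}=0$, whose right-hand side $g^{(n)}$ is a polynomial in the $\partial_i F^{(k)}$ with $k<n$ (the operators $\ad{\bF}$, $Q$, $G$ raise the tensor degree); this triangular structure lets me solve inductively, starting from $F^{(1)}=0$. Given $C^2$ lower levels, $g^{(n)}$ is locally Hölder, so interior Schauder estimates yield $F^{(n)}\in C^{2,\alpha}_{\mathrm{loc}}(\Gamma)$, while boundedness of $g^{(n)}$ and regularity of $\Gamma$ give $F^{(n)}\in C^{0}(\overline{\Gamma})$ with zero boundary values (e.g.\ \cite[Section~4.2.C]{karatzas1991brownian}); the same triangular structure and the maximum principle give uniqueness of the bounded classical solution. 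The resulting $C^2$ function $\bF$ makes $\tltg\,\bF(B_t)$ a genuine solution of \eqref{eq:master_with_jumps_h_form}, so by the uniqueness part of Theorem~\ref{thm:main_with_jumps} it coincides with $\kap$, which in turn identifies $\bF$ with $\log\E^{\cdot}(\Sig(B)_{0,\tg})$. I expect the main obstacle to be the regularity analysis up to the boundary and the attendant justification of applying Itô's formula; the graded (triangular) structure is exactly what makes both the Schauder bootstrap and the uniqueness argument proceed one tensor level at a time.
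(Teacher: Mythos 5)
Your proposal is correct, and its skeleton --- integrability of $\bX=(0,B^{\tg},0,\dotsc)$ from the moment assumption on $\tg$, specialization of \Cref{thm:main_with_jumps} with all jump terms absent by continuity of Brownian martingales, and an induction over tensor levels reducing level $n$ to a Poisson problem whose data is a polynomial in the gradients of the lower levels --- is the same as the paper's. Where you genuinely diverge is in how the PDE is attached to $\kap$. The paper works representation-first: it projects the stopped functional equation to level $N$, passes to the $T\to\infty$ limit by dominated convergence (justified via \eqref{eq:l_tilde_estimate}), and obtains $\kap^{(N)}_t=\tltg\,\mathbf F^{(N)}(B_t)$ with $\mathbf F^{(N)}(x)=\E^x\int_0^{\tg}\mathbf G^{(N)}(B_u)\,\dd u$ directly; the identification of this expectation as the unique bounded classical solution of the Poisson equation is then delegated to the stochastic solution theory (Øksendal, Thms.~9.3.2--9.3.3), so no elliptic existence theory and no Itô formula for $\mathbf F$ are needed. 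You instead go PDE-first: solve the triangular system by interior Schauder estimates plus boundary continuity at the Wiener-regular boundary, verify by Itô that $\tltg\,\mathbf F(B_t)$ satisfies the functional equation, and invoke uniqueness. Both routes close; yours buys a self-contained elliptic existence statement at the price of (a) justifying Itô's formula for a function that is only $C^2$ in the open domain (a localization $\Gamma_k\uparrow\Gamma$, which you correctly flag as the main technical burden), and (b) checking that the uniqueness clause of \Cref{thm:main_with_jumps}, stated for a fixed finite horizon $T$, transfers to the stopped, infinite-horizon equation --- which it does, because the limit equation retains the same triangular structure determining $\kap^{(n)}$ from lower levels. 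Your upfront ansatz $\kap_t=\tltg\,\mathbf F(B_t)$ with $\mathbf F(x)=\log\E^x(\Sig(B)_{0,\tg})$, read off from the Markov property and \Cref{lem:Chen}, is a clean structural observation that the paper leaves implicit and only recovers level by level.
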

\begin{proof}
Define the martingale $\bX = ((0, B_{t\wedge\tg}, 0, \dotsc) )_{t\ge0}\in \Se(\tf)$ and note that
$\abs{\QV{\bX}_\infty} = \tg$.
It then follows from the integrability of $\tg$ that $\bX \in \HSe^{\infty-}(\TT_0)$ and thus by \Cref{thm:bdg_signature} that $(\Sig(\bX)_{0,t})_{t\ge0} \in \HSe(\TT_1)^{\infty-}$.
This implies that the signature cumulant $\kap_t(T)\coloneq \log(\E_t(\Sig(\bX)_{t,T}))$ is well defined for all $0 \le t \le T < \infty$ and furthermore under (component-wise) application of the dominated convergence theorem that it holds
\begin{align*}
\kap_t = \lim_{T\to\infty} \kap_t(T) = \lim_{T\to\infty}\log(\E_t(\Sig(\bX)_{t,T})) = \log(\E_t (\Sig(B)_{t\wedge \tg, \tg})), \quad t\ge 0.
\end{align*}

Again by $\bX \in \HSe^{\infty-}(\TT_0)$ it follows that \Cref{thm:main_result} applies to the
martingale $(\bX_t)_{0\le t \le T}$ for any $T>0$ and therefore $\kap(T)$ satisfies the functional
equation \eqref{eq:master_with_jumps_h_form}.
It is well known that all martingales with respect to  the filtration $(\F_t)_{0 \le t \le T}$ are continuous, and therefore it is easy to see that also $\kap(T)\in\Sec(\tf)$.
Therefore \eqref{eq:master_with_jumps_h_form} simplifies to the following equation
\begin{multline}\label{eq:stopped_master}
    \kap_t(T) = \tltg \E_t \Biggl\{\frac{1}{2}\int_t^{\tg \wedge T}H(\ad{\kap_u})(\I_d)\,\dd u
    + \int_t^{\tg\wedge T} \frac{1}{2} H(\ad{\kap_u})\circ Q(\ad{\kap_u})(\dd\outerbracket{\kap}{\kap}_{u})\\
    + \int_t^{\tg\wedge T} H(\ad{\kap_u}) \circ (\mathrm{Id} \odot G(\ad{\kap_u}))
  (\dd\outerbracket{X}{\kap}_u)\Biggr\},
\end{multline}
where we have already used the martingality of $\bX$ and the explicit form of the quadratic variation $\QV{\bX}_t = \I_d(t \wedge \tg)$ with $\I_d = \sum_{i=1}^{d}e_{ii} \in (\Rd)^{\otimes 2}$.
It follows that $\kap^{(1)} \equiv \kap(T)^{(1)} \equiv 0$ and for the second level we have from the integrability of $\tg$ and the strong Markov property of Brownian motion that
\begin{align*}
\kap_t^{(2)} =\frac{1}{2}\I_d  \lim_{T\to\infty}\E_t\left(  \tltg(\tg\wedge T -t) \right)= \frac{1}{2}\I_d \tltg\left.\E^{x}(\tg)\right\vert_{x=B_t}, \quad t\ge0.
\end{align*}
Now note that
the function $u(x) \coloneq  \E^x(\tg)$ for $x \in \Gamma$ is in $C^{0}(\overline\Gamma, \R)\cap C^{2}(\Gamma, \R)$ and solves the Poisson equation
$ -(1/2)\Delta u = g$ with boundary condition $u\vert_{\partial \Gamma} = 0$ and data $g \equiv 1$.
Indeed, since $\Gamma$ is regular and $g$ is bounded and differentiable, this follows from Theorem 9.3.3 (and the remark thereafter) in \cite{oksendal2014stochastic}.
Moreover from the assumption \eqref{eq:tau_integrabiliy_assumption} we immediately see that $u$ is bounded on $\ol\Gamma$ and it follows from Theorem 9.3.2 in \cite{oksendal2014stochastic} that $u$ is the unique bounded classical such solution.
Thus we have shown that the statement holds true up to the second tensor level with $\mathbf{F}^{(1)}\equiv 0$ and $\mathbf{F}^{(2)} = \I_d u$ under the usual notation $\mathbf{F}^{(n)} = \sum_{|w|=n}e_wF^{w}$.

Now assume that the statement of the corollary holds true up to the tensor level $(N-1)$ for some $N \ge 3$.
Then, for any $n, k < N$ we have by applying  Itô's formula
\begin{align*}
\outerbracket{\kap^{(n)}}{\kap^{(k)}}_t = \sum_{i=1}^{d}\int_0^{t\wedge \tg}(\partial_i
\mathbf{F}^{(n)}(B_u)) \otimes (\partial_i \mathbf{F}^{(k)}(B_u))\,\dd u, \quad t\ge 0,
\end{align*}
and
\begin{align*}
\outerbracket{\bX}{\kap^{(n)}}_t = \sum_{i=1}^{d}\int_0^{t\wedge \tg} e_i \otimes (\partial_i
\mathbf{F}^{(n)}(B_u))\,\dd u, \quad t\ge 0.
\end{align*}

Further define the function $\mathbf{G}^{(N)}$ by the projection under $\pi_N$ of the right hand side of \eqref{eq:pde_domain_exit_cumulant} multiplied by the factor $1/2$.
Then applying \Cref{thm:main_with_jumps} to $\bX^{(0,N)}$ on the probability space $(\Omega, \F, \PM^{x})$ we see that it follows from the estimate \eqref{eq:l_tilde_estimate} that there exists a constant $c>0$ such that
\begin{align*}
\sup_{x\in\Gamma}\E^x \left\{ \int_0^\tg \big\vert \mathbf{G}^{(N)}(B_u) \big\vert\,\dd u\right\} \le c \sup_{x\in\Gamma}\Aabss{\bX^{(0,N)}}_{\HSehom^{1,N}(\mathbb{P}^{x})} = c \sup_{x\in\Gamma} \E_x(\tau_\Gamma^{N}) <\infty
\end{align*}
Therefore it follows, from projecting \eqref{eq:stopped_master} to level $N$ and using
the dominated convergence theorem to pass to the $T\to\infty$ limit, that $\kap^{(N)}$ is of the form
\begin{align*}
    \kap_t^{(N)} =  \tltg \mathbf{F}^{(N)}(B_t)
    \quad\text{with}\quad
    \mathbf{F}^{(N)}(x)\coloneq \E^x \left\{  \int_0^{\tg} \mathbf{G}^{(N)}(B_u)\,\dd u\right\},\quad x\in\ol{\Gamma}.
\end{align*}
Furthermore, by the assumption it also holds that $G^{w} \in C^1(\Gamma)$ for all $w\in\W_d$, $|w|=N$.
Therefore we can conclude again with Theorem 9.3.3 in \cite{oksendal2014stochastic} that $F^{w} \in C^{0}(\ol\Gamma, \R)\cap C^{2}(\ol\Gamma, \R)$ solves the Poisson equation with data $g=G^{w}$ for all words $w$ with $|w|=N$.
The statement then follows by induction.
\end{proof}

\begin{example} For $n \in
  \{1,\dotsc,d\}$, let $\mathbb{D}^n $ be the open unit ball in $\R^n$ 
and define the (regular) domain $\Gamma = \mathbb{D}^n \times \R^{d-n}\subset \R^d$.
Further note that it holds
\begin{align*}
\tg = \inf\{t \ge 0\;\vert\; B_t \notin \Gamma\} = \inf\{t \ge 0\;\vert\; \vert(B^1_t, \dots, B^n_t)\vert \ge 1\}.
\end{align*}
Hence we readily see that $\tg$ satisfies the condition \eqref{eq:tau_integrabiliy_assumption}.
Applying \Cref{cor:stopped_bm} it follows that the signature cumulant of the Brownian motion $B$ up to the exit of the domain $\Gamma$ is of the form $\kap_t = \tltg \mathbf{F}(B_t)$, where $\mathbf{F}$ satisfies the PDE \eqref{eq:pde_domain_exit_cumulant}.
Recall that $\mathbf{F}^{(1)}\equiv 0$ and projecting to the second level we see that
\begin{align*}
-\Delta\mathbf{F}^{(2)}(x)= \I_d, \quad x\in\Gamma; \qquad \mathbf{F}^{(2)}\vert_{\partial\Gamma} \equiv 0.
\end{align*}
The unique bounded solution the above Poisson equation is given by
$$\mathbf{F}^{(2)}(x) = \frac{1}{2}\I_d \left(1-\sum_{i=1}^n x_i^2\right), \quad x \in \Gamma.$$
More generally, we see that the Poisson equation $\Delta u = -g$ on $\Gamma$ with zero boundary
condition, where $g\colon\Gamma \to \R$ is a polynomial in the first $n$-variables, has a unique bounded solution $u$ which is also a polynomial of the first  $n$-variables of degree $\mathrm{deg}(u) = \mathrm{deg}(g)+2$ and has the factor $(1-\sum_{i=1}^n  x_i^2)$ (see Lemma 3.10 in \cite{lyons2015expected}).
Hence it follows inductively that each component of $\mathbf{F}^{(n)}$ is a polynomial of degree $n$ with the factor $(1-\sum_{i=1}^n x_i^2)$.
The precise coefficients of the polynomial can be obtained as the solution to a system of linear equations recursively derived from the forcing term in \eqref{eq:pde_domain_exit_cumulant}.
This is similar to \cite[Theorem 3.5]{lyons2015expected}, however we note that a direct conversion of the latter result for the expected signature to signature cumulants is not trivially seen to yield the same recursion and requires combinatorial relations as studied in \cite{bonnier2019signature}.
\end{example}

\subsection{Lévy and diffusion processes} \label{sec:Ladp}

Let $X\in\Se(\R^d)$ and throughout this section assume that the filtration $(\F_t)_{0 \le t \le T}$ is generated by $X$.
Denote by $\varepsilon_a$ the Dirac measure at point $a\in\Rd$, the random measure $\mu^{X}$ associated to the jumps of $X$ is an integer-valued random measure of the form
$$
\mu^{X}(\omega; \dd t, \dd x) \coloneq \sum_{s\geq0} \indik_{\{\Delta X_{s}(\omega) \neq 0\}}
\varepsilon_{(s, \Delta X_{s}(\omega))} (\dd t, \dd x).
$$
There is a version of the predictable compensator of \(\mu^X\), denoted by \(\nu\), such that the
\(\mathbb{R}^d\)-valued semimartingale \(X\) is quasi-left continuous if and only if \(\nu(\omega,
\{t\} \times \mathbb{R}^d) = 0\) for all \(\omega \in \Omega\), see \cite[Corollary II.1.19]{jacod2003limit}.
In general, \(\nu\) satisfies
$(|x|^{2} \wedge 1) \ast \nu \in \mathscr{A}_{\loc}$, i.e. locally of integrable variation. The semimartingale  $X$ admits a \emph{canonical representation} (using the usual notation for stochastic integrals with respect to random measures as introduced e.g. in \cite[II.1]{jacod2003limit})
\begin{align}\label{eq:representation_from_characteristics}
X=X_{0}+B(h)+X^{c}+(x-h(x)) \ast \mu^{X} + h(x) \ast (\mu^{X}-\nu),
\end{align}
where $h(x) = x 1_{|x| \le1}$ is a truncation function (other choice are possible.)
Here $B(h)$ is a predictable $\R^d$-valued process with components in $\mathscr{V}$ and $X^{c}$ is the continuous martingale part of $X$.

Denote by $C$ the predictable $\R^{d} \otimes \mathbb{R}^d$-valued covariation process defined as
$C^{ij}\coloneq\langle X^{i,c}, X^{j,c} \rangle$. 
Then the triplet $(B(h), C, \nu)$ is called the \emph{triplet of predictable characteristics} of $X$ (or simply the \emph{characteristics} of $X$). 
In many cases of interest, including the case of Lévy and diffusion processes discussed in the
subsection below, we have differential characteristics $(b,c,K)$ such that $$\dd B_t = b_t (\omega)
\dd t, \ \dd C_t = c_t (\omega) \dd t,\  \nu(\dd t,\dd x) = K_{t}(\dd x;\omega) \dd t,$$
where $b$ is a $d$-dimensional predictable process, $c$ is a predictable process taking values in the set of symmetric non-negative definite $d\times d$-matrices and $K$ is a transition kernel from $(\Omega \times \R_{+}, \mathcal{B}^d)$ into $(\R^d, \mathcal{B}^d)$. We call such a process \textit{It\^o semimartingale} and the triplet $(b,c,K)$ its {\em differential} (or \emph{local}) \emph{characteristics}. This extends mutatis mutandis to an $\tf^N$ (and then $\tf$) valued semimartingale $\bX$, with local characteristics $(\mathbf{b},\mathbf{c},\mathbf{K})$.

While every It\^o semimartingale is quasi-left continuous it is in general not true that $\kap$ is continuous (with the notable exception of time-inhomogeneous Lévy processes discussed below) and therefore there is no significant simplification of the functional equation \eqref{eq:master_with_jumps_h_form} in these general terms.
The following example illustrates this point in more detail.

\begin{example} Take $X \in \Se(\R^d)$ and then $d=1$, so that we are effectively in the symmetric setting. In this case $\hatexp(\kap_t (T)) = \E_t(\hatexp({X_T -X_t}))$, in the power series sense of enlisting all moments with factorial factors. These can also be obtained by taking higher order derivatives at $u=0$ of $\E_t (e^{u (X_T -X_t)})$, now with the classical calculus interpretation of the exponential.
The important class of affine models satisfies
$$
      \E_t (e^{u X_T-u X_t}) = \exp ( \phi (T-t,u) +( \Psi (T-t,u)- u)X_t )
$$
In the Levy-case, we have the trivial situation $\Psi(\cdot,u) \equiv u$, but otherwise $(\phi,\Psi)$ solve (generalized) Riccati equations and are in particular continuous in $T-t$. We see that, in non-trivial situations, the log of $ \E_t(e^{u X_T-u X_t})$ and any of its derivatives will jump when $X$ jumps. In particular, $\kappa_t(T)$ will not be continuous in $t$, even if $X$ is quasi-left continuous.
Let us note in this context that, in the general non-commutative setting and directly from definition of $\kap$,
$$
       \exp(\kap_{t-}) = \E_{t-} ( \exp ( \Delta X_t)  \exp(\kap_{t}) ) = \E_{t-} ( \exp \kap_{t} )
$$
where the second equality holds true under the assumption of quasi-left continuity of $X$. If we assume for a moment $\F_{t-} = \F_t$, then we could conclude that $\kap_{t-} = \kap_t$ and hence (right-continuity is clear) that $\kap_t$ is continuous in $t$. Since we know that this fails beyond L\'evy processes, if follows that such left continuity of filtrations is not a good assumption, at least not beyond L\'evy processes.
\end{example}

\subsubsection{The case of time-inhomogeneous L\'evy processes}

We consider now a $d$-dimensional time-inhomogeneous L\'evy processes of the form
\begin{align}\label{eq:levy_decomp}
    X_t = \int_0^{t}b(u)\,\dd u + \int_0^{t}\sigma(u)\,\dd B_u + \int_{(0,t]} \int_{|x|\le 1} x \;
    (\mu^X-\nu)(\dd s, \dd x) + \int_{(0,t]} \int_{|x| > 1} x \; \mu^X(\dd s, \dd x),
\end{align}
for all $0\le t\le T$, with $b \in L^{1}([0,T],R^d)$, $\sigma \in L^2 ([0, T],\R^{m\times d})$, $B$
a $d$-dimensional Brownian motion, $\mu^X$ is an independent inhomogeneous Poisson random measure
with the intensity measure $\nu$ on $[0,T] \times\R^d$, such that $\nu(\dd t, \dd x) = K_t(\dd x)\dd t$ with L\'evy measures $K_t$, i.e. $K_t(\{0\})=0$, and
\begin{align*}
\int_0^T\int_{\Rd} ({|x|}^2 \wedge 1) K_t(\dd x)\dd t < \infty,
\end{align*}
and measurability of $t \mapsto K_t (A) \in [0,\infty]$, any measurable $A \subset \R^d$.
Consider further the condition
\begin{align}\label{cond:moment_condition_levy_measure}
    \int_0^T\int_{\Rd}\abs{x}^N \xgone K_t(\dd x) < \infty, 
\end{align}
for some integer $N\in\None$.
The Brownian case (\ref{equ:BrowSigCum}) then generalizes as follows.
\begin{corollary}\label{cor:inhom_levy}
Let $X$ be an inhomogenous L\'evy process of the form \eqref{eq:levy_decomp}, such that the family of L\'evy measures $\{K_t\}_{t>0}$ satisfy the moment condition \eqref{cond:moment_condition_levy_measure} for all $N\in\None$.
Then $X\in\HSe^{\infty-}(\Rd)$and the signature cumulant $\kap_t \coloneq  \log( \E_t(\Sig(X)_{t,T}))$ satisfies the following integral equation
\begin{align}\label{eq:inhom_levy_kintchin}
    \kap_t = \int_t^{T}H(\ad{\kap_{u}})(\etfrak(u))\,\dd u,\quad 0 < t \le T,
\end{align}
where $a(t) = \sigma(t) \sigma(t)^{T}\in 
\Rd\otimes\Rd \subset \tf$ and
\begin{align}\label{def:levy_kintchin_exponent}
\etfrak(t)\coloneqq  b(t) + \frac{1}{2}a(t) + \int_{\Rd}(\exp(x)-1-x\xlone) K_t(\dd x) \in \tf.
\end{align}
In case the L\'evy measures $\{K_t\}_{t>0}$ satisfy the condition \eqref{cond:moment_condition_levy_measure} only up to some finite level $N\in\None$, we have $X\in\HSe^{N}$ and the identity \eqref{eq:inhom_levy_kintchin} holds for the truncated signature cumulant in $\tf^N$.
\end{corollary}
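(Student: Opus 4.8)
The plan is to read off \eqref{eq:inhom_levy_kintchin} as a specialization of the functional equation \eqref{eq:master_with_jumps_h_form} from \Cref{thm:main_with_jumps}. First I would settle integrability. Under \eqref{cond:moment_condition_levy_measure} for every $N$, the classical moment estimates for time-inhomogeneous L\'evy processes---bounding $\GQVsmall{X}_T^{1/2}$ together with the total variation of the drift and large-jump part in every $\Lcal^q$---yield $X\in\HSe^q(\R^d)$ for all $q\in[1,\infty)$, so that $\bX=(0,X,0,\dots)\in\HSe^{\infty-}(\tf)$ and \Cref{thm:main_with_jumps} applies. If \eqref{cond:moment_condition_levy_measure} only holds up to level $N$, the same estimates give $X\in\HSe^N$, equivalently $\bX\in\HSehom^{1,N}$, and one argues with the truncated form of \eqref{eq:master_with_jumps_h_form}.

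The decisive structural input is that $\kap$ is deterministic and continuous in $t$. Since the filtration is generated by $X$ and $X$ has independent increments, the increments $(X_s-X_t)_{t\le s\le T}$---and hence $\Sig(X)_{t,T}$---are independent of $\F_t$; therefore $\fmu_t(T)=\E_t(\Sig(X)_{t,T})=\E(\Sig(X)_{t,T})$ is deterministic and so is $\kap_t=\log\fmu_t(T)$. Continuity of $t\mapsto\kap_t$ follows from the factorization $\fmu_s(T)=\fmu_s(t)\fmu_t(T)$, which is \eqref{eq:chen_identity} combined with independence, together with \Cref{thm:bdg_signature}, forcing $\fmu_s(t)\to1$ as $s\to t$. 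Consequently the continuous martingale part $\kap^c$ vanishes identically and $\Delta\kap_u=0$ for all $u$.

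With these two facts the functional equation \eqref{eq:master_with_jumps_h_form} collapses: the two bracket terms carrying $\kap^c$ (those built from $Q$ and from $\Id\odot G$) vanish, while in the jump sum continuity gives $\exp(\kap_u)\exp(-\kap_{u-})=1$ and the correction $\Delta\kap_u$ drops, leaving $H(\ad{\kap_u})(\exp(\Delta X_u)-1-\Delta X_u)$. Writing $\Phi_u\coloneq H(\ad{\kap_u})$, a deterministic linear operator, I insert the canonical representation \eqref{eq:levy_decomp} into the remaining integral $\int_{(t,T]}\Phi_u(\dd X_u)$. Under $\E_t$ the continuous martingale part $\int\sigma\,\dd B$ and the compensated small-jump integral contribute vanishing martingale increments; the drift yields $\int_t^T\Phi_u(b(u))\,\dd u$, and, since $\dd\QV{X^c}_u=a(u)\,\dd u$, the angle-bracket term yields $\tfrac12\int_t^T\Phi_u(a(u))\,\dd u$. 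The large-jump integral and the reduced jump sum are handled by replacing $\mu^X$ by the deterministic compensator $\nu(\dd u,\dd x)=K_u(\dd x)\,\dd u$ under $\E_t$; using $x\xgone+(\exp(x)-1-x)=\exp(x)-1-x\xlone$ and pulling $\Phi_u$ through the $K_u$-integral level by level, by \eqref{eq:projection_adjoint_powerseries}, identifies their sum with $\int_t^T\Phi_u\big(\int_{\R^d}(\exp(x)-1-x\xlone)K_u(\dd x)\big)\,\dd u$. Adding the three contributions gives $\kap_t=\int_t^T H(\ad{\kap_u})(\etfrak(u))\,\dd u$ with $\etfrak$ as in \eqref{def:levy_kintchin_exponent}.

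I expect the main obstacle to lie in the interchange and convergence arguments underlying the last step: verifying that the compensated small-jump integral is a genuine $\F_t$-martingale increment, that the $K_u$-integral of $\Phi_u(\exp(x)-1-x\xlone)$ converges on each tensor level---where the moment condition \eqref{cond:moment_condition_levy_measure} must dominate the factorial growth of $\exp(x)$---and that the Fubini step commuting $\Phi_u$ with the $K_u$-integral is justified uniformly enough to then integrate in $u$.
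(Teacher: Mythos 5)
Your proposal is correct, but its logical architecture differs from the paper's. The paper proceeds by \emph{verification plus uniqueness}: after establishing $X\in\HSe^{N}$ (via an inductive moment bound on $V_T^n$ for $V=(|x|\xgone)\ast\mu^X$ and a compensator estimate for $[M]_T$), it \emph{defines} $\wt\kap$ as the solution of \eqref{eq:inhom_levy_kintchin} -- well posed because the right-hand side at tensor level $n$ only involves lower levels -- observes that this deterministic, absolutely continuous candidate satisfies the functional equation \eqref{eq:master_with_jumps_h_form}, and concludes $\kap=\wt\kap$ from the uniqueness clause of \Cref{thm:main_with_jumps}. You instead \emph{derive} the equation forward: you first prove that $\kap$ is deterministic (independence of increments) and continuous (Chen factorization $\fmu_s(T)=\fmu_s(t)\fmu_t(T)$ plus absence of fixed jump times, since $\nu(\dd t,\dd x)=K_t(\dd x)\,\dd t$), then collapse \eqref{eq:master_with_jumps_h_form} term by term and compensate $\mu^X$ by $\nu$, with the identity $x\xgone+(\exp(x)-1-x)=\exp(x)-1-x\xlone$ merging the large-jump and Marcus-correction contributions. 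Your route costs an extra a priori regularity argument for $\kap$ that the paper's uniqueness trick sidesteps entirely, but it buys a genuine explanation of \emph{why} the equation takes this form -- which the paper's ``easy to see'' verification of $\wt\kap$ would in any case have to reproduce implicitly. Two minor points to tighten: the continuity claim should explicitly invoke that $X$ has no fixed times of discontinuity (this is exactly where the general It\^o-semimartingale case, discussed in the example preceding the corollary, fails); and the integrability needed to make the compensated small-jump integral a true martingale and to justify the Fubini step with $K_u$ is precisely where the paper's careful $\HSe^{N}$ estimates are consumed, so your closing paragraph correctly locates the real work.
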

\begin{remark} Corollary \ref{cor:inhom_levy} extends a main result of \cite{friz2017general}, where a L\'evy-Kintchin type formula was obtained for the expected signature of  L\'evy processes with triplet $(b,a,K)$. Now this is an immediate consequence of \eqref{eq:inhom_levy_kintchin}, with all commutators vanishing in time-homogeneous case, and explicit solution
\begin{align*}
\kap_t(T) = (T-t)\left(b+\frac{1}{2}a + \int_{\Rd}(\exp(x)-1-x\xlone) K(\dd x)\right).
\end{align*}
\end{remark}

\begin{proof}
Assume that the L\'evy measures $\{ K_t \}_{t>0}$ satisfy the condition \eqref{cond:moment_condition_levy_measure} for some $N\in\None$.
We will first show that $X\in\HSe^{N}(\Rd)$.
Note that the decomposition \eqref{eq:levy_decomp} naturally yields a semimartingale decomposition $X = M + A$, where the local martingale $M$ and the adapted bounded variation process $A$ are defined by
\begin{align*}
     M = \int_0^\cdot\sigma(u)\,\dd B_u + (x\indik_{\abs{x}\le    1})\ast(\mu^X - \nu),
     \qquad A = \int_0^\cdot b(u)\,\dd u + (x\indik_{\abs{x}>1})\ast\mu^X.
\end{align*}

Regarding the integrability of the $1$-variation of $A$ we have first note that it holds
\begin{align*}
    \abs{A}_{1-\var;[0,T]} = \int_0^T|b(u)|\,\dd u + (\abs{x}\mathbf{1}_{\abs{x}>1}) \ast \mu^X_T.
\end{align*}
Define the increasing, piecewise constant process $V \coloneq (\abs{x}\mathbf{1}_{\abs{x}>1}) \ast \mu^X$.
Since $b$ is deterministic and integrable over the interval $[0,T]$ it suffices to show that $V_T$ has finite $N$th moment.
To this end, note that it holds
\begin{align*}
\E(V_T) = \int_0^T \int_{\abs{x}> 1} \abs{x} K_t(\dd x) \dd t <\infty.
\end{align*}
Further it holds for any $n \in \{1, \dotsc,N\}$ that
\begin{align*}
V_T^n = \sum_{0 < t \le T}\left(V_t^n - V_{t-}^n\right)
= \sum_{0 < t \le T}\sum_{k=0}^{n-1} \binom{n}{k} V_{s-}^k(\Delta V_{s})^{n-k}
\end{align*}
and by definition $\Delta V_t = |\Delta X_t|\indik_{\abs{\Delta X_t}>1}$.
Now let $n = 2$ and $k \in \{0, \dotsc, n-1\}$ then we have
\begin{align*}
\E\left( \sum_{0 < t \le T} V_{s-}^k (\Delta V_{s})^{n-k} \right)
&= \E\left( \int_{0}^T \int_{\abs{x}> 1} V_{s-}^k \abs{x}^{n-k} K_t(\dd x) \dd t\right) \\
&\le  \E\left(V_{T}^k\right) \int_{0}^T \int_{\abs{x}> 1}  \abs{x}^{n-k} K_t(\dd x) \dd t < \infty.
\end{align*}
It then follows inductively that $\E(V_T^n)$ is finite for all $n = 1, \dotsc, N$ and hence that the
1-variation of $A$ has finite $N$-th moment.

Concerning the integrability of the quadratic variation of $M$, let $w\in\{1, \dots, d\}$, then it is well known that (see e.g. \cite[Ch. II Theorem 1.33]{jacod2003limit})
\begin{align*}
\QV{(x^w\indik_{\abs{x}\le 1}) \ast (\mu^X- \nu)}_T =(x^w)^2\indik_{\abs{x}\le1} * \nu_T,
\end{align*}
where $\QV{M}$ denotes the dual predictable projection (or compensator) of $\GQV{M}$.
Further using that the compensated martingale $(x\mathbf{1}_{\abs{x}\le1})\ast(\mu^X - \nu)$ is orthogonal to continuous martingales, we have
\begin{align*}
   \QV{M^w}_T = \int_0^T a^{ww}(t) \dd t + \int_0^T \int_{\abs{x}\le 1}(x^w)^2 K_t(\dd x) \dd t < \infty.
\end{align*}
Now let $q \in [1, \infty)$, then from Theorem 8.2.20 in \cite{cohen2015stochastic} we have the following estimation
\begin{align*}
    \E\left(\GQV{M^w}^q_T\right) \le c\, \E\left(\QV{M^w}_T^q + \sup_{0 \le t \le T}(\Delta M^w_t)^{2q}\right) \le c \left( \QV{M^w}^q_T + 1 \right) <\infty,
\end{align*}
where $c>0$ is a constant depending on $q$.

We have shown that $\bX = (0, X, 0, \dotsc, 0) \in\HSehom^{1,N}$ and it follows from \Cref{thm:main_with_jumps} that the signature cumulant $\kap_t = \log(\E_t(\Sig(\bX)_{t,T}))$ satisfies the functional equation \eqref{eq:master_with_jumps_h_form}.
On the other hand, it follows from the condition \eqref{cond:moment_condition_levy_measure} that $\mathfrak{y}$ in \eqref{def:levy_kintchin_exponent} is well defined.
Now define $\wt{\kap} = (\wt\kap_t)_{0 \le t \le T}$ by the identity \eqref{eq:inhom_levy_kintchin}.
Noting that $\wt\kap$ is deterministic and has absolutely continuous components it is easy to see that $\wt\kap$ also satisfies the functional equation \eqref{eq:master_with_jumps_h_form} for the semimartingale $X$.
It thus follows that $\kap$ and $\wt\kap$ are identical.
\end{proof}

\subsubsection{Markov jump diffusions}
The generator of a general Markov jump diffusion $X$ is given by
\begin{align}\label{eq:diffusion_generator}
  \mathcal{L}f(x) = \sum_{i}b^{i}(x)\partial_i f(x) + \sum_{i,j}a^{ij}(x)\partial_{i}\partial_j f(x)+ \int_{\Rd}\bigg( f(x+y)-f(x)-\mathbf{1}_{\abs{y}\le 1} \sum_{i}y^{i}\partial_i f(x)\bigg)K(x, \dd y),
\end{align}
where the summations are over $i,j\in\{1, \dots, d\}$, $\bb\colon \Rd \to \Rd$ and $\ba\colon \Rd \to \Rd \otimes \Rd$ (symmetric, positive definite)  are bounded Lipschitz, $K$ is a Borel transition kernel from $\Rd$ into $\Rd$ with $K(\cdot, \{0\}) \equiv 0$ and
\begin{align*}
\sup_{x\in\Rd}\int_{\Rd}(\abs{y}^{2}\wedge 1)K(x,\dd y) <\infty.
\end{align*}
Note that (the law of) $X$ is the unique solution to the martingale problem associated to $\Lcal$.
That said, the extensions to Markov processes with differential characteristics
$(\bb(t,x),\ba(t,x),K(t,x,\dd y))$, with associated local L\'evy generators \cite{stroock1975diffusion} is mostly notational.
For the construction of general jump diffusions and their semimartingale characteristics see e.g. \cite[Ch. III.2.c]{jacod2003limit} and \cite[XIII.3]{jacod1979calcul}.

The expected signature of $X$ was seen in \cite{friz2017general} (in \cite{ni2012expected} for the continuous case) to satisfy a system of (linear) partial integro-differential equations (PIDEs). Passage to signatures cumulants amounts to take the logarithm, which represents a non-commutative Cole--Hopf transform, with resulting quadratic non-linearity, if viewed as $\TT_1$-valued PIDE, resolved thanks to the graded structure so that again a system of (linear) PIDEs arises.
In the proof of the following corollary we will show how this PIDE can be derived from \Cref{thm:main_with_jumps}.
\begin{corollary}
Let $X$ be a $d$-dimensional Markov diffusion with generator given by \eqref{eq:diffusion_generator}, where the transition kernel $K$ have uniformly bounded moments of all orders, i.e.
\begin{align}\label{eq:markov_kernel_moment_condition}
\sup_{x\in\Rd} \left(\int_\Rd \abs{y}^n K(x, \dd y) \right)< \infty, \quad n \in\N.
\end{align}
Then $X\in\HSe^{\infty-}$ and the signature cumulant is of the form
\begin{align*}
\kap_t(T) = \bv(t, X_t; T) = \bv(t,X_t),
\end{align*}
where $\bv=\sum_{w} \bv^{w} e_w$ is the unique solution with $\bv^{w} \in C^{1,2}_b([0,T] \times \Rd; \R)$ for all $w\in\W_d$ of the following partial integro-differential equation
\begin{equation}\label{eq:jmp_diff_PIDE}
  -[\partial_t + \Lcal]\bv = \begin{multlined}[t]H(\ad \bv)\bigg\{\bb + \frac12\ba + \frac12\sum_{i,j} \ba^{ij}Q(\ad \bv)(\partial_i \bv \otimes \partial_j \bv) +  \sum_{i,j} \ba^{ij}e_j G(\ad \bv)\partial_i \bv \bigg\}\\
+ \int_{\Rd} \bigg\{H(\ad \bv)\Big(\exp(y)\exp(\bv\circ \tau_y)\exp(-\bv) - 1 -
\mathbf{1}_{\abs{y}\le1}y\Big) - \big( \bv \circ \tau_y - \bv \big) \bigg\}K(\cdot, \dd
y),\end{multlined}
\end{equation}
on $[0,T]\times\Rd$ with terminal condition $\bv(T, \cdot) \equiv 0$, where $\tau_y(t, x) = (t, x + y)$.
\begin{proof}
First note that $X$ has the semimartingale characteristics $(B, C, \nu)$ where (see \cite[XIII.3]{jacod1979calcul})
\begin{align*}
\dd B_t = \bb(X_{t-}) \dd t, \quad \dd C_t =\ba(X_{t-})\dd t, \quad \nu(\dd t, \dd x) = \dd t K(X_{t-}, \dd x),
\end{align*}
with respect to the truncation function $h(x)=\mathbf{1}_{\abs{x} \le 1}$.
Further denote by $\mu^{X}$ the random measure associated with the jumps of $X$ and recall the canonical representation \eqref{eq:representation_from_characteristics}.
We can easily verify that the boundedness of $\bb$ and $\ba$, and the moment condition \eqref{eq:markov_kernel_moment_condition} implies that $\bX = (0, X, 0, \dots) \in \HSe^{\infty -}$ (compare also with the proof of \Cref{cor:inhom_levy}).
It then follows from \Cref{thm:main_with_jumps} that $\kap(T) = (\E_t(\Sig(\bX)_{t,T}))_{0 \le t \le T}$ is the unique solution to the functional equation \eqref{eq:master_with_jumps_h_form}.

Now assume that $\bv$ is the (unique) solution to the above PIDE with $\bv^{w}\in
C^{1,2}_b([0,T]\times\Rd;\R)$ for all $w\in\W_d$ (this is really an infinite-dimensional system of
linear PIDEs, solved inductively upon projection to the linear span of $e_w$ with $|w| \le \ell$, for $\ell \in \None$, see that standard results as found in \cite[Section 12.2]{cont2004financial} and references therein apply).
Then define $\tkap\in\Se(\tf)$ by $\tkap_t \coloneq \bv(t,X_t)$ for all $0 \le t \le T$ and note that $\tkap_{t-} = \bv(t, X_{t-})$.
We are going to show that also $\tkap$ also satisfies the functional equation \eqref{eq:master_with_jumps_h_form}.

Since $X$ solves the martingale problem with generator $\Lcal$ and $\bv$ is sufficiently regular it holds
\begin{align}\label{eq:mtg_prob_on_v}
\tkap_t = -\E_t\big(\bv(T, X_T)- \bv(t, X_t) \big) = \E_t\bigg(-\int_t^{T}[\partial_t + \Lcal] \bv (u,X_{u-}) \dd u \bigg).
\end{align}
On the other hand, we can plug in $\tkap$ into the right-hand side of \eqref{eq:master_with_jumps_h_form}.
We then obtain for the first integral inside the conditional expectation
\begin{align*}
\int_{(0,t]}H(\ad \tkap_{u-})(\dd\bX_u)
= \int_{(0,t]}H(\ad \tkap_{u-})(\dd B_u + \dd X^{c}_u) + \bW\ast(\mu^{X} - \nu)_t + \ol \bW\ast\mu^{X}_t,
\end{align*}
where
\begin{equation*}
\bW_t(y) \coloneq H(\ad \tkap_{t-})(h(y)), \quad\text{and}\quad \ol \bW_t(y) \coloneq H(\ad \tkap_{t-})(y - h(y)),
\end{equation*}
for all $0 \le t \le T$ and $y\in\Rd$.
Similarly we have
\begin{align*}
\sum_{0 < u \le t}\bigg\{H(\ad{\tkap_{u-}})\Big(\exp(\Delta \bX_u)\exp(\tkap_u)\exp(-\tkap_{u-}) - 1 -\Delta \bX_u\Big) - \Delta \tkap_u \bigg\} = \bJ \ast \mu^{X}_t,
\end{align*}
where $0 \le t \le T$ and $y\in\Rd$
\begin{align*}
\bJ_t(y) \coloneq \bigg\{H(\ad\bv)\Big(\exp(y)\exp(\bv\circ\tau_y)\exp(-\bv) - 1 - y\Big) - (\bv\circ\tau_y - \bv) \bigg\}(t, X_{t-}).
\end{align*}
Finally for the quadratic variation terms with respect to continuous parts we have
\begin{align*}
\bU_t &\coloneqq \int_0^{t}H(\ad \tkap_{u-})\Big\{\dd\QV{\bX}_u + \big(\Id\odot G(\ad \tkap_{u-})\big)\big(\dd \outerbracket{\bX^{c}}{\tkap^{c}}_u\big)  + Q(\ad \tkap_{u-})\big(\dd \outerbracket{\tkap^{c}}{\tkap^{c}}_u\big)\Big\} \\
&= \sum_{i,j}\int_0^{t}\ba^{ij}H(\ad \bv)\bigg\{e_{ij} + (\Id\odot G(\ad \bv))(e_i\otimes\partial_j \bv) + Q(\ad \bv)(\partial_i \bv \otimes\partial_j\bv) \bigg\} (t,X_{u-}) \dd u \\
&\eqqcolon \int_{0}^{t} H(\ad \bv)(\bu(u, X_{u-}))\dd u.
\end{align*}
Provided that we can show the following integrability property holds for all words $w\in\W_d$
\begin{equation}\label{eq:proof_diff_integrability}
 \E\bigg( \int_0^{T}\abs{\big\{H(\ad \tkap_{u-})(\bb(X_{u-}) + \bu(u, X_{u-})) \big\}^{w}}\dd u + \big(|\bW^w|^2 + |\ol \bW^w| + |\bJ^w|\big)\ast\nu_T\bigg) < \infty,
\end{equation}
it follows that
\begin{align*}
\MoveEqLeft\E_t\bigg\{\int_{(t,T]}H(\ad \tkap_{u-})(\dd\bX_u) + \bU_{t,T} +  \bJ \ast \mu^{X}_{t,T}\bigg\} \\
&= \E_t\bigg\{ \int_t^{T} H(\ad \tkap_{u-})(\dd B_u) +  \bU_{t,T} +  (\bJ - \ol \bW) \ast \nu_{t,T}\bigg\} \\
&= \E_t\bigg\{ \int_{(t,T]}\bigg( H(\ad \bv)\big(\bb(X_{u-}) + \bu(u, X_{u-})\big) + \int_{\Rd}(\bJ_{u}(y) - \ol\bW_{u}(y)) K(X_{u-}, \dd y)\bigg) \dd u\bigg\} \\
&= \E_t\bigg(-\int_t^{T}[\partial_t + \Lcal] \bv (u,X_{u-}) \dd u \bigg).
\end{align*}
where in the last line we have used $\bv$ satisfies the PIDE.
Since the above left-hand side is precisely the right-hand side of the functional equation \eqref{eq:master_with_jumps_h_form}, it follows together with \eqref{eq:mtg_prob_on_v} that $\tkap$ satisfies the functional equation \eqref{eq:master_with_jumps_h_form}.

Note that in case the integrability condition  \eqref{eq:proof_diff_integrability} is satisfied for all words $w\in\W_d$  with $\abs{w} \le n$ for some length $n\in\None$ it follows that the above equality holds up to the projection with $\pi_{(0,n)}$.
For words with $\abs{w} = 1$ the condition \eqref{eq:proof_diff_integrability} is an immediate consequence of $\bX\in\HSe^{\infty-}$.
It then follows inductively, by the same arguments as in the proof of \Cref{claim:proof_L_estimate} that \eqref{eq:proof_diff_integrability} is indeed satisfied for all words $w\in\W_d$.

Since $\kap(T)$ is the unique solution to \eqref{eq:master_with_jumps_h_form} it then follows that $\tkap \equiv \kap(T)$.

\end{proof}
\end{corollary}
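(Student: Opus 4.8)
The plan is to combine an integrability check, standard well-posedness of the linear system of PIDEs, and a verification (Feynman--Kac) argument that leans on the uniqueness already established in \Cref{thm:main_with_jumps}. First I would verify that $\bX=(0,X,0,\dots)\in\HSe^{\infty-}$. Since $\bb$ and $\ba$ are bounded and the kernel $K$ has uniformly bounded moments of all orders, this follows exactly as in the time-inhomogeneous L\'evy case treated in \Cref{cor:inhom_levy}: split $X$ into its local martingale and finite-variation parts along the canonical representation \eqref{eq:representation_from_characteristics}, control the $1$-variation of the drift and large-jump parts by the moment bounds on $K$, and control the quadratic variation of the martingale part by boundedness of $\ba$ together with the compensator formula for the compensated small-jump integral. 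With $\bX\in\HSe^{\infty-}$ in hand, \Cref{thm:main_with_jumps} guarantees that the signature cumulant $\kap(T)$ exists and is the \emph{unique} (up to indistinguishability) solution of the functional equation \eqref{eq:master_with_jumps_h_form}.

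Next I would address the PIDE \eqref{eq:jmp_diff_PIDE} itself. Because of the graded, locally finite structure, the right-hand side projected to words of length $n$ depends only on the components $\bv^{w}$ with $|w|<n$; hence the infinite system decouples into a triangular tower of \emph{linear} PIDEs with bounded coefficients and source terms, to be solved inductively on the tensor level. At each level, standard existence, uniqueness, and regularity results for linear parabolic PIDEs (terminal value problem with generator $\Lcal$) yield a solution $\bv^{w}\in C^{1,2}_b([0,T]\times\Rd;\R)$ with terminal condition $\bv(T,\cdot)\equiv0$.

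The core of the argument is the verification step. Setting $\tkap_t\coloneq\bv(t,X_t)$, on the one hand the martingale problem for $\Lcal$ and It\^o's formula for the $C^{1,2}_b$-function $\bv$ give the Feynman--Kac representation $\tkap_t=-\E_t\big(\int_t^T[\partial_t+\Lcal]\bv(u,X_{u-})\,\dd u\big)$. On the other hand, I would substitute $\tkap$ into the right-hand side of \eqref{eq:master_with_jumps_h_form} and reduce every term to either a $\dd u$-integral or a $\mu^{X}$-integral using the characteristics $(\bb\,\dd t,\ba\,\dd t,K\,\dd t)$: the leading $H(\ad{\tkap_{u-}})(\dd\bX_u)$ term splits into drift, continuous-martingale, small- and large-jump contributions; the three bracket terms are evaluated by It\^o's formula for the continuous martingale part of $\tkap$, whose quadratic (co)variations produce the factors $\partial_i\bv\otimes\partial_j\bv$ and $e_i\otimes\partial_j\bv$ against $\ba^{ij}\,\dd u$; and the jump sum collapses into $\bJ\ast\mu^{X}$ for the explicit jump functional $\bJ$ appearing in \eqref{eq:jmp_diff_PIDE}. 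Taking $\E_t$, compensating $\mu^{X}$ by $\nu=K\,\dd t$ and discarding the (true) martingale parts, the surviving drift is precisely $-[\partial_t+\Lcal]\bv$ by the PIDE, so $\tkap$ satisfies \eqref{eq:master_with_jumps_h_form}. Uniqueness from \Cref{thm:main_with_jumps} then forces $\tkap\equiv\kap(T)$.

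The step I expect to be the main obstacle is \emph{not} the algebraic matching but the integrability bookkeeping \eqref{eq:proof_diff_integrability} needed to legitimise the two maneuvers above — replacing $\mu^{X}$ by its compensator under $\E_t$ and asserting that the continuous-martingale pieces are genuine martingales with vanishing conditional expectation. These must be checked componentwise for every word $w$, and the natural way to do so is by induction on $|w|$: the case $|w|=1$ is immediate from $\bX\in\HSe^{\infty-}$, and the inductive step bootstraps exactly as in the integrability estimate underlying \Cref{thm:main_with_jumps} (the $\HSehom^{1,N}$-type bounds), using that the length-$n$ data depend only on strictly lower levels already known to be sufficiently integrable. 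Once this is secured, the Feynman--Kac identity and the substituted functional equation coincide, and the result follows by induction together with uniqueness.
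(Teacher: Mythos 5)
Your proposal is correct and follows essentially the same route as the paper: establish $\bX\in\HSe^{\infty-}$ as in the L\'evy case, solve the triangular linear PIDE tower inductively, verify that $\tkap_t=\bv(t,X_t)$ satisfies the functional equation \eqref{eq:master_with_jumps_h_form} by matching the Feynman--Kac representation from the martingale problem against the term-by-term evaluation via the characteristics, justify the compensation step by an inductive integrability check over word length, and conclude by the uniqueness in \Cref{thm:main_with_jumps}. No gaps to flag.
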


\subsection{Affine Volterra processes} \label{sec:AVp}
For $i =1, 2$ let $K^{i}$ be an integration kernel such that $K^{i}(t, \cdot) \in L^{2}([0, t])$ for all $0 \le t \le T$ and let $V^{i}$ be the solution to the Volterra integral equation
\begin{align*}
V^{i}_t = V^{i}_0 + \int_0^{t} K^{i}(t,s)\sqrt{V^{i}_s}\dd W^{i}_s, \quad 0 \le t\le T,
\end{align*}
with $V^{i}_0 > 0$, where $W^{1}$ and $W^{2}$ are uncorrelated standard Brownian motions which generate the filtration $(\F_t)_{0 \le t \le T}$.
Note that in general $V^{i}$ is not a semimartingale.
In particular this is not the case when $K^{i}$ is a power-law kernel of the form $K(t,s) \sim (t-s)^{H-1/2}$ for some $H\in (0, 1/2)$, which is the prototype of a \emph{rough affine volatility model} (see e.g. \cite{keller2018affinerough}).
However, a martingale $\xi^{i}(T)$ is naturally  associated to $V^{i}$ by
$$\xi^{i}_t(T) = \E_t(V^{i}_T), \quad 0 \le t \le T.$$
In the financial context, $\xi^{i}(T)$ is the central object of a \emph{forward variance model} (see e.g. \cite{gatheral2019affine}).
It was seen in \cite{friz2020cumulants} that the iterated diamond products of $\xi^{1}(T)$ are of a particularly simple form and easily translated to a system of convolutional Riccati equations of the type studied in \cite{abijaber2019affine}, \cite{gatheral2019affine} for the cumulant generating function.
We are interested in the signature cumulant of the two dimensional martingale $X = (\xi^{1}(T), \xi^{2}(T))$.

\begin{corollary}
It holds that $\bX = (0,\, \xi^{1}(T)e_1 + \xi^{2}(T)e_2,\, 0,\, \dots)\in\HSe^{\infty-}$ and the signature cumulant $\kap_t(T) = \log \E_t(\Sig(\bX)_{t,T})$ is the unique solution to the functional equation: for all $0 \le t \le T$
\begin{align*}
\kap_t(T) = -\E_t\Bigg(\frac{1}{2}\sum_{i=1,2}\int_t^{T} H(\ad\kap_u)(e_{ii})K^{i}(T,u)^{2}V_u^{i} \dd u + \frac{1}{2}\int_t^{T}H(\ad \kap_u)\circ Q(\ad\kap_u)(\dd (\kap \blackdiamond \kap)_u(T))& \\
+\sum_{i=1,2} \int_t^{T}H(\ad\kap_u)\left\{e_i G(\ad\kap_u)\big(\dd (\xi^{i}(T) \diamond \kap)_u(T)\big) \right\} &\Bigg).
\end{align*}
\end{corollary}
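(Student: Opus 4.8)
The plan is to recognise $\bX=(0,X,0,\dotsc)$, with $X=\xi^1(T)e_1+\xi^2(T)e_2$, as a \emph{continuous martingale} in $\Se(\tf)$ supported on the first tensor level, and to feed it into \Cref{thm:main_with_jumps}, converting the remaining quadratic covariation brackets into diamonds via \Cref{lem:dm}. First I would record the explicit martingale representation of the forward variances: since $V^i_T=V^i_0+\int_0^T K^i(T,s)\sqrt{V^i_s}\,\dd W^i_s$ and the integrand is adapted, conditioning gives
\begin{equation*}
\xi^i_t(T)=\E_t(V^i_T)=V^i_0+\int_0^t K^i(T,s)\sqrt{V^i_s}\,\dd W^i_s ,
\end{equation*}
so each $\xi^i(T)$ is a continuous martingale with $\dd\QV{\xi^i(T)}_u=K^i(T,u)^2V^i_u\,\dd u$, and, because $W^1$ and $W^2$ are uncorrelated, $\CV{\xi^1(T)}{\xi^2(T)}\equiv0$. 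Hence the inner bracket of $\bX$ is diagonal,
\begin{equation*}
\dd\QV{\bX^c}_u=\sum_{i=1,2}e_{ii}\,K^i(T,u)^2V^i_u\,\dd u ,
\end{equation*}
which is exactly the data entering the first term of the asserted equation.

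The main obstacle is integrability: to invoke \Cref{thm:main_with_jumps} I must check $\bX\in\HSe^{\infty-}(\tf)$, and since $\bX$ carries only a first tensor level this reduces to $\xi^i(T)\in\HSe^q$ for all $q<\infty$, i.e. to finiteness of all moments of $\QV{\xi^i(T)}_T=\int_0^T K^i(T,s)^2V^i_s\,\dd s$. By Minkowski's integral inequality $\Abs{\QV{\xi^i(T)}_T}_{\Lcal^q}\le\bigl(\sup_{s\le T}\Abs{V^i_s}_{\Lcal^q}\bigr)\int_0^T K^i(T,s)^2\,\dd s$, so everything hinges on uniform-in-time moment bounds for the Volterra square-root process $V^i$. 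These are the genuinely non-trivial input. I would obtain them from the affine/polynomial structure of $V^i$: applying a Burkholder--Davis--Gundy estimate to the (fixed-time) stochastic integral and Minkowski's inequality yields a linear Volterra inequality for $m_q(t)\coloneq\E[(V^i_t)^q]$ in terms of lower moments, which closes by a Gronwall-type argument, as established for affine Volterra processes in \cite{abijaber2019affine}. Granting this, \Cref{thm:bdg_signature} gives $\Sig(\bX)_{0,\cdot}\in\HSe^{\infty-}(\TT_1)$, so the expected signature and hence $\kap(T)$ are well defined.

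With integrability in hand I would specialise \eqref{eq:master_with_jumps_h_form} to the present setting. Three simplifications occur. Since the filtration is generated by the Brownian motions, all its martingales are continuous; in particular $u\mapsto\E_u(\Sig(\bX)_{0,T})$, and therefore $\kap(T)$, is continuous, so $\kap_{u-}=\kap_u$ and the jump sum vanishes. Next, $\int_{(t,T]}H(\ad{\kap_u})(\dd\bX_u)$ is a genuine martingale increment under the integrability ensured above, hence drops out under $\E_t$. Finally $\QV{\bX^c}$ is replaced by the diagonal expression above, giving the first term. For the two remaining bracket terms I would apply \Cref{lem:dm} in its outer-diamond form, namely
\begin{equation*}
\E_t\Big(\int_t^T H(\ad{\kap_u})\circ Q(\ad{\kap_u})(\dd\outerbracket{\kap}{\kap}_u)\Big)=-\E_t\Big(\int_t^T H(\ad{\kap_u})\circ Q(\ad{\kap_u})(\dd(\kap\blackdiamond\kap)_u(T))\Big),
\end{equation*}
and, using that $\bX$ lives on the first level so that $\outerbracket{\bX}{\kap}=\sum_i e_i\otimes[\xi^i(T),\kap]$, the operator $\Id\odot G(\ad{\kap_u})$ turns the cross term into $\sum_i e_i\,G(\ad{\kap_u})(\dd(\xi^i(T)\diamond\kap)_u(T))$, again with a sign flip from \Cref{lem:dm}. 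Collecting the three contributions produces the displayed functional equation. Uniqueness is inherited from \Cref{thm:main_with_jumps}: projecting to tensor level $n$ makes the right-hand side depend only on $\kap^{(1)},\dotsc,\kap^{(n-1)}$, so the solution is pinned down level by level, up to indistinguishability by right-continuity.
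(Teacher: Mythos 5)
Your proposal is correct and follows essentially the same route as the paper's proof: reduce integrability to all-order moments of $V^i$ (citing \cite{abijaber2019affine}), apply \Cref{thm:main_with_jumps}, drop the jump sum by continuity of martingales in the Brownian filtration and the It\^o integral by martingality, compute the diagonal brackets of $\xi^i(T)$ from the explicit representation $\xi^i_t(T)=V^i_0+\int_0^tK^i(T,s)\sqrt{V^i_s}\,\dd W^i_s$, and convert the remaining brackets to diamonds via \Cref{lem:dm}. Your handling of the integrability step (Minkowski's integral inequality plus uniform-in-time moment bounds for $V^i$) is slightly more explicit than the paper's one-line reduction to moments of $V^i_T$, but the substance is identical.
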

\begin{proof}
Regarding the integrability statement it suffices to check that $V^{i}_T$ has moments of all order for $i=1, 2$.
This is indeed the case and we refer to \cite[Lemma 3.1]{abijaber2019affine} for a proof.
Hence we can apply \Cref{thm:main_with_jumps} and we see that $\kap$ satisfies the functional equation \eqref{eq:master_with_jumps_h_form}.
As described in \Cref{sec:master_equation} this equation can be reformulated with brackets replaced by diamonds.
Further note that, due to the continuity, jump terms vanish and, due to the martingality, the Itô integrals with respect to $\bX$ have zero expectation.
The final step to arrive at the above form of the functional equation is to calculated the brackets $\langle \xi^{i}(T), \xi^{j}(T) \rangle$.
From the definition $\xi^{i}(T)$ and $V^{i}$ we have for all $0 \le t \le T$
\begin{align*}
\xi^{i}_t(T) &= \E_t \left( V^{i}_0 +\int_0^{t}K^{i}(T,s)\sqrt{V^{i}_s}  \dd W^{i}_s+  \int_t^{T}K^{i}(t,s)\sqrt{V^{i}_s}  \dd W^{i}_s\right) = V^{i}_0 + \int_0^{t}K^{i}(T,s)\sqrt{V^{i}_s}  \dd W^{i}_s.
\end{align*}
Therefore and due to the independence we have $\langle \xi^{1}(T), \xi^{2}(T) \rangle = 0$  and for the square bracket we have
$\dd \langle \xi^{i}(T), \xi^{i}(T)\rangle_t = K^{i}(T,t)^{2}V^{i}_t \dd t$.
\end{proof}

The recursion for the signature cumulants from \Cref{cor:recursion_h_form} are easily simplified in analogy to the above corollary.
In the rest of this section we are going to demonstrate explicit  calculations for the first four levels.
Clearly, due to the martingality the first level signature cumulants are identically zero $\kap^{(1)}(T) \equiv 0$.
In the second level we start to observe the type of simplifications that appear due to the affine structure
\begin{align*}
\kap^{(2)}_t(T) &= \frac{1}{2}\sum_{i=1,2}e_{ii}(\xi^{i}(T) \diamond \xi^{i}(T))_t(T) = \frac{1}{2}\sum_{i=1,2}e_{ii}\E_t\left( \int_t^{T}K^{i}(T,u)^{2}V_u^{i}\dd u\right) \\
&=\frac{1}{2}\sum_{i=1,2}e_{ii}  \int_t^{T}K^{i}(T,u)^{2} \xi^{i}_t(u) \dd u,
\end{align*}
where $\xi^{i}_t(u) = \E_t(V^{i}_u)$ for all $0 \le t \le u \le T$.
The third level is of the same form
\begin{align*}
\kap^{(3)}_t(T) &= \frac{1}{2}\sum_{i=1,2}e_{i}(\xi^{i}(T) \diamond \kap^{(2)}(T))_t(T) \\
&=\frac{1}{2}\sum_{i=1,2}e_{iii}\int_t^{T}\left(\int_u^{T}K^{i}(T,s)^{2}K^{i}(T, u)K^{i}(s,u) \dd s\right)\xi^{i}(u) \dd u,
\end{align*}
where we have used that for any suitable $h:[0, T]\to \R$ it holds for all $0 \le t \le T$
\begin{align*}
\int_t^{T}h(u)\xi^{i}_t(u)\dd u = \int_0^{T}h(u) V^{i}_0 \dd u - \int_0^{t}h(u)V^{i}_u\dd u + \int_0^{t}\left(\int_u^{T} h(s)K^{i}(s, u)\dd s \right)\sqrt{V_u^{i}}\dd W^{i}_u.
\end{align*}
The fourth level starts to reveal some of the structure that is not visible in the commutative setting
\begin{align*}
\kap^{(4)}_t(T) =& \sum_{i=1,2}\left\{\frac{1}{8}[e_{\bar i \bar i}, e_{ii}]\int_t^{T}\left(\int_u^{T}K^{\bar i}(T,s)\xi^{\bar i}\dd s\right) K^{i}(T, u)^{2} \xi_t^{i}(u) \dd u  + e_{iiii}\int_t^{T}h^{i}(T,u) \xi^{i}_t(u)\dd u\right\},
\end{align*}
where $\{i, \bar{i}\} = \{1,2\}$ and $h^{i}$ is defined by
\begin{align*}
h^{i}(T, u) =& \frac{1}{8}\left(\int_u^{T}K^{i}(T,s)^{2}K^{i}(u,s)\dd s\right)^{2} \\
&+ \frac{1}{2}\int_u^{T}\left(\int_s^{T}K^{i}(T,r)^{2}K^{i}(T, s)K^{i}(r,s) \dd r\right)K^{i}(T,s)K^{i}(s,u)ds, \quad 0 \le u \le T.
\end{align*}

\section{Proofs}
For ease of notation we introduce a norm on the space of tensor valued finite variation process, which could have been introduced already in \Cref{sec:tensor_semimartingales}, was however not needed until now.
Let $q\in[1,\infty)$ and $\bA\in\Fv((\Rd)^{\otimes n})$ for some $n\in\None$ then we define
\begin{align*}
\Abss{\bA}_{\Fv^{q}} \coloneqq \Abss{\bA}_{\Fv^{q}((\Rd)^{\otimes n})} \coloneqq \Abs{\abs{\bA}_{1-\var;[0,T]}}_{\Lcal^{q}}.
\end{align*}
It is easy to see that it holds $\Abss{\bA}_{\HSe^{q}} \le \Abss{\bA}_{\Fv^{q}}$ and this inequality can be strict.

Further for an element $\mathbb{A} \in \TT\otimes\TT$ we introduce the following notation
\[\mathbb{A}= \sum_{w_1, w_2 \in \W_d} \mathbb{A}^{w_1, w_2} \, e_{w_1}\!\otimes e_{w_2}, \quad \mathbb{A}^{w_1, w_2}\in\R,
\]
and for $l_1, l_2 \in \None$
\begin{align*}
    \mathbb{A}^{(l_1, l_2)} = \sum_{|w_1|=l_1, |w_2|=l_2} e_{w_1w_2}\otimes \mathbb{A}^{w_1,w_2} \in (\Rd)^{\otimes l_1} \otimes (\Rd)^{\otimes l_2} \subset \TT \otimes \TT.
\end{align*}

Next we will proof two well known lemmas translated to the setting of tensor valued semimartingales.
\begin{lemma}[Kunita-Watanabe inequality]\label{lem:cauchy_schwartz_tensor_bracket}
Let $\bX \in \Se((\Rd)^{\otimes n})$ and $\bY \in \Se((\Rd)^{\otimes n})$ then the following estimate holds a.s.
\begin{align*}
   \abs{\CV{\bX^c}{\bY^c}}_{1-\var;[0,T]} + \sum_{0<t\le T}\abs{\Delta \bX_t\Delta \bY_t} &\le \sum_{|w_1|=n}\sqrt{\GQV{\bX^{w_1}}_T}\sum_{|w_2|=m}\sqrt{\GQV{\bY^{w_2}}_T} \\
    &\le c \sqrt{\abs{\GQV{\bX}_T}}{\sqrt{\abs{\GQV{\bY}_T}}},
\end{align*}
where $c>0$ is a constant that only depends on $d$, $m$ and $n$.
\end{lemma}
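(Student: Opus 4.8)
The plan is to reduce everything to the classical scalar Kunita--Watanabe inequality applied componentwise, the only genuinely new bookkeeping being how the Euclidean norm interacts with the concatenation product and with passing to coordinates, and how the continuous and jump contributions recombine. The scalar ingredient I will use, for each pair of words $w_1,w_2$, is the split form: since $\int_0^T\abs{\dd\CV{U^c}{V^c}_s}\le \QV{U^c}_T^{1/2}\QV{V^c}_T^{1/2}$ (continuous Kunita--Watanabe) and $\sum_{0<s\le T}\abs{\Delta U_s\Delta V_s}\le(\sum_s(\Delta U_s)^2)^{1/2}(\sum_s(\Delta V_s)^2)^{1/2}$ (Cauchy--Schwarz over jump times), a further Cauchy--Schwarz in $\R^2$ yields, for scalar semimartingales $U,V$,
\[
\int_0^T\abs{\dd\CV{U^c}{V^c}_s}+\sum_{0<s\le T}\abs{\Delta U_s\Delta V_s}\le \GQV{U}_T^{1/2}\GQV{V}_T^{1/2},
\]
using $\QV{U^c}_T+\sum_s(\Delta U_s)^2=\GQV{U}_T$.

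For the first inequality I would first note that, the concatenation map $(w_1,w_2)\mapsto w_1w_2$ being injective on words of fixed lengths $n$ and $m$, the finite-variation process $\CV{\bX^c}{\bY^c}$ has coordinates $\CV{\bX^{w_1,c}}{\bY^{w_2,c}}$ indexed by $(w_1,w_2)$. Since the Euclidean norm is dominated by the $\ell^1$-norm of the coordinates, $\abs{\CV{\bX^c}{\bY^c}}_{1-\var;[0,T]}\le\sum_{w_1,w_2}\abs{\CV{\bX^{w_1,c}}{\bY^{w_2,c}}}_{1-\var;[0,T]}=\sum_{w_1,w_2}\int_0^T\abs{\dd\CV{\bX^{w_1,c}}{\bY^{w_2,c}}_s}$, the last equality because the predictable covariation of continuous martingales is of finite variation. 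For the jump term I use that the Euclidean norm is multiplicative under the concatenation (tensor) product, $\abs{\Delta\bX_t\Delta\bY_t}=\abs{\Delta\bX_t}\abs{\Delta\bY_t}\le\sum_{w_1,w_2}\abs{\Delta\bX_t^{w_1}\Delta\bY_t^{w_2}}$. Summing both bounds over $t$ and applying the displayed scalar estimate to every pair $(w_1,w_2)$ produces $\sum_{w_1,w_2}\GQV{\bX^{w_1}}_T^{1/2}\GQV{\bY^{w_2}}_T^{1/2}$, which factorizes as $\bigl(\sum_{|w_1|=n}\GQV{\bX^{w_1}}_T^{1/2}\bigr)\bigl(\sum_{|w_2|=m}\GQV{\bY^{w_2}}_T^{1/2}\bigr)$; this is precisely the middle expression.

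For the second inequality I would extract the diagonal of the inner bracket: since $\GQV{\bX}_T$ has the coordinate $\GQV{\bX}_T^{w_1w_1}=\GQV{\bX^{w_1}}_T$ for each $|w_1|=n$, one obtains the lower bound $\abs{\GQV{\bX}_T}^2\ge\sum_{|w_1|=n}\GQV{\bX^{w_1}}_T^2$. Combining this with the elementary power-mean inequality $\sum_{i=1}^N a_i^{1/4}\le N^{3/4}(\sum_i a_i)^{1/4}$ (concavity of $x\mapsto x^{1/4}$), applied with $a_{w_1}=\GQV{\bX^{w_1}}_T^2$ and $N=d^n$, gives $\sum_{|w_1|=n}\GQV{\bX^{w_1}}_T^{1/2}\le d^{3n/4}\abs{\GQV{\bX}_T}^{1/2}$, and symmetrically for $\bY$; their product yields the asserted bound with $c=d^{3(n+m)/4}$, depending only on $d,n,m$. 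The one point requiring care---and the step I regard as the crux---is exactly this second inequality: one must resist bounding $\abs{\GQV{\bX}_T}$ from above (which is what Kunita--Watanabe naturally supplies) and instead isolate the nonnegative diagonal entries to get a \emph{lower} bound, everything else being a routine reduction to the scalar case.
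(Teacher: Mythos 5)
Your proof is correct and follows essentially the same route as the paper's: a componentwise reduction to the scalar Kunita--Watanabe inequality, followed by elementary comparisons between the Euclidean norm, the $\ell^1$ norm of the coordinates, and the diagonal entries of $\GQV{\bX}$. If anything, your final step is the more careful one, since the paper's last displayed line omits the square roots on $\abs{\GQV{\bX}_T}$ and $\abs{\GQV{\bY}_T}$ that the statement requires, whereas your Hölder argument with exponents $4/3$ and $4$ lands exactly on the claimed bound with $c=d^{3(n+m)/4}$.
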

\begin{proof}
From the definition of the quadratic variation of tensor valued semimartingales in \Cref{sec:tensor_semimartingales} we have
\begin{align*}
    \abs{\CV{\bX^c}{\bY^c}}_{1-\var;[0,T]} + \sum_{0<s\le T}\abs{\Delta \bX_s\Delta \bY_s}
    &\le \sum_{|w_1|=n, |w_2|=m}\int_0^T\abs{\dd\CV{\bX^{w_1c}}{\bY^{w_2c}}_s} + \sum_{0<s\le T}\abs{\Delta \bX^{w_1}_s\Delta \bY^{w_2}_s} \\
    &\le \sum_{|w_1|=n, |w_2|=m}\sqrt{\GQV{\bX^{w_1}}_T}\sqrt{\GQV{\bY^{w_2}}_T} \\
    &\le d^{(n+m)/2} \sqrt{\sum_{|w_1|=n}\GQV{\bX^{w_1}}_T}\sqrt{\sum_{|w_2|=m}\GQV{\bY^{w_2}}_T} \\
    &\le d^{n+m}  \abs{\GQV{\bX}_T}\abs{\GQV{\bY}_T},
\end{align*}
where the first estimate follows form the triangle inequality, the second estimate from the (scalar) Kunita-Watanabe inequality \cite[Ch. II, Theorem 25]{protter2005stochastic}  and the last two estimates follow from the standard estimate between the $1$-norm and the $2$-norm on $(\Rd)^{\otimes m}\cong\R^{d^m}$.
\end{proof}

In order to proof the next well known lemma (Emery's inequality) we need the following technical
\begin{lemma}\label{lem:one-variation_tensor_integrals} Let $\bA\in\Fv((\Rd)^{\otimes n})$, $\bY\in\D((\Rd)^{\otimes l})$, $\bZ\in\D((\Rd)^{\otimes m})$ then it holds
\begin{align*}
\abs{\int_{(0,\cdot]}\bY_{s-} \dd\bA_s \bZ_{s-}}_{1-\var;[0,T]} \le \int_{(0,T]}\abs{\bY_{s-}\bZ_{s-}}\abs{\dd \bA_s}
\end{align*}
where the integration with respect to $\abs{\dd \bA}$ denotes the integration with respect to the increasing one-dimensional path $(\abs{\bA}_{1-\var;[0,t]})_{0 \le t \le T}$.
Further, let $\bY^{\prime}\in\D((\Rd)^{\otimes l^{\prime}})$, $\bZ^{\prime}\in\D((\Rd)^{\otimes m^{\prime}})$ and let $(\mathbb{A}_t)_{0 \le t \le T}$ be a process taking values in $(\Rd)^{\otimes n}\otimes (\Rd)^{\otimes n^{\prime}}$ such that $A^{w_1, w_2} \in \Fv$ for all $w_1, w_2\in\W_d$ with $|w_1|= n$ and $|w_2| = n^{\prime}$. Then it holds
\begin{align*}
\abs{\int_{(0,\cdot]} (\bY_{s-} \Id \bY^{\prime}_{s-})\odot (\bZ_{s-} \Id \bZ^{\prime}_{s-})(\dd\mathbb{A}_s)}_{1-\var;[0,T]} \le \int_{(0,T]}\abss{\bY\bY^{\prime}\bZ\bZ^{\prime}}_{s-}\abs{\dd m(\mathbb{A})_s},
\end{align*}
where $(\bY \Id \bY^{\prime})(\bA) = \bY\bA \bY^{\prime}$ is the left- respectively right-multiplication by $\bY$ respectively $\bY^{\prime}$.
\end{lemma}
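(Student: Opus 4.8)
The plan is to read both bounds as purely pathwise Lebesgue--Stieltjes estimates. For fixed $\omega$, the integrator $\bA$ (resp. $\mathbb A$) has finite variation and the integrand processes $\bY_{s-},\bZ_{s-}$ (resp. $\bY_{s-},\bY'_{s-},\bZ_{s-},\bZ'_{s-}$) are \cadlag, hence bounded and measurable on $[0,T]$; thus both integrals exist as ordinary Stieltjes integrals of a measurable family of linear maps $S_s$ against a finite vector-valued measure. For a generic such $C_\cdot=\int_{(0,\cdot]}S_s(\dd\bA_s)$ and any partition $0=t_0<\dots<t_k=T$, the triangle inequality together with the elementary bound $\abs{\int_{(a,b]}S_s(\dd\bA_s)}\le\int_{(a,b]}\|S_s\|_{\mathrm{op}}\,\abs{\dd\bA_s}$ gives $\sum_j\abs{C_{t_{j+1}}-C_{t_j}}\le\int_{(0,T]}\|S_s\|_{\mathrm{op}}\,\abs{\dd\bA_s}$, and taking the supremum over partitions reduces the whole lemma to a pointwise computation of the operator norm $\|S_s\|_{\mathrm{op}}$. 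The $\mathbb A$-case is identical with $\bA$ replaced by $\mathbb A$.

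For the first estimate I would take $S_s(\xi)=\bY_{s-}\,\xi\,\bZ_{s-}$ on $\xi\in(\Rd)^{\otimes n}$. The crucial observation is that $\bY_{s-}$, $\xi$, $\bZ_{s-}$ are homogeneous of \emph{fixed} degrees $l,n,m$, so their concatenation is the iterated tensor product $\bY_{s-}\otimes\xi\otimes\bZ_{s-}$, and the Euclidean norm is multiplicative across tensor factors. Hence $\abs{\bY_{s-}\,\xi\,\bZ_{s-}}=\abs{\bY_{s-}}\,\abs{\xi}\,\abs{\bZ_{s-}}=\abs{\bY_{s-}\bZ_{s-}}\,\abs{\xi}$, so $\|S_s\|_{\mathrm{op}}=\abs{\bY_{s-}\bZ_{s-}}$, and the first paragraph closes the case.

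For the second estimate I would unwind $\odot$: with $\mathbb A=\sum_{|w_1|=n,\,|w_2|=n'}\mathbb A^{w_1,w_2}\,e_{w_1}\otimes e_{w_2}$, the map $S_s=(\bY_{s-}\,\Id\,\bY'_{s-})\odot(\bZ_{s-}\,\Id\,\bZ'_{s-})$ sends $\xi$ to $\sum\xi^{w_1,w_2}\,\bY_{s-}e_{w_1}\bY'_{s-}\bZ_{s-}e_{w_2}\bZ'_{s-}$, i.e. it tensors on the left by $\bY_{s-}$, inserts the fixed homogeneous block $\bY'_{s-}\bZ_{s-}$ between the two word-slots, and tensors on the right by $\bZ'_{s-}$. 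Because all blocks have fixed degree, distinct pairs $(w_1,w_2)$ land in orthogonal basis directions, and the same multiplicativity yields $\abs{S_s(\xi)}=\abs{\bY_{s-}}\,\abs{\bY'_{s-}\bZ_{s-}}\,\abs{\bZ'_{s-}}\,\abs{\xi}=\abs{(\bY\bY'\bZ\bZ')_{s-}}\,\abs{\xi}$ for every $\xi$, so $\|S_s\|_{\mathrm{op}}=\abs{(\bY\bY'\bZ\bZ')_{s-}}$. Finally, since every word of length $n+n'$ factors uniquely into a prefix of length $n$ and a suffix of length $n'$, the components of $m(\mathbb A)=\mul(\mathbb A)$ are merely a relabelling of those of $\mathbb A$; therefore $\abs{m(\mathbb A)}=\abs{\mathbb A}$ pathwise and $\abs{\dd m(\mathbb A)_s}=\abs{\dd\mathbb A_s}$, and the first paragraph again concludes.

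The single delicate point is the exact multiplicativity $\abs{\bx\,\by}=\abs{\bx}\,\abs{\by}$ of the Euclidean tensor norm under concatenation, which I would emphasise holds \emph{precisely because} every factor is homogeneous of fixed degree: for inhomogeneous tensor series one only has submultiplicativity, and the clean operator-norm identities above would weaken to inequalities with grading-dependent constants. The remaining ingredients --- the Stieltjes total-variation estimate and the partition/supremum passage --- are entirely routine.
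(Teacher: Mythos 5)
Your proposal is correct and follows essentially the same route as the paper's proof: a pathwise Riemann--Stieltjes estimate on each subinterval (your ``elementary bound'' via the operator norm of $S_s$ is exactly the Riemann-sum argument the paper spells out), combined with the norm behaviour of concatenation of \emph{homogeneous} tensors, followed by the supremum over partitions. The only difference is cosmetic: you observe that the Euclidean norm is exactly multiplicative on homogeneous factors and hence compute $\|S_s\|_{\mathrm{op}}$ precisely, whereas the paper only invokes the inequality $\abs{\bx\by}=\abs{\by\bx}\le\abs{\bx}\abs{\by}$, which already suffices.
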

\begin{proof}
Let $0 \le s \le t \le T$ then it holds
\begin{align*}
\abs{\int_{(s,t]}\bY_{u-} \dd\bA_u \bZ_{u-}} \le \int_{(s,t]}\abs{\bY_{u-}\bZ_{u-}}\abs{\dd \bA_u}.
\end{align*}
Indeed, as it follows e.g. from \cite[Theorem on Stieltjes integrability]{young1936inequality}, we can approximate the integral in the left-hand side by Riemann sums.
Then for a partition $(t_i)_{i=1, \dotsc, k}$ of the interval $[s,t]$ we have
\begin{align*}
\abs{\sum_{i=1}^{k-1} \bY_{t_i-} (\bA_{t_{i+1}} - \bA_{t_i}) \bZ_{t_i-}}  \le \sum_{i=1}^{k-1} \abs{\bY_{t_i-} (\bA_{t_{i+1}} - \bA_{t_i}) \bZ_{t_i-}} \le \sum_{i=1}^{k-1} \abs{\bY_{t_i-} \bZ_{t_i-}}\abs{\bA_{t_{i+1}} - \bA_{t_i}},
\end{align*}
where the last inequality follows from the fact that for homogeneous tensors $\bx\in(\Rd)^{\otimes m}$ and $\by\in(\Rd)^{\otimes n}$ it holds that $\abs{\bx\by} = \abs{\by\bx} \le \abs{\bx}\abs{\by}$.
Regarding the $1$-variation we then have
\begin{align*}
\abs{\int_{(0,\cdot]}\bY_{s-} \dd\bA_s \bZ_{s-}}_{1-\var;[0,T]} &= \sup_{0 \le t_1 \le \cdots \le t_k \le T} \sum_{i=1}^{k} \abs{\int_{(t_{i}, t_{i+1}]}\bY_{s-} \dd\bA_s \bZ_{s-}} \\
&\le \sup_{0 \le t_1 \le \cdots \le t_k \le T} \sum_{i=1}^{k} \int_{(t_{i},t_{i+1}]}\abs{\bY_{s-}\bZ_{s-}}\abs{\dd \bA_s} \\
& = \int_{(0,T]}\abs{\bY_{s-}\bZ_{s-}}\abs{\dd \bA_s}.
\end{align*}

Regarding the second statement we see that for any $0 \le s \le t \le T$ we have
\begin{align*}
\abs{\int_{(s,t]} (\bY_{u-} \Id \bY^{\prime}_{u-})\odot (\bZ_{u-} \Id \bZ^{\prime}_{u-})(\dd\mathbb{A}_u)} \le \int_{(s,t]} \abss{\bY\bY^{\prime}\bZ\bZ^{\prime}}_{u-} \abs{\dd m(\mathbb{A})_u}
\end{align*}
Indeed, we approximate the integral in the right-hand side again by a Riemann sum.
Then for a partition $(t_i)_{i=1, \dotsc, k}$ of the interval $[s,t]$ we have
\begin{align*}
&\abs{\sum_{i=1}^{k-1} (\bY_{t_i-}\Id\bY^{\prime}_{t_i-})\odot(\bZ_{t_i-}\Id\bZ^{\prime}_{t_i-}) (\mathbb{A}_{t_{i+1}} - \mathbb{A}_{t_i})}  \\
&\hspace{5em}\le \sum_{i=1}^{k-1} \abs{\sum_{|w_1|=m, |w_2|=m^{\prime}}\bY_{t_i-}e_{w_1}\bY^{\prime}_{t_i-}\bZ_{t_i-}e_{w_2}\bZ^{\prime}_{t_i-} (\mathbb{A}_{t_{i+1}} - \mathbb{A}_{t_i})} \\
&\hspace{5em} \le \sum_{i=1}^{k-1} \abs{\bY_{t_i-}\bY^{\prime}_{t_i-}\bZ_{t_i-}\bZ^{\prime}_{t_i-}}\abs{ m(\mathbb{A}_{t_{i+1}}) - m(\mathbb{A}_{t_i})},
\end{align*}
where the last inequality follows from the definition of the norm on (homogeneous) tensors and the definition of the multiplication map $m$.
We conclude analogously to the proof of the first statement.
\end{proof}

\begin{lemma}[Emery's inequality]\label{lem:emerys}
Let $\bX\in\Se((\Rd)^{\otimes n})$, $\bY\in\D((\Rd)^{\otimes l})$ and $\bZ\in\D((\Rd)^{\otimes m})$ then for $p, q\in[1,\infty)$ and $1/r = 1/p + 1/q$ it holds
\begin{align*}
\Abs{\int_{(0,\cdot]}\bY_{s-} \dd\bX_s \bZ_{s-}}_{\HSe^{r}((\Rd)^{\otimes (l+n+m)})} \le c \Abss{\bY\bZ}_{\Se^{q}((\Rd)^{\otimes (l+m)})}\Abss{\bX}_{\HSe^{p}((\Rd)^{\otimes n})},
\end{align*}
where $c>0$ is a constant that only depends on $d$ and $m$.
\end{lemma}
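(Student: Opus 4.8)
The plan is to reduce the tensor estimate to scalar computations via the componentwise description \eqref{eq:def_tensor_ito_integral} of the integral, handling the local martingale and finite-variation parts of $\bX$ separately and optimizing over the decomposition at the end. Concretely, I would fix any decomposition $\bX=\bM+\bA$ with $\bM\in\Ma_\loc((\Rd)^{\otimes n})$ and $\bA\in\Fv((\Rd)^{\otimes n})$, and split
\[
\int_{(0,\cdot]}\bY_{s-}\,\dd\bX_s\,\bZ_{s-}
= \underbrace{\int_{(0,\cdot]}\bY_{s-}\,\dd\bM_s\,\bZ_{s-}}_{\eqcolon\,\bN}
+ \underbrace{\int_{(0,\cdot]}\bY_{s-}\,\dd\bA_s\,\bZ_{s-}}_{\eqcolon\,\bC},
\]
where $\bN$ is again a local martingale and $\bC\in\Fv$. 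Since $\Abss{\cdot}_{\HSe^r}$ is an infimum over decompositions, it suffices to bound $\Abss{\bN}_{\HSe^r}$ and $\Abss{\bC}_{\HSe^r}$ for this fixed decomposition and then pass to the infimum over all decompositions of $\bX$.

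For the finite-variation part $\bC$ I would invoke \Cref{lem:one-variation_tensor_integrals}, which gives the pathwise bound $\abss{\bC}_{1-\var;[0,T]}\le\int_{(0,T]}\abs{\bY_{s-}\bZ_{s-}}\,\abs{\dd\bA_s}\le\big(\sup_{0\le s\le T}\abs{\bY_s\bZ_s}\big)\,\abss{\bA}_{1-\var;[0,T]}$, whence $\Abss{\bC}_{\HSe^r}\le\Abss{\abss{\bC}_{1-\var;[0,T]}}_{\Lcal^r}$, and then apply H\"older's inequality with $1/r=1/q+1/p$ to obtain $\Abss{\bC}_{\HSe^r}\le\Abss{\bY\bZ}_{\Sesup^q}\,\Abss{\bA}_{\Fv^p}$. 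For the martingale part $\bN$ I would pass to components using \eqref{eq:def_tensor_ito_integral} together with the elementary inequality $\Abss{\bN}_{\HSe^r}\le\sum_{|w|=l+n+m}\Abss{N^w}_{\HSe^r}$ from \Cref{sec:tensor_semimartingales}. Because $\bY,\bX,\bZ$ are homogeneous of fixed degrees $l,n,m$, every word $w$ of length $l+n+m$ splits \emph{uniquely} as $w=w_1w_2w_3$ with $|w_1|=m$, $|w_2|=n$, $|w_3|=l$, so each $N^w=\int_{(0,\cdot]}\bZ^{w_1}_{s-}\bY^{w_3}_{s-}\,\dd M^{w_2}_s$ is a single scalar stochastic integral against a local martingale. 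Consequently $\GQV{N^w}_T=\int_0^T\big(\bZ^{w_1}_{s-}\bY^{w_3}_{s-}\big)^2\,\dd\GQV{M^{w_2}}_s$, and the homogeneity identity $\abs{\bY_s\bZ_s}=\abs{\bY_s}\,\abs{\bZ_s}$ recorded in the proof of \Cref{lem:one-variation_tensor_integrals}, combined with $\abs{\bZ^{w_1}_s}\le\abs{\bZ_s}$ and $\abs{\bY^{w_3}_s}\le\abs{\bY_s}$, yields the pointwise estimate $\abs{\bZ^{w_1}_s\bY^{w_3}_s}\le\abs{\bY_s\bZ_s}$. H\"older then gives $\Abss{N^w}_{\HSe^r}=\Abss{\GQVsmall{N^w}_T^{1/2}}_{\Lcal^r}\le\Abss{\bY\bZ}_{\Sesup^q}\,\Abss{M^{w_2}}_{\HSe^p}$.

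Finally I would combine the two parts and convert the scalar factors back into tensor quantities: since $\GQV{M^{w_2}}_T=\GQV{\bM}_T^{w_2w_2}$ is a single coordinate of the tensor bracket $\GQV{\bM}_T$, it is $\le\abs{\GQV{\bM}}_T$, so $\Abss{M^{w_2}}_{\HSe^p}\le\Abss{\bM}_{\HSe^p}$. Summing over the $d^{l+n+m}$ words and adding the finite-variation estimate gives, for the fixed decomposition,
\[
\Abss{\int_{(0,\cdot]}\bY_{s-}\,\dd\bX_s\,\bZ_{s-}}_{\HSe^r}
\le c\,\Abss{\bY\bZ}_{\Sesup^q}\big(\Abss{\bM}_{\HSe^p}+\Abss{\bA}_{\Fv^p}\big),
\]
with $c$ depending only on $d$ and the degrees $l,n,m$; as $\abs{\GQV{\bM}}_T^{1/2}$ and $\abss{\bA}_{1-\var;[0,T]}$ are nonnegative, one has $\Abss{\bM}_{\HSe^p}+\Abss{\bA}_{\Fv^p}\le 2\Abss{\abs{\GQV{\bM}}_T^{1/2}+\abss{\bA}_{1-\var;[0,T]}}_{\Lcal^p}$, and taking the infimum over decompositions $\bX=\bM+\bA$ turns the right-hand side into $2\Abss{\bX}_{\HSe^p}$, which is the claim. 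The analytic content is thus entirely elementary once \Cref{lem:one-variation_tensor_integrals} and the scalar bracket identity $\GQV{\int H_-\,\dd M}=\int H_-^2\,\dd\GQV{M}$ are in hand; the only point requiring care is the uniformity of the constant across the finitely many scalar components, which is secured by the homogeneity of $\bY,\bX,\bZ$ (making the word splitting unique) and by the fact that each coordinate of a tensor bracket is dominated by the tensor norm. I expect no genuine obstacle beyond this bookkeeping.
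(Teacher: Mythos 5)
Your proof is correct and follows essentially the same route as the paper's: fix a decomposition $\bX=\bM+\bA$, bound the finite-variation part via \Cref{lem:one-variation_tensor_integrals} and the martingale part via its quadratic variation, pull out $\sup_{s}\abs{\bY_s\bZ_s}$, apply H\"older with $1/r=1/p+1/q$, and take the infimum over decompositions. The only (cosmetic) difference is that you treat the martingale bracket componentwise, exploiting the unique word splitting forced by homogeneity, where the paper stays at the tensor level using $\int(\bY\,\Id\,\bZ)^{\odot2}\,\dd\outerbracketsmall{\bM}{\bM}$ together with the Kunita--Watanabe \Cref{lem:cauchy_schwartz_tensor_bracket}; both yield a constant depending on $d$ and the tensor degrees.
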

\begin{proof}Let $\bX = \bX_0 + \bM + \bA$ be a semimartingale decomposition with $\bM_0 = \bA_0 = 0$.
Then it follows by definition of the $\HSe^{r}$-norm and the above \Cref{lem:one-variation_tensor_integrals}
\begin{align*}
\Abs{\int_{(0,\cdot]}\bY_{s-} \dd\bX_s \bZ_{s-}}_{\HSe^{r}}
&\le \Abs{\abs{\int_{(0,T]}(\bY_{s-} \Id\bZ_{s-})^{\odot 2}\dd\outerbracket{\bM}{\bM}_s}^{1/2} + \abs{\int_{(0,\cdot]}\bY_{s-} \dd\bA_s\bZ_{s-}}_{1-\var;[0,T]}}_{\Lcal^{r}} \\
&\le \Abs{\abs{\int_{(0,T]}\abs{(\bY_{s-}\bZ_{s-})^{2}} \abs{\dd\GQV{\bM}_s}}^{1/2} + \int_{(0,T]}\abs{\bY_{s-}\bZ_{s-}} \abs{\dd\bA_s}}_{\Lcal^{r}} \\
&\le \Abs{\sup_{0 \le s \le T}\abs{\bY_s\bZ_s}\left( \abs{\GQV{\bM}}_{1-\var;[0;T]}^{1/2} + \abs{\bA}_{1-\var;[0,T]}\right)}_{\Lcal^{r}} \\
&\le c \Abs{\bY\bZ}_{\Se^{q}}\Abs{\abs{\GQV{\bM}_T} + \abs{\bA_s}_{1-\var;[0,T]}}_{\Lcal^{p}},
\end{align*}
where we have used the generalized H\"older inequality and the Kunita-Watanabe inequality (\Cref{lem:cauchy_schwartz_tensor_bracket}) to get to the last line.
Taking the infimum of over all semimartingale decomposition $\bM + \bA$ yields the statement.
\end{proof}

The following technical lemma will be used in the proof of both \Cref{thm:bdg_signature} and \Cref{thm:main_with_jumps}.
\begin{lemma}\label{lem:homn_by_product}
Let $\bX, \bY\in\Se(\TT^{N})$, $N\in\None$, $q\in[1,\infty)$ and assume that there exists a constant $c>0$ such that
\begin{align*}
\Abss{\bY^{(n)}}_{\HSe^{qN/n}} \le c \sum_{\Vert\ell\Vert = n}
\Abss{\bX^{(l_1)}}_{\HSe^{qN/l_1}}\cdots\Abss{\bX^{(l_j)}}_{\HSe^{qN/l_j}}, \quad n=1, \dotsc, N,
\end{align*}
where the summation is over $\ell = (l_1, \dots, l_j)\in(\None)^{j}$, $j\in\None$, $\Vert\ell\Vert = l_1 + \dots + l_j$.
Then there exists a constant $C>0$, depending only on $c$ and $N$, such that
\begin{align*}
\homnqNs{\bY} \le C \homnqNs{\bX}.
\end{align*}
\end{lemma}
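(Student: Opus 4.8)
The plan is to exploit the homogeneity built into the gauge $\homnqNs{\cdot}$, which converts the multiplicative hypothesis into a linear conclusion. Write $x_n \coloneqq \Abss{\bX^{(n)}}_{\HSe^{qN/n}}$ and $y_n \coloneqq \Abss{\bY^{(n)}}_{\HSe^{qN/n}}$ for $n = 1, \dots, N$, and set $R \coloneqq \homnqNs{\bX} = \sum_{n=1}^N x_n^{1/n}$. If $R = 0$ then $x_n = 0$ for every $n$, so the hypothesis forces $y_n = 0$ and there is nothing to prove; hence I may assume $R > 0$ throughout.

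The heart of the argument is the following elementary per-level bound: for each single level $m \in \{1, \dotsc, N\}$ one has $x_m^{1/m} \le R$, simply because $R$ is a sum of nonnegative terms one of which is $x_m^{1/m}$; equivalently $x_m \le R^{m}$. Now fix $n$ and a composition $\ell = (l_1, \dotsc, l_j)$ with $\Vert\ell\Vert = l_1 + \dotsb + l_j = n$. Multiplying this bound over the parts of $\ell$ gives
\[
x_{l_1}\cdots x_{l_j} \le R^{l_1}\cdots R^{l_j} = R^{l_1 + \dotsb + l_j} = R^{n},
\]
so every summand appearing on the right-hand side of the hypothesis is at most $R^n$, uniformly in the composition. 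The weight bookkeeping — that a level-$m$ factor enters through its $1/m$-th root — is exactly what makes the product over a composition telescope to the single power $R^n$.

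It then remains to count the summands and take roots. The number of compositions of $n$ into positive parts equals $2^{n-1}$, so the hypothesis yields $y_n \le c\,2^{n-1} R^n$. Taking $n$-th roots and using $c^{1/n} \le \max(1,c)$ and $2^{(n-1)/n} \le 2$ for all $n \ge 1$, one obtains $y_n^{1/n} \le 2\max(1,c)\,R$, and summing over $n$ gives
\[
\homnqNs{\bY} = \sum_{n=1}^N y_n^{1/n} \le 2N\max(1,c)\,R = C\,\homnqNs{\bX},
\]
with $C \coloneqq 2N\max(1,c)$ depending only on $c$ and $N$, as required. The argument is entirely elementary, so there is no genuine obstacle beyond the bookkeeping; the only point worth flagging is the correct reading of the homogeneity, namely $x_m \le R^m$ rather than $x_m \le R$, since it is precisely this matching of weights that lets the multiplicative hypothesis collapse to a linear bound.
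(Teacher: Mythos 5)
Your proof is correct, and it takes a slightly different route than the paper's. The paper argues level by level: it first uses subadditivity of $t\mapsto t^{1/n}$ to pull the $n$-th root inside the sum over compositions, then applies Young's inequality for products (weighted AM--GM with weights $l_i/n$) to convert each geometric mean $\prod_i\bigl(\Abss{\bX^{(l_i)}}_{\HSe^{qN/l_i}}^{1/l_i}\bigr)^{l_i/n}$ into the arithmetic mean $\sum_i\frac{l_i}{n}\Abss{\bX^{(l_i)}}_{\HSe^{qN/l_i}}^{1/l_i}$, arriving at the per-level estimate $\Abss{\bY^{(n)}}_{\HSe^{qN/n}}^{1/n}\lesssim\sum_{i=1}^{n}\Abss{\bX^{(i)}}_{\HSe^{qN/i}}^{1/i}$ before summing over $n$. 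You instead collapse everything onto the single scalar $R=\homnqNs{\bX}$ via the crude bound $x_m\le R^{m}$, so each product over a composition telescopes to $R^{n}$ and only a count of compositions ($2^{n-1}$) remains. Both arguments exploit exactly the same homogeneity matching of the weights $1/l_i$; yours avoids Young's inequality entirely and is a bit more elementary, at the price of a cruder intermediate bound (the paper's version controls $\Abss{\bY^{(n)}}^{1/n}$ by the levels of $\bX$ up to $n$ only, rather than by the full gauge $R$), which is immaterial for the stated conclusion. Your handling of the degenerate case $R=0$ and the tracking of the constant $C=2N\max(1,c)$ are both in order.
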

\begin{proof} Note that for any $n\in\{1, \dotsc, N\}$ it holds
\begin{align*}
\bigg(\sum_{\Vert\ell\Vert = n} \Abss{\bX^{(l_1)}}_{\HSe^{qN/l_1}}\cdots\Abss{\bX^{(l_j)}}_{\HSe^{qN/l_j}}\bigg)^{1/n}
\le& \sum_{\Vert\ell\Vert = n} (\Abss{\bX^{(l_1)}}_{\HSe^{qN/l_1}}^{1/l_1})^{l_1/n}\cdots(\Abss{\bX^{(l_j)}}_{\HSe^{qN/l_j}}^{1/l_j})^{l_j/n} \\
\le& \sum_{\Vert\ell\Vert = n} \left(\frac{l_1}{n} \Abss{\bX^{(l_1)}}_{\HSe^{qN/l_1}}^{1/l_1} + \dots + \frac{l_1}{n}\Abss{\bX^{(l_j)}}_{\HSe^{qN/l_j}}^{1/l_j} \right) \\
\le& c_n \sum^{n}_{i=1} \Abss{\bX^{(i)}}_{\HSe^{qN/i}}^{1/i},
\end{align*}
where $c_n>0$ is a constant depending only on $n$ and the second inequality follows from Young's inequality for products.
Hence by the above estimate and the assumption we have
\begin{align*}
\homnqNs{\bY} = \sum_{n=1}^{N} \Abss{\bY^{(n)}}_{\HSe^{qN/n}}^{1/n} \le c^{1/n} c_n \sum_{n=1}^{N} \sum_{i=1}^{n} \Abss{\bX^{(i)}}_{\HSe^{qN/i}}^{1/i} \le C \homnqNs{\bX},
\end{align*}
where $C>0$ is a constant depending only on $c$ and $N$.
\end{proof}
\subsection{Proof of \texorpdfstring{\Cref{thm:bdg_signature}}{Theorem 3.1}}\label{sec:proof_bdg_signature}
\begin{proof}
Denote by $\bS = (\Sig(\bX)_{0,t})_{0\le t \le T}$ the signature process.
We will first proof the upper inequality, i.e. that there exists a constant $C>0$ depending only on $d$, $N$ and $q$ such that
\begin{align}\label{eq:proof_sigest_up}
\homnqNs{\bS} \le C\homnqNs{\bX}.
\end{align}

According to \Cref{lem:homn_by_product} it is sufficient to show that for all $n \in\{1, \dots, N\}$ it holds
\begin{align}\label{eq:sig_est_induction}
c_n \Abss{\bS^{(n)}}_{\HSe^{qN/n}} \le \sum_{\Vert\ell\Vert = n} \Abss{\bX^{(l_1)}}_{\HSe^{qN/l_1}}\cdots\Abss{\bX^{(l_j)}}_{\HSe^{qN/l_j}} \eqqcolon \rho_\bX^{n}
\end{align}
where $c_n>0$ is a constant (depending only on $q$, $d$ and $n$).
Note that it holds
\begin{align}\label{eq:rho_multiplicativity_est}
\rho_\bX^{n} \le \sum_{\Vert\ell\Vert = n} \rho_\bX^{l_j}\Abss{\bX^{(l_{j-1})}}_{\HSe^{qN/l_{j-1}}} \cdots \Abss{\bX^{(l_1)}}_{\HSe^{qN/l_1}} \le c^{\prime}\rho_\bX^{n}
\end{align}
where $c^{\prime}$ is a constant depending only on $n$.
We are going to proof \eqref{eq:sig_est_induction} inductively.

For $n=1$ we have $\bS^{(1)} = \bX^{(1)} - \bX^{(1)}_0 = \bX^{(1)} \in \HSe^{qN}$ and therefore the estimate follows immediately.
Now, assume that \eqref{eq:sig_est_induction} holds for all tensor levels up to some level $n-1$
with $n \in \{2, \dotsc, N\}$.
We will denote by $c^{\prime}, c^{\prime\prime} > 0$ constants that only depend on $n$, $d$ and $q$.
Then we have from \eqref{eq:marcus_signature_ode3}
\begin{equation*}
\bS^{(n)}_t
=\begin{multlined}[t] \sum_{\Abs{\ell} = n, \abs{\ell}\le 2}\int_0^t \bS^{(l_2)}_{u-} \dd \bX^{(l_1)}_u + \sum_{\Abs{\ell} = n,\, 2 \le \abs{\ell}\le 3}\frac12\int_0^t \bS^{(l_3)}_{u-} \dd\CVsmall{\bX^{(l_1)c}}{\bX^{(l_2)c}}_u \\
+ \sum_{\Abs{\ell} = n,\, \abs{\ell}\ge 2}\sum_{0 \le u \le t}\bS^{(l_j)}_{u-}\frac{\Delta
\bX^{(l_{j-1})}_u\cdots \Delta \bX^{(l_1)}_u}{(j-1)!}.\end{multlined}
\end{equation*}
For the first term in the above right-hand side we have by Emery's inequality (\Cref{lem:emerys}) the following estimate
\begin{align*}
\Abs{\sum_{\Abs{\ell} = n, \abs{\ell}\le 2} \int_0^\cdot \bS^{(l_2)}_{u-} \dd \bX^{(l_1)}_u}_{\HSe^{qN/n}}
&\le c^{\prime}\sum_{\Abs{\ell} = n, \abs{\ell}\le 2} \Abss{\bS^{(l_2)}}_{\Se^{qN/l_2}} \Abss{\bX^{(l_1)}}_{\HSe^{qN/l_2}}
\le c^{\prime\prime}\rho_\bX^{n}
\end{align*}
where the last inequality follows from the induction claim and \eqref{eq:rho_multiplicativity_est}.
Further, from the Kunita-Watanabe inequality (\Cref{lem:cauchy_schwartz_tensor_bracket}) and the generalized H\"older inequality, it follows that for all $l_1, l_2\in\None$ with $l_1 + l_2 \le n$ we have
\begin{align*}
\Abs{ \CVsmall{\bX^{(l_2)c}}{ \bX^{(l_1)c}}}_{\Fv^{qN/(l_1+l_2)}}
\le&c^{\prime} \Abss{\bX^{(l_2)}}_{\HSe^{qN/l_2}}\Abss{\bX^{(l_1)}}_{\HSe^{qN/l_1}}
\end{align*}
Then we have again by Emery's inequality, the induction base and \eqref{eq:rho_multiplicativity_est} that it holds
\begin{align*}
\Abs{\sum_{\Abs{\ell} = n,\, 2 \le \abs{\ell}\le 3}\int_0^t \bS^{(l_3)}_{u-} \dd\CVsmall{\bX^{(l_2)c}}{\bX^{(l_3)c}}_u}_{\HSe^{qN/n}}
\le c^{\prime}\rho_{\bX}^{n}.
\end{align*}
Finally we have for the summation term
\begin{align*}
\MoveEqLeft \Abs{\sum_{\Abs{\ell} = n,\, \abs{\ell}\ge 2}\sum_{0 \le u \le t}\bS^{(l_j)}_{u-}\frac{\Delta \bX^{(l_2)}_u\cdots \Delta \bX^{(l_k)}_u}{(j-1)!}}_{\HSe^{qN/n}} \\
&\le \sum_{\Abs{\ell} = n,\, \abs{\ell}\ge 2}\Abs{\sum_{0 \le u \le t}\abs{\bS^{(l_k)}_{u-}}\abs{\Delta \bX^{(l_{k-1})}_u}\cdots\abs{\Delta \bX^{(l_{3})}_u} \abs{\Delta \bX^{(l_1)}_u\Delta \bX^{(l_2)}_u}}_{\Lcal^{qN/|w|}} \\
&\le c^{\prime} \sum_{\Abs{\ell} = n,\, \abs{\ell}\ge 2} \Abss{\bS^{(l_k)} }_{\Se_\infty^{qN/l_1}}
\Abss{\bX^{(l_{k-1})} }_{\Se_\infty^{qN/l_{k-1}}}\cdots
\Abss{\bX^{(l_{3})} }_{\Se_\infty^{qN/l_{3}}} \Abs{\sum_{0 \le u \le t} \abs{\Delta \bX^{(l_1)}_u\Delta \bX^{(l_2)}_u}}_{\Lcal^{qN/(l_1+l_2)}} \\
&\le c^{\prime\prime} \rho_\bX^{n}
\end{align*}
with the last inequality follows again by the Kunita-Watanabe inequality, the induction basis and \eqref{eq:rho_multiplicativity_est}.
Thus we have shown that \eqref{eq:sig_est_induction} holds for all $n\in\{1, \dots, N\}$.

\newcommand{\olbX}{\bar{\bX}}
Now we will proof the lower inequality, i.e. that there exists a constant $c>0$ depending only on $d$, $N$ and $q$ such that
\begin{align}\label{eq:proof_sigest_low}
c \homnqNs{\bX} \le \homnqNs{\bS}.
\end{align}
Therefore define $\olbX^{n} \coloneq (0, \bX^{(1)}, \dots, \bX^{(n)}, 0, \dots, 0)\in\HSehom^{q,N}$ and note that it holds
\begin{align*}
\homnqNs{\olbX^{1}} = \Abss{\bX^{(1)}}_{\HSe^{qN}} = \Abss{\bS^{(1)}}_{\HSe^{qN}} \le \homnqN{\bS}.
\end{align*}
Now assume that it holds $$\homnqNs{\olbX^{n-1}} \le c^{\prime} \homnqN{\bS}$$ for some $n \in \{1,
\dotsc,N\}$.
It follows from the definition of the signature that 
\begin{align}\label{eq:cut_off_signature}
\bS^{(n)}_t = \bX^{(n)}_{0,t} + \Sig(\olbX^{n-1})^{(n)}_{0,t}
\end{align}
and further we have from the upper bound \eqref{eq:proof_sigest_up}, which was already proven above, that
\begin{align}\label{eq:cut_off_signature_estimate}
\homnqNs{\Sig(\olbX^{n-1})_{0,\cdot}} \le C \homnqNs{\olbX^{n-1}}  \le C c^{\prime} \homnqNs{\bS}.
\end{align}
Then we have
\begin{align*}
\homnqNs{\olbX^{n}} &= \homnqNs{\olbX^{n-1}} +  \Abss{\bX^{(n)}}_{\HSe^{qN/n}}^{1/n} \\
&\le c^{\prime}\homnqNs{\bS} + \Abss{\bS^{(n)}}_{\HSe^{qN/n}}^{1/n} + \Abss{\Sig(\olbX^{n-1})^{(n)}_{0,\cdot}}_{\HSe^{qN/n}}^{1/n} \\
&\le c^{\prime}\homnqNs{\bS} + \homnqN{\bS}+   \homnqNs{\Sig(\olbX^{n-1})_{0,\cdot}} \\
&\le c^{\prime \prime}\homnqNs{\bS},
\end{align*}
where we have used \eqref{eq:cut_off_signature} in the second line and \eqref{eq:cut_off_signature_estimate} in the last line.
Therefore, noting that $\olbX^{N}=\bX$, the inequality \eqref{eq:proof_sigest_low} follows by induction.
\end{proof}

\subsection{Proof of \texorpdfstring{\Cref{thm:main_with_jumps}}{Theorem 4.1}}
\label{sec:proof_main_theorem}
We prepare the proof of \Cref{thm:main_with_jumps} with a few more lemmas.
\begin{lemma}\label{lem:exponential_derivative}
Let $N \in \None$ then we have the following directional derivatives of the truncated
exponential map $\exp_N\colon \tf^{N} \to \TT_1^{N}$
\begin{align*}
(\partial_w \exp_N)(\mathbf{x}) &=G(\ad{\mathbf{x}})(e_w)\expN{\mathbf{x}} = \expN{\mathbf{x}} G(-\ad{\mathbf{x}})(e_w), \quad \mathbf{x}\in \tf^{N}, \\
(\partial_w\partial_{w'} \exp_N)(\mathbf{x}) &= \widetilde{Q}(\ad{\mathbf{x}})(e_w\otimes e_{w'})\exp_N(\mathbf{x}), \quad \mathbf{x}\in\tf^{N},
\end{align*}
for all words $w, w' \in \W_d$ with $1\le |w|, |w'| \le N$, where $G$ is defined in \eqref{eq:GHQ_def} and for
\begin{align*}
    \widetilde Q(\ad{\mathbf{x}})(a\otimes b) &=\; G(\ad{\mathbf{x}})(b) G(\ad{\mathbf{x}})(a) +
    \int_0^1 \tau [G (\tau \ad{\mathbf{x}})(b), e^{\tau \ad{\mathbf{x}}}(a)] \,\dd \tau\\
    &=\; \sum_{n,m = 0}^{N} \frac{(\ad{\mathbf{x}})^n(b)}{(n + 1)
    !} \frac{(\ad{\mathbf{x}})^m(a)}{(m + 1) !} + \sum_{n,m = 0}^{N}
    \frac{[(\ad{\mathbf{x}})^n(b), (\ad{\mathbf{x}})^m(a)]_{}}{(n + m + 2) (n + 1)!\,m!}, \quad \mathbf{x},a,b \in \tf^{N}.
  \end{align*}
\end{lemma}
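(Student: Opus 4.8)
The plan is to reduce the whole statement to the classical Duhamel (variation-of-constants) formula for the derivative of the exponential. First I would record the structural simplification coming from truncation: in $\TT^N$ everything is finite-dimensional and $\ad\mathbf{x}$ is nilpotent --- it raises the tensor degree by at least one, so $(\ad\mathbf{x})^k(e_w)=0$ as soon as $k+|w|>N$. Hence $\exp_N$ is a polynomial map, all power series in $\ad\mathbf{x}$ occurring below are finite sums (which is why the sums in the statement run only up to $N$), the directional derivatives exist, and differentiation under the integral sign as well as the interchange of mixed partials are automatic.

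For the first identity I would introduce the two-parameter family $U(r,s)\coloneq\exp_N(r(\mathbf{x}+s e_w))$, so that $(\partial_w\exp_N)(\mathbf{x})=\partial_s U(1,s)|_{s=0}$. Since $\partial_r U=(\mathbf{x}+se_w)U$ with $U(0,s)=1$, differentiating this linear ODE in $s$ at $s=0$ gives $\partial_r V=e_w\exp_N(r\mathbf{x})+\mathbf{x}\,V$, $V(0)=0$, for $V(r)\coloneq\partial_s U(r,s)|_{s=0}$; variation of constants then yields the integral representation $(\partial_w\exp_N)(\mathbf{x})=\int_0^1\exp_N((1-u)\mathbf{x})\,e_w\,\exp_N(u\mathbf{x})\,\dd u$. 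Using $\mathrm{Ad}_{\exp_N(c\mathbf{x})}=e^{c\ad\mathbf{x}}$, i.e. $\exp_N(c\mathbf{x})z=(e^{c\ad\mathbf{x}}z)\exp_N(c\mathbf{x})$, the integrand becomes $(e^{(1-u)\ad\mathbf{x}}e_w)\exp_N(\mathbf{x})$, and $\int_0^1 e^{(1-u)\ad\mathbf{x}}\dd u=G(\ad\mathbf{x})$ gives the left form $G(\ad\mathbf{x})(e_w)\exp_N(\mathbf{x})$; moving the exponential to the other side via $e_w\exp_N(c\mathbf{x})=\exp_N(c\mathbf{x})(e^{-c\ad\mathbf{x}}e_w)$ and $\int_0^1 e^{-u\ad\mathbf{x}}\dd u=G(-\ad\mathbf{x})$ gives the right form.

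For the second identity I would differentiate this integral representation once more in the direction $e_{w'}$, applying the first-derivative formula to $\mathbf{x}\mapsto\exp_N((1-u)\mathbf{x})$ and $\mathbf{x}\mapsto\exp_N(u\mathbf{x})$ (chain rule through the linear rescalings). This produces two iterated integrals whose integrands are products of the shape $\exp_N(c_1\mathbf{x})z_1\exp_N(c_2\mathbf{x})z_2\exp_N(c_3\mathbf{x})$ with $c_1+c_2+c_3=1$. The decisive simplification is that $e^{c\ad\mathbf{x}}$ is an algebra automorphism, so every such product collapses to $(e^{c_1\ad\mathbf{x}}z_1)(e^{(c_1+c_2)\ad\mathbf{x}}z_2)\exp_N(\mathbf{x})$. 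Carrying out the inner integrations, each of the elementary form $\int_0^T e^{\sigma\ad\mathbf{x}}\dd\sigma=T\,G(T\ad\mathbf{x})$, and adding the two pieces, the commuting contributions assemble into the product $G(\ad\mathbf{x})(b)G(\ad\mathbf{x})(a)$ while the remainder collapses into the single commutator integral $\int_0^1\tau\,[G(\tau\ad\mathbf{x})(b),e^{\tau\ad\mathbf{x}}(a)]\,\dd\tau$, i.e. $\widetilde Q(\ad\mathbf{x})(e_w\otimes e_{w'})\exp_N(\mathbf{x})$ with $a=e_w$, $b=e_{w'}$. The equivalence of the two displayed forms of $\widetilde Q$ is then a routine power-series check: expanding $G(\tau\ad\mathbf{x})(b)=\sum_n\tau^n(\ad\mathbf{x})^n(b)/(n+1)!$ and $e^{\tau\ad\mathbf{x}}(a)=\sum_m\tau^m(\ad\mathbf{x})^m(a)/m!$ and using $\int_0^1\tau^{\,n+m+1}\dd\tau=1/(n+m+2)$ turns the commutator integral into the stated double sum.

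The routine parts are the Duhamel step and the final series identification; the genuine obstacle is the bookkeeping in the second-derivative reduction. One must track the non-commutativity carefully so that the two iterated integrals recombine into exactly one product term plus one commutator integral carrying the correct weight $1/(n+m+2)$. It is worth keeping in mind that, since $\widetilde Q$ is ultimately contracted against a symmetric quadratic-covariation bracket when \Cref{thm:main_with_jumps} is applied, only the part of $\partial_w\partial_{w'}\exp_N$ that is symmetric in $(w,w')$ is relevant downstream --- a useful consistency check, via $\partial_w\partial_{w'}=\partial_{w'}\partial_w$, on the closed form obtained.
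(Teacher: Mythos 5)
Your argument is correct, but it is organized differently from the paper's proof. For the first identity the paper simply cites \cite[Theorem 7.23]{friz2010multidimensional}, whereas you rederive it from scratch via the Duhamel/variation-of-constants representation $(\partial_w\exp_N)(\mathbf{x})=\int_0^1\expN{(1-u)\mathbf{x}}\,e_w\,\expN{u\mathbf{x}}\,\dd u$; this makes the lemma self-contained at the cost of an extra (routine) ODE step, and the passage to $G(\ad{\mathbf{x}})(e_w)\expN{\mathbf{x}}$ via $\mathrm{Ad}_{\expN{c\mathbf{x}}}=e^{c\ad{\mathbf{x}}}$ is exactly right. For the second identity the two routes genuinely diverge: the paper differentiates the \emph{closed form} $G(\ad{\mathbf{x}+te_w})(e_{w'})\expN{\mathbf{x}+te_w}$ by the product rule, so the term $G(\ad{\mathbf{x}})(e_{w'})G(\ad{\mathbf{x}})(e_w)\expN{\mathbf{x}}$ appears immediately and all remaining work is concentrated in computing $\frac{\dd}{\dd t}G(\ad{\mathbf{x}+te_w})(e_{w'})\vert_{t=0}$ through the integral representation $G(\ad{\mathbf{x}})=\int_0^1 e^{\tau\ad{\mathbf{x}}}\,\dd\tau$, which yields the single commutator integral in one stroke. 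You instead differentiate the integral representation, obtaining two iterated integrals over products $\expN{c_1\mathbf{x}}z_1\expN{c_2\mathbf{x}}z_2\expN{c_3\mathbf{x}}$ that must be recombined; this works (the inner integrations reduce to $\int_0^T e^{\sigma\ad{\mathbf{x}}}\dd\sigma=T\,G(T\ad{\mathbf{x}})$ after the substitutions you indicate, and the cross terms do reassemble into one product plus one weighted commutator integral), but it is where all the bookkeeping lives, and you have only sketched it. Your nilpotency observation correctly disposes of all convergence and interchange-of-derivatives issues.

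One caveat worth making explicit: carrying out your recombination literally, the product term comes out as $G(\ad{\mathbf{x}})(e_w)\,G(\ad{\mathbf{x}})(e_{w'})$ and the commutator as $\int_0^1\tau\,[G(\tau\ad{\mathbf{x}})(e_{w'}),e^{\tau\ad{\mathbf{x}}}(e_w)]\,\dd\tau$, which is not \emph{verbatim} the displayed $\widetilde Q(\ad{\mathbf{x}})(e_w\otimes e_{w'})$ (the paper's own proof likewise produces a form with $w$ and $w'$ interchanged in the commutator relative to the statement). All of these expressions coincide, since they all equal $(\partial_w\partial_{w'}\exp_N)(\mathbf{x})\expN{-\mathbf{x}}$ and mixed partials commute; and in any case only the symmetrization of $\widetilde Q$ enters downstream, by \Cref{lem:symmetric_Q}. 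So this is not a gap, but if you write the argument out you should either verify the reordering identity or invoke the symmetry of mixed partials explicitly, as you suggest at the end.
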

\begin{proof}
For all $w\in\W_d$ with $0 \le |w| \le N$ and $\bx\in\tf^{N}$ the expression $\exp_N(\bx)^{w}$ is a polynomial in the tensor components $(\bx^{v})_{1 \le |v| \le |w|}$.
Therefore the map $\exp_N: \TT^{N}_1 \to \tf^{N}$ is smooth and in particular the first and second order partial derivatives exist in all directions.
For a proof of the explicit form of the first order partial derivatives we refer to \cite[Theorem 7.23]{friz2010multidimensional}.
For the second order derivatives we follow the proof of \cite[Lemma A.1]{kamm2020stochastic}.
Therefore let $\mathbf{x}\in\tf^{N}$ and $w, \wpr$ arbitrary with $1\le |w|, |\wpr| \le N$.
Then we have by the definition of the partial derivatives in $\tf^{N}$ and the product rule
\begin{align*}
\partial_w(\partial_\wpr\exp_N(\mathbf{x})) &= \frac{\mathrm d}{\mathrm dt}\Big(G(\ad{\mathbf{x} + t e_w})(e_{\wpr})\exp_N(\mathbf{x} +  t e_w)\Big) \Big\vert_{t=0} \\
&= \frac{\mathrm d}{\mathrm dt}G(\ad{\mathbf{x} + t e_w})(e_{\wpr})\Big\vert_{t=0} \exp_N(\mathbf{x}) + G(\ad{\mathbf{x}})(e_{\wpr})G(\ad{\mathbf{x}})(e_{w})\exp_N(\mathbf{x}).
\end{align*}
From \cite[Lemma 7.22]{friz2010multidimensional} it holds $\exp_N(\ad{\mathbf{x}})({y}) = \exp_N(\mathbf{x}) y \exp_N(-\mathbf{x})$ for all $\mathbf{x},y\in\tf^{N}$ and it follows further by representing $G$ in integral form that
\begin{align*}
\frac{\mathrm d}{\mathrm dt}G(\ad{\mathbf{x} + t e_w})(e_{\wpr})\Big\vert_{t=0} &= \frac{\mathrm
d}{\mathrm dt}\bigg(\int_0^{1}\exp_N(\tau \ad{\mathbf{x}+t e_w})(e_\wpr)\,\dd\tau\bigg)\bigg\vert_{t=0} \\
&=\int_0^{1}\frac{\mathrm d}{\mathrm dt}\Big(\exp_N(\tau(\mathbf{x}+t
e_w))e_\wpr\exp_N(-\tau(\mathbf{x}+t e_w))\Big)\Big\vert_{t=0}\,\dd\tau\\
&=\begin{multlined}[t]\int_0^{1} \tau G(\ad{\tau \mathbf{x}})(e_w) \exp_N(\tau \mathbf{x}) e_\wpr
\exp_N(-\tau \mathbf{x})\,\dd\tau \\
- \int_0^{1} \tau \exp_N(\tau \mathbf{x}) e_\wpr \exp_N(-\tau \mathbf{x}) G(\ad{\tau
\mathbf{x}})(e_w)\,\dd\tau \end{multlined}\\
&= \int_0^{1} \tau \Lie{G(\tau\ad{\mathbf{x}})(e_w)}{\exp_N(\tau\ad{\mathbf{x}})(e_\wpr)} \,\dd\tau.
\end{align*}
Then the proof is finished after noting that
\begin{align*}
    \int_0^{1} \tau G(\tau\ad{\mathbf{x}})(e_w)\exp(\tau\ad{\mathbf{x}})(e_\wpr)\,\dd\tau
&= \int_0^{t}\tau\sum_{n,m=0}^{N}\frac{(\tau \ad{\mathbf{x}})^{n}}{(n+1)!}\frac{(\tau
\ad{\mathbf{x}})^{m}}{m!}\,\dd\tau\\
&=
\int_0^{t}\sum_{n,m=0}^{N}\frac{(\ad{\mathbf{x}})^{n}}{(n+1)!}\frac{(\ad{\mathbf{x}})^{m}}{m!}\tau^{1+m+n}\,\dd\tau \\
&= \sum_{n,m=0}^{N}\frac{(\ad{\mathbf{x}})^{n}(\ad{\mathbf{x}})^{m}}{(n+1)!m!(n+m+2)}.
\end{align*}
\end{proof}
Note that the operator $Q$ defined in \eqref{eq:GHQ_def} differs from the operator $\tilde{Q}$ defined above. However we have the following
\begin{lemma}\label{lem:symmetric_Q}
Let $N\in\None$ and $\bx\in\tf^{N}$, then it holds
\[\widetilde Q(\ad{\bx})(\mathbb{A}) = Q(\ad{\bx})(\mathbb{A}),\]
for all $\mathbb{A}\in\tf^{N}\otimes\tf^{N}$ with symmetric coefficients $\mathbb{A}^{w_1, w_2} = \mathbb{A}^{w_2, w_1}$ for all $w_1, w_2 \in \W_d$.
\end{lemma}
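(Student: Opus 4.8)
The plan is to exploit the symmetry of $\mathbb A$ to replace each operator by its symmetrization in the two tensor slots, and then match the two resulting operators as identical linear combinations of the same elementary maps. Write $A\coloneq\ad\bx$ and, for any operator $T\colon\tf^N\otimes\tf^N\to\tf^N$, let $S[T](a\otimes b)\coloneq\tfrac12\bigl(T(a\otimes b)+T(b\otimes a)\bigr)$ denote its symmetrization. Since $\mathbb A^{w_1,w_2}=\mathbb A^{w_2,w_1}$, relabelling the summation over $(w_1,w_2)$ shows $T(\mathbb A)=S[T](\mathbb A)$ for every such $T$; applied to $T=Q(A)$ and $T=\widetilde Q(A)$ this reduces the lemma to the operator identity $S[Q(A)]=S[\widetilde Q(A)]$. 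Note that all sums are effectively finite in $\tf^N$: as $\bx$ has no scalar component, $A$ strictly raises the tensor degree, so $A^n(a)$ vanishes in $\tf^N$ once $n\ge N$, and the discrepancy between the upper summation limits $N$ and $\infty$ in the definitions of $\widetilde Q$ and $Q$ is immaterial.

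Next I would expand both symmetrizations on a pure tensor $a\otimes b$ in terms of the concrete maps $P_{n,m}\colon a\otimes b\mapsto A^n(a)A^m(b)$ and $\bar P_{n,m}\colon a\otimes b\mapsto A^n(b)A^m(a)$. For $Q$ this is immediate from \eqref{eq:GHQ_def}:
\[
  2S[Q(A)]=\sum_{n,m\ge0}\frac{2}{(n+1)!\,m!\,(n+m+2)}\,(P_{n,m}+\bar P_{n,m}).
\]
For $\widetilde Q$ I would first split the commutator term $\tfrac{1}{(n+m+2)(n+1)!\,m!}\,\Lie{A^n(b)}{A^m(a)}$ and relabel $n\leftrightarrow m$ in the summand arising from $-A^m(a)A^n(b)$; together with the term $G(A)(b)\,G(A)(a)$ this writes $\widetilde Q(A)$ entirely through the $P_{n,m}$ and $\bar P_{n,m}$. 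Adding the $a\leftrightarrow b$ version then yields
\[
  2S[\widetilde Q(A)]=\sum_{n,m\ge0}c_{n,m}\,(P_{n,m}+\bar P_{n,m}),\qquad
  c_{n,m}=\frac{1}{(n+1)!(m+1)!}+\frac{1}{n+m+2}\Bigl(\frac{1}{(n+1)!\,m!}-\frac{1}{(m+1)!\,n!}\Bigr).
\]

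Finally I would close the argument with an elementary factorial identity. Combining the bracket over the common denominator $(n+1)!(m+1)!$ gives $\tfrac{m-n}{(n+m+2)(n+1)!(m+1)!}$, whence
\[
  c_{n,m}=\frac{1}{(n+1)!(m+1)!}\Bigl(1+\frac{m-n}{n+m+2}\Bigr)
         =\frac{1}{(n+1)!(m+1)!}\cdot\frac{2(m+1)}{n+m+2}
         =\frac{2}{(n+1)!\,m!\,(n+m+2)},
\]
which is exactly the coefficient appearing in $2S[Q(A)]$. As both symmetrized operators are thereby exhibited as one and the same linear combination of the maps $P_{n,m}+\bar P_{n,m}$, they coincide as operators on $\tf^N\otimes\tf^N$, and applying this to the symmetric element $\mathbb A$ proves $\widetilde Q(\ad\bx)(\mathbb A)=Q(\ad\bx)(\mathbb A)$. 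The only mildly delicate step is the index bookkeeping in the commutator term; everything else is routine, and there is no analytic subtlety since all sums terminate in $\tf^N$.
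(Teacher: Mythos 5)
Your proof is correct and follows essentially the same route as the paper's: expand the commutator in $\widetilde Q$, use the symmetry of $\mathbb A$ to relabel the two tensor slots (and $n\leftrightarrow m$ in the leftover term), and verify the coefficient identity $\tfrac{1}{(n+1)!(m+1)!}+\tfrac{m-n}{(n+m+2)(n+1)!(m+1)!}=\tfrac{2}{(n+1)!\,m!\,(n+m+2)}$, which is exactly the $(2m+2)$-computation in the paper. Your packaging via the symmetrization operator $S[\cdot]$ and the maps $P_{n,m},\bar P_{n,m}$ is a clean reorganization of the same argument, and your remark that the $N$-versus-$\infty$ summation limits are immaterial because $\ad\bx$ raises tensor degree is a correct (if minor) extra observation.
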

\begin{proof}
Let $N\in\None$ and $\bx\in\tf^N$ be arbitrary.
Then from the bilinearity of $\tilde{Q}(\ad{\bx})$ and the symmetry of $\mathbb{A}$ we have, with
summation over all words $w_1, w_2$ with $1 \le |w_1|,|w_2|\le N$,
\begin{align*}
  \widetilde Q(\ad{\bx})(\mathbb{A}) &=\sum_{w_1, w_2} \mathbb{A}^{w_1,
    w_2}\sum_{n,m = 0}^{N}\bigg( \frac{(\ad{\bx})^n(e_{w_2})}{(n + 1)!}
    \frac{(\ad{\bx})^m(e_{w_1})}{(m + 1)!}
+ \frac{[(\ad{\bx})^n(e_{w_2}), (\ad{\bx})^m(e_{w_1})]}{(n + m + 2) (n + 1)!\,m!} \bigg)\\
&=\begin{multlined}[t]\sum_{w_1, w_2} \mathbb{A}^{w_1, w_2}\bigg( \sum_{n,m = 0}^{N}
  \frac{(\ad{\bx})^n(e_{w_2})}{(n + 1)!} \frac{(\ad{\bx})^m(e_{w_1})}{(m + 1) !} \\
+ \frac{(\ad{\bx})^n(e_{w_2})(\ad{\bx})^m(e_{w_1}) - (\ad{\bx})^m(e_{w_1})(\ad{\bx})^n(e_{w_2})}{(n
+ m + 2) (n + 1)!\,m!} \bigg) \end{multlined}\\
&=\begin{multlined}[t]\sum_{w_1, w_2} \mathbb{A}^{w_1, w_2}\bigg( \sum_{n,m = 0}^{N}
  \frac{(\ad{\bx})^n(e_{w_1})}{(n + 1)!} \frac{(\ad{\bx})^m(e_{w_2})}{(m + 1) !} \\
+ \frac{(\ad{\bx})^n(e_{w_1})(\ad{\bx})^m(e_{w_2})}{(n + m + 2) (n + 1)!\,m!}
-\frac{(\ad{\bx})^n(e_{w_1})(\ad{\bx})^m(e_{w_2})}{(m + n + 2) (m + 1)!\,n!} \bigg)\end{multlined} \\
&= \sum_{w_1, w_2} \mathbb{A}^{w_1, w_2} \sum_{n,m = 0}^{N}  (2m+2)\frac{(\ad{\bx})^n(e_{w_1}) (\ad{\bx})^m(e_{w_2})}{(n + 1) ! (m+1) ! (n + m + 2)}(\mathbb{A}) = Q(\ad{\bx})(\mathbb{A}).
\end{align*}
\end{proof}
The following two applications of Itô's formula in the non-commutative setting will be a key ingredient in the proof of \Cref{thm:main_with_jumps}.
\begin{lemma}[Itô's product rule]\label{lem:ito_product_rule} Let $\bX, \bY \in \Se(\TT_1^N)$ for some $N\in\None$, then it holds
\begin{align*}
    \bX_t \bY_t -  \bX_0 \bY_0 = \int_{(0,t]}(\dd\bX_u)\bY_u + \int_{(0,t]}\bX_u(\dd\bY_u) + \mul(\outerbracket{\bX}{\bY}_{0,T}), \quad 0 \le t \le T.
\end{align*}
\end{lemma}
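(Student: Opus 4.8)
The plan is to reduce this tensor identity to the classical scalar integration-by-parts formula applied componentwise, and then to reassemble the three resulting sums into the tensor objects on the right-hand side. Since $\bX,\bY$ take values in the \emph{truncated} algebra $\TT^N_1\subset\TT^N$, every word that occurs has length at most $N$, so all sums over word decompositions are finite and no convergence issue can arise; in particular it suffices to establish the identity for arbitrary $\TT^N$-valued semimartingales and then to specialise. Note also that no integrability is needed: this is a pathwise semimartingale identity, and the stochastic integrals are understood as in \eqref{eq:def_tensor_ito_integral}, i.e.\ with left-limit integrands.

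First I would fix a word $w\in\W_d$ with $|w|\le N$ and project. By the definition of the truncated concatenation product, $(\bX_t\bY_t)^w=\sum_{w_1w_2=w}\bX^{w_1}_t\bY^{w_2}_t$ is a finite sum of products of the real-valued semimartingales $\bX^{w_1}$ and $\bY^{w_2}$. To each such scalar product I apply the classical integration-by-parts formula for càdlàg semimartingales (see e.g.\ \cite[Ch.~II]{protter2005stochastic}),
\begin{equation*}
\bX^{w_1}_t\bY^{w_2}_t-\bX^{w_1}_0\bY^{w_2}_0=\int_{(0,t]}\bY^{w_2}_{u-}\,\dd\bX^{w_1}_u+\int_{(0,t]}\bX^{w_1}_{u-}\,\dd\bY^{w_2}_u+[\bX^{w_1},\bY^{w_2}]_t,
\end{equation*}
and sum over all splittings $w_1w_2=w$. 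The next step is to recognise the three resulting sums. Recalling that the integrals in \eqref{eq:def_tensor_ito_integral} carry left-limit integrands, the first sum $\sum_{w_1w_2=w}\int_{(0,t]}\bY^{w_2}_{u-}\,\dd\bX^{w_1}_u$ is exactly the $w$-component of $\int_{(0,t]}(\dd\bX_u)\bY_{u-}$, and likewise the second sum is the $w$-component of $\int_{(0,t]}\bX_{u-}(\dd\bY_u)$. Finally, by the very definition of the inner bracket in Section~\ref{sec:tensor_semimartingales}, $\sum_{w_1w_2=w}[\bX^{w_1},\bY^{w_2}]_t=\bigl(\mul(\outerbracket{\bX}{\bY}_{0,t})\bigr)^w$. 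As these identities hold for every word $w$ with $|w|\le N$, summing against $e_w$ yields the asserted tensor identity.

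The only genuinely technical point is the non-commutative bookkeeping: although the scalar factors $\bX^{w_1}$ and $\bY^{w_2}$ commute, the word ordering $w=w_1w_2$ produced by the scalar product rule must be matched with the concatenation order built into $(\dd\bX_u)\bY_{u-}$ (with $\bX$ placed on the left) and into $\bX_{u-}(\dd\bY_u)$ (with $\bX$ on the left again), as prescribed by \eqref{eq:def_tensor_ito_integral}. Both matchings are immediate once one writes out the projections, so the proof is in essence a componentwise transcription of the classical statement and requires no analytic input beyond the scalar integration-by-parts formula.
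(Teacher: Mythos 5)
Your proof is correct and follows essentially the same route as the paper, which simply observes that the identity is an immediate consequence of the scalar càdlàg integration-by-parts formula together with the definitions of the outer bracket, the multiplication map, and the tensor-valued stochastic integral. Your version just spells out the componentwise projection and the word-splitting bookkeeping (and correctly reads the integrands as left limits), which the paper leaves implicit.
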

\begin{proof}
The statement is an immediate consequence of the one-dimensional Itô's product rule for \cadlag
semimartingales (e.g. \cite[Ch. II, Corollary 2]{protter2005stochastic }) and the definition of the outer
bracket and the multiplication map in \Cref{sec:tensor_semimartingales}.
\end{proof}

\begin{lemma}
  \label{lem:ito-exp-rule} Let $\bX \in\Se(\tf^{N})$ for some $N\in\None$, then it holds
\begin{equation*}
  \exp_N({\bX_t}) - \exp_N({\bX_0}) = \begin{multlined}[t]\int_{(0,t]} G(\ad{\bX_{u-}})(\dd\bX_u)\exp_N({\bX_{u-}}) + \int_0^{t} Q(\ad{\bX_{u-}})(\dd\outerbracket{\bX^{c}}{\bX^{c}}_u)\expN{\bX_{u-}} \\
+\sum_{0<u\le t}\Big(\exp_N({\bX_u}) - \exp_N({\bX_{u-}}) - G(\ad{\bX_{u-}})(\Delta
\bX_u)\exp_N({\bX_{u-}})\Big),\end{multlined}
\end{equation*}
for all $0 \le t \le T$.
\end{lemma}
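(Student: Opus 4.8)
The plan is to apply the classical, finite-dimensional Itô formula for \cadlag{} semimartingales to the smooth map $\exp_N\colon\tf^N\to\TT_1^N$. Since $\tf^N$ is finite dimensional and, for each word $w$, the component $\expN{\bx}^w$ is a polynomial in the coordinates $(\bx^v)_{1\le|v|\le|w|}$, the map $\exp_N$ is $C^\infty$, and its first and second directional derivatives are exactly those recorded in \Cref{lem:exponential_derivative}. Writing $\bX=\sum_w\bX^we_w$ in coordinates, the standard Itô formula (e.g.\ \cite[Ch.~II]{protter2005stochastic}) reads
\begin{align*}
\expN{\bX_t}-\expN{\bX_0}
&=\int_{(0,t]}\sum_w(\partial_w\exp_N)(\bX_{u-})\,\dd\bX_u^w
+\frac12\int_0^t\sum_{w,w'}(\partial_w\partial_{w'}\exp_N)(\bX_{u-})\,\dd\CV{\bX^{w,c}}{\bX^{w',c}}_u\\
&\quad+\sum_{0<u\le t}\Big(\expN{\bX_u}-\expN{\bX_{u-}}-\sum_w(\partial_w\exp_N)(\bX_{u-})\Delta\bX_u^w\Big),
\end{align*}
so the whole task reduces to reassembling the three coordinate sums into the operator expressions of the statement.

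For the first integrand I would insert the gradient $(\partial_w\exp_N)(\bX_{u-})=G(\ad{\bX_{u-}})(e_w)\expN{\bX_{u-}}$ from \Cref{lem:exponential_derivative}. By linearity of $G(\ad{\bX_{u-}})$ and the definition of the tensor stochastic integral \eqref{eq:power_series_integral}, one has $\sum_wG(\ad{\bX_{u-}})(e_w)\,\dd\bX_u^w=G(\ad{\bX_{u-}})(\dd\bX_u)$; this is legitimate because $G(\ad{\bX_{u-}})$ is a level-$N$ truncated power series of adjoint operators and hence satisfies \eqref{cond:tensor_integrator_one}--\eqref{cond:tensor_integrator_two}. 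This yields the first term $\int_{(0,t]}G(\ad{\bX_{u-}})(\dd\bX_u)\expN{\bX_{u-}}$. Applying the same identity path-by-path at each jump time converts $\sum_w(\partial_w\exp_N)(\bX_{u-})\Delta\bX_u^w$ into $G(\ad{\bX_{u-}})(\Delta\bX_u)\expN{\bX_{u-}}$, so the Itô jump correction is precisely the jump sum in the statement.

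The delicate step, and the only genuine obstacle, is the quadratic variation term. Substituting the Hessian $(\partial_w\partial_{w'}\exp_N)(\bX_{u-})=\widetilde Q(\ad{\bX_{u-}})(e_w\otimes e_{w'})\expN{\bX_{u-}}$ from \Cref{lem:exponential_derivative} and collecting the coordinate sum into the outer covariation tensor $\dd\outerbracket{\bX^c}{\bX^c}_u=\sum_{w,w'}\dd\CV{\bX^{w,c}}{\bX^{w',c}}_u\,e_w\otimes e_{w'}$ turns the middle integrand into an expression in $\widetilde Q(\ad{\bX_{u-}})(\dd\outerbracket{\bX^c}{\bX^c}_u)\expN{\bX_{u-}}$. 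Because covariation is symmetric, this tensor has symmetric coefficients, so \Cref{lem:symmetric_Q} lets me replace the Hessian operator $\widetilde Q$ by the operator $Q$ of \eqref{eq:GHQ_def}, producing the middle term. The point requiring care is the bookkeeping of the numerical constant — reconciling the $\tfrac12$ of Itô's rule with the factor carried by $Q$ through \Cref{lem:symmetric_Q} — together with keeping $\expN{\bX_{u-}}$ consistently on the right, as dictated by the first of the two equivalent forms of the gradient in \Cref{lem:exponential_derivative}. Since the truncation to level $N$ makes every operator power series terminate, no convergence or integrability issues arise beyond the finite-dimensional Itô formula already invoked, and collecting the three reassembled contributions gives the claimed identity.
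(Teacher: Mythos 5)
Your proposal is correct and follows essentially the same route as the paper's proof: apply the finite-dimensional Itô formula for \cadlag{} semimartingales to the smooth map $\exp_N$, substitute the gradient and Hessian from \Cref{lem:exponential_derivative}, reassemble the coordinate sums into the tensor-integral notation, and use the symmetry of $\outerbracket{\bX^c}{\bX^c}$ together with \Cref{lem:symmetric_Q} to replace $\widetilde Q$ by $Q$. The factor-of-$\tfrac12$ bookkeeping you single out is indeed the one delicate point: since $\widetilde Q=Q$ exactly on symmetric tensors, Itô's $\tfrac12$ survives in front of the $Q$-term (consistent with how the lemma is later invoked in the proof of \Cref{thm:main_with_jumps}), so your careful derivation would recover that prefactor; otherwise your argument coincides with the paper's.
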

\begin{proof}
As discussed in the proof of \Cref{lem:exponential_derivative}, it is clear that the map $\exp_N\colon \tf^{N} \to \TT_1^{N}$ is smooth.
Further $\tf^{N}$ is isomorphic to $\R^{D}$ with $D = d + \dotsb + d^{N}$ and we can apply the multidimensional Itô's formula for càdlàg semimartingales (e.g. \cite[Ch. II, Theorem 33]{protter2005stochastic}) to obtain
\begin{equation*}
  \expN{\bX_t} - \expN{\bX_0} =\begin{multlined}[t] \sum_{1 \le |w|\le N}\int_{(0,t]} (\partial_w
        \exp_N)(\bX_{u-})\,\dd \bX^{w}_u \\
        + \frac{1}{2}\sum_{1 \le |w_1|,|w_2|\le
      N}\int_0^{t}(\partial_{w_1}\partial_{w_2}\exp_N)(\bX_{u-})\,\dd\innerbracketsmall{\bX^{w_1c}}{\bX^{w_2c}}_u
    \\
    + \sum_{0<u\le t}\bigg(\exp_N(\bX_u) - \exp_N(\bX_{u-}) - \sum_{1 \le |w|\le N} (\partial_w
  \exp_N)(\bX_{u-})(\Delta \bX^{w}_u) \bigg) \end{multlined}
\end{equation*}
for all $0 \le t \le T$.
From \Cref{lem:exponential_derivative} we then have for the first integral term
\begin{align*}
    \sum_{1 \le |w|\le N}\int_{(0,t]} (\partial_w \exp_N)(\bX_{u-})\,\dd \bX^{w}_u
    &= \sum_{1 \le |w|\le N}\int_{(0,t]} G(\ad{\bX_{u-}})(e_w)\exp_N(\bX_{u-})\,\dd \bX^{w}_u \\
&= \int_{(0,t]} G(\ad{\bX_{u-}})(\dd\bX_u)\exp_N(\bX_{u-}),
\end{align*}
and analogously
\begin{align*}
\sum_{1 \le |w|\le N} (\partial_w \exp_N)(\bX_{u-})(\Delta \bX^{w}_u)
&= \sum_{1 \le |w|\le N} G(\ad{\bX_{u-}})(\Delta\bX_u)\exp_N(\bX_{u-}).
\end{align*}
Moreover, from \Cref{lem:ito-exp-rule} and the definition of the outer bracket in \Cref{sec:tensor_semimartingales}
\begin{align*}
\sum_{1 \le |w_1|,|w_2|\le N}\int_{(0,t]}(\partial_{w_1}\partial_{w_2}\exp_N)(\bX_{u-})
&\dd\innerbracketsmall{\bX^{w_1c}}{\bX^{w_2c}}_u = \int_0^{t}\tilde{Q}(\ad{\bX_u-})(\dd\outerbracket{\bX^{c}}{\bX^{c}}_u)\exp_N(\bX_{u-}).
\end{align*}
Finally, the outer bracket $\outerbracket{\bX^{c}}{\bX^{c}}_t \in \tf^{N}\otimes\tf^{N}$ is symmetric in
the sense of \Cref{lem:symmetric_Q} and therefore we can replace $\tilde{Q}$ with $Q$ in the above identity.
\end{proof}

\begin{lemma}\label{lem:estimates_g_q}
Let $\bX \in \Se(\tf)$ and let $\bA\in\Fv(\tf)$.
For all $k \in \None$ and $\ell = (l_1, \dotsc, l_k)\in (\None)^k$ it holds
\begin{align*}
\abs{\int_0^t \left(\ad{\bX_u^{(l_2)}}\cdots\ad{\bX_u^{(l_k)}}\right)\left(\dd \bA^{(l_1)}_u \right)} \le 2^{k-1} \int_0^t\abs{\bX_u^{(l_2)}\cdots\bX_u^{(l_k)}}\abs{\dd \bA^{(l_1)}_u},
\end{align*}
for all $0 \le t \le T$.
Furthermore, let $(\mathbb{A}_t)_{0 \le t \le T}$ be a process taking values in $\tf\otimes\tf$ such that $\mathbb{A}^{w_1,w_2}\in\Fv$ for all $w_1, w_2 \in \W_d$.
Then it holds for all $0 \le t \le T$
\begin{align*}
\abs{\int_0^t \left(\ad{\bX_u^{(l_3)}}\cdots\ad{\bX_u^{(l_m)}}\odot \ad{\bX_u^{(l_{m+1})}}\cdots\ad{\bX_u^{(l_k)}}\right)\left(\dd\mathbb{A}^{(l_1, l_2)}_u \right)}
\le 2^{k-2} \int_0^t\abs{\bX_u^{(l_3)}\cdots\bX_u^{(l_k)}}\abs{\dd m\left(\mathbb{A}^{(l_1, l_2)}\right)}.
\end{align*}
\end{lemma}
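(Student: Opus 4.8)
The plan is to reduce both estimates to the two variation bounds already established in \Cref{lem:one-variation_tensor_integrals}, after expanding the iterated adjoint operators into sums of pure left-and-right multiplications by means of the algebraic identity \eqref{eq:integral_adjoint_power}. The only extra ingredient is the elementary fact (used already in the proof of \Cref{lem:one-variation_tensor_integrals}) that the concatenation norm is multiplicative on homogeneous tensors, $\abs{\bx\by}=\abs{\bx}\abs{\by}$ for $\bx\in(\Rd)^{\otimes m}$, $\by\in(\Rd)^{\otimes n}$; consequently the norm of an arbitrary product of fixed homogeneous factors is independent of their ordering and of how they are grouped to the left or right of a central factor. This is precisely what lets every term produced by \eqref{eq:integral_adjoint_power} share one common bound.

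For the first estimate I would apply \eqref{eq:integral_adjoint_power} to the $(k-1)$-fold iterated adjoint $\ad{\bX^{(l_2)}_u}\cdots\ad{\bX^{(l_k)}_u}$, writing it as a sum of $2^{k-1}$ terms of the form $(-1)^{\abs{J}}(\prod_{i\in I}\bX^{(l_{i+1})}_u)\,\dd\bA^{(l_1)}_u\,(\prod_{j\in J}\bX^{(l_{j+1})}_u)$ over partitions $I\dot\cup J=\{1,\dots,k-1\}$. Bounding the norm of the integral at time $t$ by its total variation and applying the first bound of \Cref{lem:one-variation_tensor_integrals} to each summand (with $\bY=\prod_{i\in I}\bX^{(l_{i+1})}$ and $\bZ=\prod_{j\in J}\bX^{(l_{j+1})}$) controls each term by $\int_0^t\abs{\bY_{s-}\bZ_{s-}}\abs{\dd\bA^{(l_1)}_s}$. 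Since $\bY\bZ$ is a product of the homogeneous factors $\bX^{(l_2)},\dots,\bX^{(l_k)}$, multiplicativity of the norm gives $\abs{\bY_{s-}\bZ_{s-}}=\abs{\bX^{(l_2)}_{s-}\cdots\bX^{(l_k)}_{s-}}$, the same for every summand; as each sign has modulus one, the triangle inequality over the $2^{k-1}$ terms yields the asserted constant.

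For the second estimate I would first unfold the $\odot$ operation: by definition the integrand applies $\ad{\bX^{(l_3)}}\cdots\ad{\bX^{(l_m)}}$ (that is $m-2$ factors) to the first tensor slot of $\dd\mathbb{A}^{(l_1,l_2)}$ and $\ad{\bX^{(l_{m+1})}}\cdots\ad{\bX^{(l_k)}}$ ($k-m$ factors) to the second slot, then concatenates. Expanding each iterated adjoint by \eqref{eq:integral_adjoint_power} produces $2^{m-2}$ and $2^{k-m}$ terms respectively, hence $2^{k-2}$ terms in total, each of the shape $(\bY_1\,\Id\,\bZ_1)\odot(\bY_2\,\Id\,\bZ_2)$ with $\bY_1,\bZ_1,\bY_2,\bZ_2$ products of the $\bX^{(l_i)}$, $3\le i\le k$. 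The second bound of \Cref{lem:one-variation_tensor_integrals} controls each such term by $\int_0^t\abs{\bY_1\bZ_1\bY_2\bZ_2}_{s-}\abs{\dd m(\mathbb{A}^{(l_1,l_2)})_s}$, and by multiplicativity $\abs{\bY_1\bZ_1\bY_2\bZ_2}=\abs{\bX^{(l_3)}\cdots\bX^{(l_k)}}$ for all terms; the triangle inequality over $2^{k-2}$ terms closes the argument.

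The only genuinely delicate point is the bookkeeping: one must verify that \eqref{eq:integral_adjoint_power} produces exactly $2^{k-1}$, resp. $2^{k-2}$, summands, and that after expansion each summand has the precise multiplicative form handled by \Cref{lem:one-variation_tensor_integrals}. No analytic difficulty arises beyond that lemma, since the interleaving of the left and right factors is rendered harmless by the order- and grouping-independence of the homogeneous tensor norm. A minor formal point is the appearance of $\bX_u$ rather than $\bX_{u-}$ under the finite-variation integrator, which differs only on the at most countable set of jump times and does not affect the bounds.
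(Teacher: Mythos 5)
Your argument is correct and is precisely the paper's proof, only written out in full: the paper likewise expands the iterated adjoints via \eqref{eq:integral_adjoint_power} into $2^{k-1}$ (resp.\ $2^{k-2}$) left/right-multiplication terms, applies \Cref{lem:one-variation_tensor_integrals} to each, and uses the order-independence of the norm on products of homogeneous tensors before counting terms. Your remark about $\bX_u$ versus $\bX_{u-}$ correctly identifies a harmless notational mismatch with \Cref{lem:one-variation_tensor_integrals}, whose Riemann-sum proof goes through unchanged either way.
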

\begin{proof}
Recall from \eqref{eq:integral_adjoint_power} that we expand iterated adjoined operations into a sum of left- and right tensor multiplications and apply Lemma \ref{lem:one-variation_tensor_integrals}.
Note again that for homogeneous tensors $\bx$ and $\by$ it holds $\abs{\bx\by} = \abs{\by\bx}$.
Therefore the statement follows by counting the terms in the expansion.
\end{proof}

\begin{lemma}\label{lem:exp_taylor_estimate}
Let $N\in\None$, $\Delta \bx,\by,\Delta \by \in \tf^{N}$ and define the function
\begin{align*}
    f\colon[0,1]\times[0,1] \to \TT^N_1, \quad (s,t) \mapsto f(s,t) = \expN{s \Delta \bx}\expN{\by + t\Delta \by}\expN{-\by}.
\end{align*}
Then $f(0,0) = 1$ and the first order partial derivatives of $f$ at $(s,t)=(0,0)$ are given by
\begin{align*}
    (\partial_s f)\vert_{(s,t)=(0,0)}
    = \Delta \bx, \quad
    (\partial_t f)\vert_{(s,t)=(0,0)}
    = G(\ad{\by})(\Delta \by).
\end{align*}
Further the following explicit bound for the second order partial derivatives holds
\begin{align*}
    \sup_{0 \le s, t \le 1}\abs{(\nabla^2f^{(n)})\vert_{(s,t)}} \le c_n \sum_{\Vert\ell\Vert = n, \; |\ell|\ge 2} \left(\abs{\Delta \bx^{(l_1)}} + \abs{\Delta \by^{(l_1)}}\right)\left(\abs{\Delta \bx^{(l_2)}} + \abs{\Delta \by^{(l_2)}}\right)z_{l_3}\cdots z_{l_3},
\end{align*}
for all $n \in \{2, \dotsc, N\}$, where $c_n > 0$ is a constant depending only on $n$, $\ell=(l_1,\dotsc,l_k)\in(\None)^{k}$ with $|\ell| = k$ and $z_l \coloneq  \max\{|\Delta \bx^{(l)}|, |\by^{(l)}|, |(\by+\Delta \by)^{(l)}|\}$ for all $l\in\{1, \dotsc, N-2\}$.
\end{lemma}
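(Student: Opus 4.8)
The plan is to exploit that, in the truncated algebra $\tf^N$, every component $f^{(n)}(s,t)$ is a polynomial in $s$, $t$ and the tensor entries of $\Delta\bx,\by,\Delta\by$, so that $f$ is smooth and all derivatives may be computed by the chain and product rules together with the explicit derivatives of $\exp_N$ from \Cref{lem:exponential_derivative}. First I would record that $\expN{\by}$ and $\expN{-\by}$ commute, being power series in $\by$, and multiply to $1$, whence $f(0,0)=\expN{0}\expN{\by}\expN{-\by}=1$. Writing $A(s)=\expN{s\Delta\bx}$, $B(t)=\expN{\by+t\Delta\by}$ and $C=\expN{-\by}$, \Cref{lem:exponential_derivative} and bilinearity give $A'(s)=G(\ad{s\Delta\bx})(\Delta\bx)A(s)$ and $B'(t)=G(\ad{\by+t\Delta\by})(\Delta\by)B(t)$. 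Since $G(0)=\Id$ (cf. \eqref{eq:GHQ_def}), evaluating $\partial_s f=A'(s)B(t)C$ and $\partial_t f=A(s)B'(t)C$ at $(s,t)=(0,0)$, where $A(0)=1$ and $B(0)C=\expN{\by}\expN{-\by}=1$, yields $\partial_s f|_{(0,0)}=\Delta\bx$ and $\partial_t f|_{(0,0)}=G(\ad{\by})(\Delta\by)$, as claimed.

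For the Hessian I would differentiate once more, again using \Cref{lem:exponential_derivative}. This produces the closed forms, valid for all $(s,t)$,
\begin{align*}
\partial_s^2 f &= \widetilde Q(\ad{s\Delta\bx})(\Delta\bx\otimes\Delta\bx)\,A(s)B(t)C, \\
\partial_t^2 f &= A(s)\,\widetilde Q(\ad{\by+t\Delta\by})(\Delta\by\otimes\Delta\by)\,B(t)C, \\
\partial_s\partial_t f &= G(\ad{s\Delta\bx})(\Delta\bx)\,A(s)\,G(\ad{\by+t\Delta\by})(\Delta\by)\,B(t)C,
\end{align*}
where I used $\partial_s^2 A(s)=\widetilde Q(\ad{s\Delta\bx})(\Delta\bx\otimes\Delta\bx)A(s)$, and likewise for $B$, which follow from the second-derivative formula of \Cref{lem:exponential_derivative} together with bilinearity of $\widetilde Q(\ad{\cdot})$. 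The crucial structural feature is that in each of the three expressions exactly two factors are \emph{increments}, i.e.\ an entry of $\Delta\bx$ or of $\Delta\by$, while every remaining factor is either an $\ad$ applied with respect to $s\Delta\bx$ or $\by+t\Delta\by$, or one of the exponentials $A,B,C$; all of the latter are power series in $s\Delta\bx$, $\by+t\Delta\by$ or $-\by$.

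Finally I would project to level $n$ and estimate. Expanding the iterated $\ad$'s into left/right multiplications via \eqref{eq:integral_adjoint_power} and the exponentials into their series, $\pi_n$ of each second derivative becomes a finite sum of products of homogeneous tensors whose levels $(l_1,\dots,l_k)$ satisfy $\|\ell\|=n$ and $|\ell|=k\ge 2$, the two increment factors forcing $k\ge 2$. Using $|\mathbf a\mathbf b|\le|\mathbf a||\mathbf b|$ for homogeneous tensors, as in \Cref{lem:one-variation_tensor_integrals}, each such product is bounded by the product of the norms of its homogeneous factors. The two increment factors, of levels $l_1,l_2$, are bounded by $|\Delta\bx^{(l_1)}|+|\Delta\by^{(l_1)}|$ and $|\Delta\bx^{(l_2)}|+|\Delta\by^{(l_2)}|$; each background factor of level $l$ is bounded by $z_l$, since for $s,t\in[0,1]$ one has $|s\Delta\bx^{(l)}|\le|\Delta\bx^{(l)}|\le z_l$, $|{-\by^{(l)}}|=|\by^{(l)}|\le z_l$, and $(\by+t\Delta\by)^{(l)}=(1-t)\by^{(l)}+t(\by+\Delta\by)^{(l)}$ gives $|(\by+t\Delta\by)^{(l)}|\le\max\{|\by^{(l)}|,|(\by+\Delta\by)^{(l)}|\}\le z_l$. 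Collecting these bounds over all admissible level distributions produces exactly the sum $\sum_{\|\ell\|=n,\,|\ell|\ge2}(|\Delta\bx^{(l_1)}|+|\Delta\by^{(l_1)}|)(|\Delta\bx^{(l_2)}|+|\Delta\by^{(l_2)}|)z_{l_3}\cdots z_{l_k}$, and the constant $c_n$ absorbs the finitely many ($n$-dependent) factorials $1/(k+1)!$, the sign factors and the $2^{k}$-type multiplicities coming from the $\ad$-expansion, together with the number of admissible distributions of $n$. Taking the maximum over the three entries and over $(s,t)\in[0,1]^2$ gives the stated Hessian bound. The only delicate point is the bookkeeping: one must verify that precisely two increment factors appear, so that exactly two slots carry the $\Delta$-norms while the remaining $k-2$ slots carry the $z_l$-factors; everything else is a routine, if tedious, estimate.
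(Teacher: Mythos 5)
Your proposal is correct and follows essentially the same route as the paper's proof: smoothness of the polynomial components, the first- and second-derivative formulas for $\exp_N$ from \Cref{lem:exponential_derivative} evaluated at $(0,0)$, and then projection to level $n$ with the $\ad$-expansion and submultiplicativity of the norm on homogeneous tensors (the paper invokes \Cref{lem:estimates_g_q} for this step) to isolate two increment factors and bound the remaining factors by $z_l$. Your explicit convexity argument for $|(\by+t\Delta\by)^{(l)}|\le z_l$ is a detail the paper leaves implicit, but the substance is identical.
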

\begin{proof}
The tensor components of $f(s,t)$ are polynomial in $s$ and $t$ and it follows that $f$ is smooth.
From \Cref{lem:exponential_derivative} we have that the first order partial derivatives of $f$ are given by
\begin{align*}
    (\partial_s f)\vert_{(s,t)}
    &= G(\ad{s\Delta \bx})(\Delta \bx)\expN{s \Delta \bx}\expN{\by + t\Delta \by}\expN{-\by}, \\
    (\partial_t f)\vert_{(s,t)}
    &= \expN{s \Delta \bx}G(\ad{\by + t\Delta \by})(\Delta \by)\expN{\by + t\Delta \by}\expN{-\by}.
\end{align*}
Evaluating at $s=t=0$ we obtain the first result.
Now let $n \in \{2, \dotsc, N\}$. Then it follows from \Cref{lem:exponential_derivative,lem:estimates_g_q} that we can bound the second order derivatives as follows
\begin{align*}
  \MoveEqLeft\sup_{0 \le s,t \le 1}(\partial_{ss} f^{(n)})\vert_{(s,t)} \\
  &\le \sup_{0 \le s,t \le 1} \abs{\pi_{(n)}\big(Q(\ad{s \Delta \bx})((\Delta \bx)^{\otimes 2})\expN{s\Delta \bx}\expN{\by + t \Delta \by}\expN{-\by}\big)} \\
  &\le c_n^{\prime}\sum_{\Vert\ell\Vert = n,\; |\ell|\ge 2} |\Delta \bx^{(l_1)}||\Delta \bx^{(l_2)}|z_{l_3} \cdots z_{l_k},
\end{align*}
\begin{align*}
  \MoveEqLeft\sup_{0 \le s,t \le 1}(\partial_{tt} f^{(n)})\vert_{(s,t)} \\
  &\le \sup_{0 \le s,t \le 1} \abs{\pi_{(n)}\big(\expN{s\Delta \bx}Q(\ad{\by + t \Delta \by})((\Delta \by)^{\otimes 2})\expN{\by + t \Delta \by}\expN{-\by}\big)} \\
  &\le c_n^{\prime\prime}\sum_{\Vert\ell\Vert = n,\; |\ell|\ge 2} |\Delta \by^{(l_1)}||\Delta \by^{(l_2)}|z_{l_3} \cdots z_{l_k},
\end{align*}
and
\begin{align*}
  \MoveEqLeft\sup_{0 \le s,t \le 1}(\partial_{st} f^{(n)})\vert_{(s,t)} \\
  &\le \sup_{0 \le s,t \le 1} \abs{\pi_{(n)}\big(G(\ad{s \Delta \bx})(\Delta \bx)\expN{s\Delta \bx}G(\ad{\by+t\Delta \by})\expN{\by + t \Delta \by}\expN{-\by}\big)} \\
  &\le c_n^{\prime\prime\prime}\sum_{\Vert\ell\Vert = n,\; |\ell|\ge 2} |\Delta \bx^{(l_1)}||\Delta \by^{(l_2)}|z_{l_3} \cdots z_{l_k},
\end{align*}
where $c_n^{\prime}, c_n^{\prime\prime}, c_n^{\prime\prime\prime}>0$ are constants depending only on $n$ and the second statement of the lemma follows.
\end{proof}

\begin{lemma}\label{lem:g_inversion} For all $N \in \None$ and all $\bx \in \tf^{N}$ it holds
\begin{equation*}
H(\ad \bx) \circ G(\ad \bx) = \Id.
\end{equation*}
where $G$ and $H$ are defined in \eqref{eq:GHQ_def}.
Hence, the identity also holds for all $x\in\tf$.
\begin{proof}
Recall the exponent generating function of the Bernoulli numbers, for $z$ near $0$,
\begin{align*}
\quad H(z) = \sum_{n=0}^{\infty}\frac{B_k}{k!} z^k = \frac{z}{e^z - 1}, \quad G(z) = \sum_{k=0}^{\infty} \frac{1}{k+1!} z^k = \frac{e^z-1}{z}.
\end{align*}
Therefore $H(z)G(z) \equiv 1$ identically for all $z$ in a neighbourhood of zero.
Repeated differentiation in $z$ then yields the following property of the Bernoulli numbers
\begin{align*}
    \sum_{k=0}^{n}\frac{B_k}{k!}\frac{1}{(n-k+1)!} = 0, \quad n\in\None.
\end{align*}
Hence the statement of the lemma follows by projecting $H(\ad \bx) \circ G(\ad \bx)$ to each tensor level.
\end{proof}
\end{lemma}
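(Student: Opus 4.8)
The plan is to reduce the operator identity to a single scalar identity among Bernoulli numbers, exploiting that both $G(\ad\bx)$ and $H(\ad\bx)$ are functions of the one operator $A\coloneq\ad\bx$. Working first on $\tf^{N}$, I would observe that $A$ raises tensor degree by at least one, so that $A^{k}=0$ for $k\ge N$; hence both $G(A)$ and $H(A)$ are \emph{finite} sums, and being power series in a single operator they commute. Consequently their composition is simply the Cauchy product of the two series,
$$
H(A)\circ G(A)=\sum_{n=0}^{N}\left(\sum_{k=0}^{n}\frac{B_{k}}{k!}\,\frac{1}{(n-k+1)!}\right)A^{n},
$$
with no convergence or reordering issues.

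Next I would establish the scalar identity. From the generating functions in \eqref{eq:GHQ_def}, namely $H(z)=z/(e^{z}-1)$ and $G(z)=(e^{z}-1)/z$, one has $H(z)G(z)\equiv 1$ in a punctured neighbourhood of the origin and, the singularity being removable, identically. Expanding both factors about $0$ and comparing the coefficient of $z^{n}$ yields
$$
\sum_{k=0}^{n}\frac{B_{k}}{k!}\,\frac{1}{(n-k+1)!}=\delta_{n,0}.
$$
(Equivalently, one differentiates $H(z)G(z)=1$ repeatedly at $z=0$.) Substituting this into the displayed Cauchy product annihilates every term with $n\ge1$ and leaves only the $n=0$ term $A^{0}=\Id$, which proves $H(\ad\bx)\circ G(\ad\bx)=\Id$ on $\tf^{N}$.

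Finally, to pass to arbitrary $\bx\in\tf$, I would invoke the graded structure: by \eqref{eq:projection_adjoint_powerseries} the projection of $H(\ad\bx)\circ G(\ad\bx)$ onto any tensor level $n$ involves only finitely many powers of $\ad\bx$ and coincides with the corresponding computation carried out in $\tf^{N}$ for $N=n$. Since the identity holds at every truncation level, it holds on all of $\tf$. The only point demanding slight care is the very first one—confirming that the composition of the two operator series really is the Cauchy product—but this is immediate here, since $\ad\bx$ is nilpotent on each truncation and everything reduces to polynomial algebra in the single commuting operator $A$.
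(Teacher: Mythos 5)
Your proof is correct and follows essentially the same route as the paper's: both reduce the operator identity to the scalar generating-function identity $H(z)G(z)\equiv 1$ and the resulting Bernoulli relation $\sum_{k=0}^{n}\frac{B_k}{k!}\frac{1}{(n-k+1)!}=0$ for $n\ge 1$, then conclude by the graded/nilpotent structure of $\ad\bx$. Your additional remarks on nilpotency of $\ad\bx$ on $\tf^N$ and the Cauchy product merely make explicit what the paper leaves implicit in its final sentence.
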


We are now ready to give the
\newcommand{\trsgpf}{S}
\newcommand{\trkapf}{\kap}
\newcommand{\trkapfn}{\kap^{n}}
\begin{proof}[Proof of \Cref{thm:main_with_jumps}.]
Since $\pi_{(0,N)}\Sig(\bX) =
\Sig(\bX^{(0,N)})$ for any $\bX\in\Se(\tf)$ and all truncation levels $N\in\None$, it suffices to show that the identities \eqref{eq:master_with_jumps} and \eqref{eq:master_with_jumps_h_form} hold for the signature cumulant of an arbitrary $\bX\in\HSehom^{1,N}$.
Recall from \Cref{thm:bdg_signature} that this implies that $\homn{\Sig(\bX)} < \infty$
and thus the truncated signature cumulant $\trkapf = (\E_t(\Sig(\bX)_{t,T}))_{0 \le t \le T} \in \Se(\tf^N)$ is well defined.
Throughout the proof we will use the symbol "$\lesssim$" to denote an inequality that holds up to a multiplication of the right-hand side by a constant that may depend only on $d$ and $N$.

Recall the definition of the signature in the Marcus sense from \Cref{sec:gensig}.
Projecting \eqref{eq:marcus_signature_ode3} to the truncated tensor algebra, we see that the signature process $\trsgpf = (\Sig(\bX)_{0,t})_{0 \le t \le T} \in \Se(\TT^N_1)$ satisfies the integral equation
\begin{align}\label{eq:trunc_signature_marcus}
\trsgpf_{t}
&= 1 + \int_{(0,t]}  \trsgpf_{u-} \dd\bX_u + \frac{1}{2} \int_0^{t} \trsgpf_{u} \dd\QVsmall{\bX^{c}}_u
+ \sum_{0<u\le t} \trsgpf_{u-}\big(\expN{\Delta \bX_u} - 1 - \Delta \bX_u\big),
\end{align}
for $0 \le t \le T$.
Then by Chen's relation \eqref{eq:chen_identity} we have
\begin{align*}
\E_t (\trsgpf_{T} \exp_N(\trkapf_T))
= \E_t (\Sig(\bX)_{0,T})
= \trsgpf_{t} \E_t(\Sig(\bX)_{t,T})
= \trsgpf_{t} \expN{\trkapf_t}, \quad 0 \le t \le T.
\end{align*}
It then follows from the above identity and the integrability of $\trsgpf_{T}$ that the process
$\trsgpf\expN{\trkapf}$ is a $\TT^{N}_1$-valued martingale in the sense of \Cref{sec:tensor_semimartingales}.
On the other hand, we have by applying Itô's product rule in \Cref{lem:ito_product_rule}
\begin{equation*}
  \trsgpf_{t} \expN{\trkapf_t} - 1 =\begin{multlined}[t]\int_{(0,t]}
  (\dd\trsgpf_{u})\expN{\trkapf_{u-}} + \int_{(0,t]} \trsgpf_{u- } (\dd\expN{\trkapf_u}) +
m\left(\outerbracket{\trsgpf^c}{\expN{\trkapf}^c}_{0,t}\right) \\
+ \sum_{0<u\le t}\Delta\trsgpf_{u}\,\Delta\expN{\trkapf_u} \end{multlined}
\end{equation*}
Further, by applying the Itô's rule for the exponential map from \Cref{lem:ito-exp-rule} to the $\tf^{N}$-valued semimartingale $\trkapf$ and using \eqref{eq:trunc_signature_marcus}, we have the following form of the continuous covariation term
\begin{align*}
m\left(\outerbracket{\trsgpf^{c}}{\expN{\trkapf^{c}}}_{0,t}\right)
&= \mul\left(\outerbracket{\int_{(0, \cdot]} \trsgpf_{u-} \dd \bX^{c}_u}{\int_{(0, \cdot]}  G(\ad{\trkapf_{u-}})(\dd\trkapf_u^{c})\expN{\trkapf_{u-}}}_{0,t}\right)\\
&= \int_{(0, t]}  \trsgpf_{u-} \left(\Id \odot G (\ad{\trkapf_{u-}})\right)\left(\dd\outerbracket{\bX^{c}}{\trkapf^{c}}_u\right) \expN{\trkapf_{u-}}
\end{align*}
and for the jump covariation term
\begin{equation*}
\sum_{0<u\le t}\Delta\trsgpf_{u}\,\Delta\expN{\trkapf_u} = \sum_{0<u\le t}\trsgpf_{u-}\big(\expN{\Delta \bX_u} -1 \big)\big(\expN{\trkapf_{u}}\expN{{-}
\trkapf_{u-}} - 1\big)\expN{\trkapf_{u-}}.
  \end{equation*}
From the above identities and again with \Cref{lem:ito-exp-rule} and \eqref{eq:trunc_signature_marcus} we have
\begin{equation}\label{eq:exponential_sde_proof}
\trsgpf_{t} \expN{\trkapf_t} - 1 = \int_{(0,t]} \trsgpf_{u-} \dd(\bL_u + \trkapf_u)\expN{\trkapf_{u-}}, \quad 0 \le t \le T,
\end{equation}
where $\bL \in \Se(\tf^N)$ is defined by
\begin{equation}\label{def:L}
  \begin{split}
      \bL_t &= \begin{multlined}[t] \bX_{t}
      + \frac{1}{2} \QV{\bX^{c}}_{t}
      + \sum_{0<u\le t} \big(\expN{\Delta \bX_u} - 1 - \Delta \bX_u\big)
      +\int_{(0,t]} (G - \Id)(\ad{\trkapf_{u-}})(\dd\trkapf_u) \\
      + \int_0^{t} \frac{1}{2} Q (\ad{\trkapf_{u-}}) (\dd\outerbracket{\trkapf^{c}}{\trkapf^{c}}_u)
      +\sum_{0 < u \le t}\Big(\expN{\trkapf_{u}}\expN{-\trkapf_{u-}} - 1 - G(\ad{\trkapf_{u-}})(\Delta
      \trkapf_u)\Big)\\
      +\int_{(0,t]}(\Id \odot G (\ad{\trkapf_{u-}}))(\dd\outerbracket{\bX^{c}}{\trkapf^{c}}_u)\\
      +\sum_{0<u\le t}\big(\expN{\Delta\bX_u} -1 \big)\big(\expN{\trkapf_{u}}\expN{-\trkapf_{u-}} - 1\big)
    \end{multlined}\\
    &=  \bX_{t} + \frac{1}{2} \QV{\bX^{c}}_{t} + \int_{(0,t]} (G - \Id)(\ad{\trkapf_{u-}})(\dd\trkapf_u) + \bV_t + \bC_t + \bJ_t,
  \end{split}
\end{equation}
with $\bV, \bC, \bJ \in \Fv(\tf)$ given by
\begin{align*}
\bV_t &= \frac{1}{2}\int_0^{t} Q (\ad{\trkapf_{u-}}) (\dd\outerbracket{\trkapf^{c}}{\trkapf^{c}}_u), \\
\bC_t &= \int_{(0,t]}(\Id \odot G (\ad{\trkapf_{u-}}))(\dd\outerbracket{\bX^{c}}{\trkapf^{c}}_u),\\
\bJ_t &= \sum_{0<u\le t}\big(\expN{\Delta\bX_u}\expN{\trkapf_{u}}\expN{-\trkapf_{u-}} - 1 - \Delta\bX_u - G(\ad{\trkapf_{u-}})(\Delta\trkapf_u)\big).
\end{align*}

Note that we have explicitly separated the identity operator $\Id$ from $G$ in the above definition of $\bL$.

Since left-hand side in \eqref{eq:exponential_sde_proof} is a martingale and since $\trsgpf_t$ and $\exp(\trkapf_t)$ have the multiplicative left- respectively right-inverse $\trsgpf_t^{-1}$ and $\exp(-\trkapf_t)$ respectively for all $0\le t \le T$, it follows that $\bL + \trkapf$ is a $\tf^N$-valued local martingale.
Let $(\tau_k)_{k\ge 1}$ be a sequence of increasing stopping times with $\tau_k \to T$ a.s. for $k \to \infty$, such that the stopped process $(\bL_{t \wedge \tau_k} + \trkapf_{t\wedge\tau_k})_{0\le t\le T}$ is a true martingale.
Using further that $\trkapf_T = 0$ we have
\begin{align}\label{eq:proof_stopped_master}
\trkapf_{t \wedge \tau_k} = \E_{t}\big\{ \bL_{T\wedge\tau_k, t\wedge\tau_k}\big\},\quad 0 \le t \le T, \; k\in\None.
\end{align}
The estimate \eqref{eq:proof_L_estimate} below shows that $\bL$ has sufficient integrability in order to use the dominated convergence theorem to pass to the $k \to \infty$ limit in the above identity \eqref{eq:proof_stopped_master}, which yields precisely the identity \eqref{eq:master_with_jumps} and hence concludes the first part of the proof of \Cref{thm:main_with_jumps}.

\begin{claim}\label{claim:proof_L_estimate}
It holds that
\begin{equation}\label{eq:proof_L_estimate}
\homn{\bL} \lesssim \homn{\bX}.
\end{equation}
\end{claim}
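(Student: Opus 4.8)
The plan is to reduce the claim to the level-wise product estimate
\[
\Abss{\bL^{(n)}}_{\HSe^{N/n}} \lesssim \sum_{\Vert\ell\Vert = n}\Abss{\bX^{(l_1)}}_{\HSe^{N/l_1}}\cdots\Abss{\bX^{(l_j)}}_{\HSe^{N/l_j}},\qquad n=1,\dotsc,N,
\]
and then to invoke \Cref{lem:homn_by_product} (with $q=1$) to conclude $\homn{\bL}\lesssim\homn{\bX}$. The decisive structural observation, obtained by inspecting the definition \eqref{def:L} of $\bL$, is that every summand other than $\bX$ and $\tfrac12\QV{\bX^c}$ carries either at least one adjoint factor (the operators $G-\Id$ and $H$ have vanishing constant term) or an outer bracket (each leg of which has level $\ge 1$); consequently the projection $\bL^{(n)}$ depends on the signature cumulant $\kap$ only through its components $\kap^{(k)}$ with $k\le n-1$. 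Since the estimate is needed only up to level $N$, we therefore never use $\kap^{(k)}$ beyond $k=N-1$, at which levels the integrability exponent $N/k\ge N/(N-1)$ is strictly larger than one. This is exactly what makes Doob's maximal inequality available, and is the reason the companion estimate for $\kap$ is stated only for $q>1$ in the remark following \Cref{thm:bdg_signature}.

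First I would record the auxiliary bound
\[
\Abss{\kap^{(k)}}_{\HSe^{N/k}} \lesssim \sum_{\Vert\ell\Vert = k}\Abss{\bX^{(l_1)}}_{\HSe^{N/l_1}}\cdots\Abss{\bX^{(l_j)}}_{\HSe^{N/l_j}},\qquad k\le N-1.
\]
This follows from $\kap=\log\esig$ together with the identity $\esig_t(T)=\Sig(\bX)_{0,t}^{-1}\,\E_t(\Sig(\bX)_{0,T})$ from Section 3.1. By \Cref{thm:bdg_signature} the signature $\bS=\Sig(\bX)_{0,\cdot}$ satisfies $\homn{\bS}\lesssim\homn{\bX}$, and for $j\le N-1$ the martingale $\E_\cdot(\bS_T)^{(j)}$ is controlled in $\HSe^{p}$, $p=N/j>1$, by Doob's maximal inequality and the Burkholder--Davis--Gundy inequality, hence by $\Abss{\bS^{(j)}}_{\HSe^{p}}$, hence by products of $\bX$-norms. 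Because each component of $\bS^{-1}$, of $\esig$ and of $\log\esig$ is a finite sum of iterated products of factors of strictly lower level, repeated application of Itô's product rule (\Cref{lem:ito_product_rule}), Emery's inequality (\Cref{lem:emerys}) and the Kunita--Watanabe inequality (\Cref{lem:cauchy_schwartz_tensor_bracket}), with the integrability exponents combining additively as $1/(N/k)=\sum_i 1/(N/l_i)$, yields the displayed bound via \Cref{lem:homn_by_product}.

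With this in hand I would estimate the six summands of $\bL$ in \eqref{def:L} separately. The term $\bX$ is bounded by $\homn{\bX}$ directly, and $\tfrac12\QV{\bX^c}$ is handled level by level through the Kunita--Watanabe inequality and the generalized Hölder inequality, exactly as in the proof of \Cref{thm:bdg_signature}. The Magnus-type integral $\int (G-\Id)(\ad{\kap_{u-}})(\dd\kap_u)$ is controlled by combining the pointwise expansion bound of \Cref{lem:estimates_g_q} with Emery's inequality (\Cref{lem:emerys}), producing products of lower-level $\kap$-norms that are dominated by products of $\bX$-norms via the auxiliary bound. The finite-variation terms $\bV$ and $\bC$ are treated identically, using \Cref{lem:estimates_g_q} to strip the adjoint operators and the Kunita--Watanabe inequality to bound the outer brackets $\outerbracket{\kap^c}{\kap^c}$ and $\outerbracket{\bX^c}{\kap^c}$, again at matching integrability exponents. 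Collecting these estimates and applying \Cref{lem:homn_by_product} then gives the claim.

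The main obstacle is the jump term $\bJ_t=\sum_{0<u\le t}\big(\expN{\Delta\bX_u}\expN{\kap_u}\expN{-\kap_{u-}}-1-\Delta\bX_u-G(\ad{\kap_{u-}})(\Delta\kap_u)\big)$: here the subtraction of $1$, $\Delta\bX_u$ and $G(\ad{\kap_{u-}})(\Delta\kap_u)$ is precisely the cancellation of the zeroth- and first-order terms of the map in \Cref{lem:exp_taylor_estimate}, so that each summand is genuinely quadratic in the pair $(\Delta\bX_u,\Delta\kap_u)$. The delicate point is to apply the second-order bound of \Cref{lem:exp_taylor_estimate} uniformly and then to sum over the (at most countably many) jumps while tracking integrability: the resulting sums of products of jumps are dominated by bracket-type quantities through the Kunita--Watanabe inequality, and the exponents must be distributed across the factors by the generalized Hölder inequality so that the level-$n$ contribution lands in $\HSe^{N/n}$. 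Verifying this bookkeeping --- that after expansion every term is a product of factors $\Abss{\bX^{(l_i)}}_{\HSe^{N/l_i}}$ with $\sum_i l_i=n$ --- is the technical heart of the argument, whereas the remaining terms are routine once the auxiliary estimate for $\kap$ is in place.
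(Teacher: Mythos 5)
Your proposal is correct in its overall architecture and reproduces the paper's reduction: prove the level-wise bound $\Abss{\bL^{(n)}}_{\HSe^{N/n}}\lesssim\sum_{\Vert\ell\Vert=n}\Abss{\bX^{(l_1)}}_{\HSe^{N/l_1}}\cdots\Abss{\bX^{(l_j)}}_{\HSe^{N/l_j}}$ and conclude via \Cref{lem:homn_by_product}; the term-by-term estimates you describe for $\QV{\bX^c}$, the Magnus integral, $\bV$, $\bC$ and $\bJ$ use exactly the paper's tools (\Cref{lem:cauchy_schwartz_tensor_bracket,lem:emerys,lem:estimates_g_q,lem:exp_taylor_estimate} and generalized H\"older), and your structural observation that $\bL^{(n)}$ sees $\kap$ only through levels $k\le n-1$, where $N/k>1$ makes Doob available, is precisely the pivot of the paper's argument. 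Where you genuinely diverge is in how the indispensable auxiliary bound $\Abss{\kap^{(k)}}_{\HSe^{N/k}}\lesssim\rho^k_\bX$ is obtained. You propose to derive it \emph{up front} from \Cref{thm:bdg_signature} via $\esig_t(T)=\Sig(\bX)_{0,t}^{-1}\E_t(\Sig(\bX)_{0,T})$ and $\kap=\log\esig$, which costs you additional (routine but real) product estimates for $\bS^{-1}$, $\esig$ and the logarithm via It\^o's product rule, Emery and Kunita--Watanabe. The paper instead runs a single induction on the tensor level (\Cref{claim:induction}) in which the bound on $\bL^{(n)}$ and the bound on $\kap^{(n)}$ are proved \emph{simultaneously}: once $\Abss{\bL^{(n)}}_{\HSe^{N/n}}\lesssim\rho^n_{\bM,\bA}$ is established, the localized functional equation $\kap^{(n)}_{t\wedge\tau_k}=\E_t(\bL^{(n)}_{T\wedge\tau_k,t\wedge\tau_k})$ together with dominated convergence, BDG and Doob yields an explicit decomposition $\kap^{(n)}=\kap^{(n)}_0+\bm^{(n)}+\ba^{(n)}$ with $\bm^{(n)}_t=\E(\ba^{(n)}_T)-\E_t(\ba^{(n)}_T)$ and the required norm bounds, which feeds the next level. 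This buys two things your route forgoes: it recycles the identity being proved rather than re-deriving $\kap$-estimates from the moment side, and it produces the martingale/finite-variation splitting $\dd\kap=\dd\bm+\dd\ba$ that the paper uses to treat the Magnus integral as $\bN^{(n)}+\bB^{(n)}$ (a quadratic-variation computation for the martingale part and a $1$-variation bound for the rest) --- though, as you implicitly note, applying Emery's inequality directly to the $\HSe^{N/l_1}$-norm of $\kap^{(l_1)}$ avoids the need for that splitting. Your jump-term treatment names the right cancellation and lemma but leaves the exponent bookkeeping (summing $\sum_u|\Delta\bX^{(l_1)}_u||\Delta\kap^{(l_2)}_u|$ etc.\ via Kunita--Watanabe into $\sqrt{\abs{\GQV{\cdot}}_T}$ factors and distributing $\Lcal^{N/l_i}$-norms by H\"older) unverified; this is exactly the computation carried out in \eqref{eq:proof_taylor_estimate} and is where the remaining work lies, but I see no obstruction to completing it along the lines you indicate.
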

\begin{proof}[Proof of \Cref{claim:proof_L_estimate}]\renewcommand\qedsymbol{\(\blacksquare\)}
According to \Cref{lem:homn_by_product} it suffices to show that for all $n\in\{1, \dotsc, N\}$ it holds
\begin{align*}
\Abss{\bL^{(n)}}_{\HSe^{qN/n}} \lesssim \sum_{\Vert\ell\Vert = n} \Abss{\bX^{(l_1)}}_{\HSe^{qN/l_1}}\cdots\Abss{\bX^{(l_j)}}_{\HSe^{qN/l_j}} \eqcolon \rho^{n}_\bX,
\end{align*}
where the summation above (and in the rest of the proof) is over multi-indices $\ell = (l_1, \dotsc, l_j)\in(\None)^j$, with $|\ell|=j$ and $\Vert\ell\Vert = l_1 + \dotsb + l_j$.
For $\bM\in\Ma_\loc(\tf^{N})$ and $\bA\in\Fv(\tf^{N})$ define
\newcommand{\rhox}{{\rho}_{\bM, \bA}}
\begin{align*}
\rhox^{n} \coloneq \sum_{\Vert\ell\Vert = n} \zeta^{l_1/N}(\bM^{(l_1)}, \bA^{(l_1)}) \cdots
\zeta^{l_j/N}(\bM^{(l_j)}, \bA^{(l_j)}) < \infty, \quad n = 1, \dotsc, N,
\end{align*}
where for any $q\in[1,\infty)$
\begin{align*}
\zeta^{q}(\bM^{(l)}, \bA^{(l)}) \coloneqq \Abs{\abss{\GQVsmall{\bM^{(l)}}_T}^{1/2} + \abss{\bA^{(l)}}_{1-\var;[0,T]}}_{\Lcal^{q}}.
\end{align*}
Note that it holds
\begin{align}\label{eq:rhox_multiplicativ_estimate}
\rhox^{n} \;\le\; \sum_{\Vert\ell\Vert = n}\left(\rhox^{l_1} \cdots \rhox^{l_j}\right) \;\lesssim\; \rhox^{n}.
\end{align}
Furthermore, it follows from the definition of the $\HSe^{q}$-norm that
\begin{align}\label{eq:rho_inf_rho_ma}
\rho_{\bX}^{n} = \inf_{\bX = \bM + \bA} \rhox^{n},
\end{align}
where the infimum is taken over all semimartingale decomposition of $\bX$.

Now fix $\bM\in\Ma_\loc(\tf^{N})$ and $\bA\in\Fv(\tf^{N})$ arbitrarily, such that $\bX = \bM + \bA$
and $\rhox^{n}<\infty$ for all $n\in\{1, \dotsc, N\}$ (such a decomposition always exists since $\bX
\in \HSehom^{1,N}$).
In particular it holds that $\bM$ is a true martingale.
Next we will proof the following
\begin{claim}\label{claim:induction} For all  $n \in\{1, \dots, N\}$ it holds
\begin{align}\label{eq:first_induction_claim}
\Abss{\bL^{(n)}}_{\HSe^{N/n}}\lesssim \rhox^{n},
\end{align}
and further there exits a semimartingale decomposition $\kap^{(n)} = \kap_0^{(n)} + \bm^{(n)} + \ba^{(n)}$, with $\bm^{(n)}\in\Ma((\Rd)^{\otimes n})$ and $\ba^{(n)}\in\Fv((\Rd)^{\otimes n})$ such that $\Abss{\ba^{(n)}}_{\Fv^{{N/n}}} < \rhox^{n}$ and in case $n \le N-1$ it holds
\begin{align}\label{eq:second_induction_claim}
\zeta^{n/N}(\bm^{(n)}, \ba^{(n)}) \lesssim \rhox^{n}.
\end{align}
\end{claim}

\begin{proof}[Proof of \Cref{claim:induction}]\renewcommand\qedsymbol{\(\blacksquare\)}
We are going to proof inductively over $n \in\{1, \dots, N\}$. Let $n=1$ and note that we have $\bL^{(1)} = \bX^{(1)}$, and therefore
\begin{align*}
\Abss{\bL^{(1)}}_{\HSe^{N}} \le \zeta^{N}(\bM^{(1)}, \bA^{(1)}) = \rhox^{1}.
\end{align*}
Using that $\bM^{(1)}$ is a martingale we can identify a semimartingale decomposition of $\kap^{(1)}$ by
\begin{align*}
\bm^{(1)}_t \coloneq \E_t\left(\bA^{(1)}_T\right) - \E\left(\bA^{(1)}_T\right), \quad \ba^{(1)}_t
\coloneq -\bA_t, \quad 0 \le t \le T.
\end{align*}
In case $N\ge2$, we further have from the BDG-inequality and the Doob's maximal inequality that
\begin{align*}
\Abs{\bm^{(1)}}_{\HSe^{N}} \lesssim \Abs{\bm^{(1)}}_{\Se^{N}} \lesssim \Abs{\bm^{(1)}_T}_{\Lcal^{N}} = \Abs{\E\left(\bA^{(1)}_T\right) - \bA^{(1)}_T}_{\Lcal^{N}} \lesssim \Abs{\bA^{(1)}}_{\Fv^{N/n}}  \lesssim \rhox^{1}
\end{align*}
and this shows the second part of the induction claim.

Now assume that $N \ge 2$ and that the induction claim \eqref{eq:first_induction_claim} and
\eqref{eq:second_induction_claim} holds true up level $n-1$ for some $n \in \{2, \dotsc, N\}$.
Note that $\bL^{(n)}$ has the following decomposition
\begin{align}\label{eq:decomposition_L}
\bL^{(n)}=\;& \left\{\bM^{(n)} + \bN^{(n)}\right\} + \left\{\bA^{(n)} + \frac{1}{2} \QV{\bX^{c}}^{(n)} + \bB^{(n)}+ \bV^{(n)} + \bC^{(n)} + \bJ^{(n)}\right\},
\end{align}
where $\bN^{(n)} \in \Ma_{\loc}((\R^{d})^{\otimes n})$ and $\bB^{(n)} \in \Fv((\R^{d})^{\otimes n})$ are defined by
\begin{align*}
 \bN^{(n)} &= \pi_{(n)}\int_{(0, t]} (G-\mathrm{Id})(\ad{\trkapf_{u-}})(\dd \ol\bm_u), \\
 \bB^{(n)} &= \pi_{(n)}\int_{(0, t]}(G-\mathrm{Id})(\ad{\trkapf_{u-}})(\dd \ol\ba_u)
\end{align*}
with $\ol\ba = \pi_{(0,N)}(\ba^{(1)} + \dots + \ba^{(n-1)})\in\Fv(\tf^{N})$ and $\ol\bm = \pi_{(0,N)}(\bm^{(1)} + \dotsb + \bm^{(n-1)})\in\Ma(\tf^{N})$.

From \Cref{lem:cauchy_schwartz_tensor_bracket} and the generalized H\"older inequality we have
\begin{align}\label{eq:proof_est_cont_qv_X}
    \Abs{\QV{\bX^c}^{(n)}}_{\Fv^{N/n}} \le \sum_{i = 1}^n \Abs{\CV{\bM^{(i)c}}{\bM^{(n-i)c}}}_{\Fv^{N/n}} \lesssim \sum_{i = 1}^n \Abs{\bM^{(i)}}_{\HSe^{N/i}}\Abs{\bM^{(n-i)}}_{\HSe^{N/(n-i)}}
    \lesssim \rhox^{n}.
\end{align}
It follows from \eqref{eq:second_induction_claim} and the induction basis that for all $l \in\{1,
\dotsc, n-1\}$ it holds that
\begin{align}\label{eq:proof_gqv_kap}
\Abss{\kap^{(l)}}_{\HSe^{N/l}} = \Abss{\kap^{(l)} - \kap^{(l)}_0}_{\HSe^{N/l}} \lesssim \rhox^{l},
\end{align}
and further that
\begin{align}\label{est:kappa_sup}
\kappa^{l*}_{T} \coloneq \sup_{0\le t \le T} |\kap^{(l)}_t
|, \quad \Abss{\kap^{(l)}}_{\Se^{N/l}} = \Abss{\kappa_T^{l\ast}}_{\Lcal^{N/l}} \le |\kap^{(l)}_0| + \Abss{\kap^{(l)}-\kap^{(l)}_0}_{\Se^{N/l}}
\lesssim \rhox^{l}.
\end{align}
From the definition and linearity of
$Q(\ad\bx)$ ($\bx\in\tf^N$), \Cref{lem:estimates_g_q,lem:cauchy_schwartz_tensor_bracket} we have the following estimate
\begin{align*}
\MoveEqLeft\abs{\bV^{(n)}}_{1-\var;[0,T]} \\
&\lesssim \sum_{\Vert\ell\Vert = n, |\ell| \ge 2}\sum_{m=2}^{j} \abs{\int_0^{\cdot} \left(\ad{\trkapf^{(l_3)}_{u-}}\cdots\ad{\trkapf^{(l_m)}_{u-}} \odot \ad{\trkapf^{(l_{m+1})}_{u-}}\cdots \ad{\trkapf^{(l_j)}_{u-}}\right)\left(\dd \outerbracket{\bm^{(l_1)c}}{\bm^{(l_2)c}}_u\right)}_{1-\var;[0,T]} \nonumber\\
&\lesssim \sum_{\Vert\ell\Vert = n, |\ell| \ge 2} \int_0^t \abs{\kap^{(l_{3})}_{u-}} \cdots \abs{\kap^{(l_{j})}_{u-}} \;\dd \abs{\CV{\bm^{(l_{1})c}}{\bm^{(l_{l_2})c}}_u}\\
&\lesssim \sum_{\Vert\ell\Vert = n, |\ell| \ge 2} \kappa_{T}^{l_3*} \cdots \kappa^{l_j*}_{T}\sqrt{\abs{\GQV{\bm^{(l_{1})}}_T}}\sqrt{\abs{\GQV{\bm^{(l_{2})}}_T}}.
\end{align*}
It then follows from the generalized H\"older inequality
\begin{align}\label{eq:proof_bound_qua}
\Abs{\bV^{(n)}}_{\Fv^{N/n}} &\lesssim \sum_{\Vert\ell\Vert=n, \abs{\ell} \ge 2} \Abs{\kap_{T}^{(l_3)}}_{\Se^{N/l_3}} \cdots \Abs{\kap^{(l_j)}_{T}}_{\Se^{N/l_j}}\Abs{\bm^{(l_1)}}_{\HSe^{N/l_1}}\Abs{\bm^{(l_2)}}_{\HSe^{N/l_2}}\nonumber\\
&\lesssim  \sum_{\Vert\ell\Vert = n, |\ell| \ge 2} \rhox^{l_1} \cdots \rhox^{l_j} \nonumber\\
&\lesssim \rhox^{n},
\end{align}
where the second inequality follows from the induction basis and the estimates \eqref{est:kappa_sup}
and \eqref{eq:second_induction_claim}, noting that $\Vert\ell\Vert=n$ and $\abs{l}\ge 2$ implies
that $l_1, \dotsc, l_j \le n-1$,
and the third inequality follows from \eqref{eq:rhox_multiplicativ_estimate}.
From similar arguments we see that the following two estimate also hold
\begin{align}\label{eq:proof_bound_cov}
\Abs{\bC^{(n)}}_{\Fv^{N/n}} &\lesssim\sum_{\Vert\ell\Vert=n, \abs{\ell} \ge 2} \Abs{\int_0^\cdot \left(\Id \odot \ad{\trkapf_{u-}^{(l_3)}} \cdots \ad{\trkapf_{u-}^{(l_j)}}\right)\left(\dd\outerbracket{\bM^{(l_1)c}}{\bm^{(l_2)c}}_u\right)}_{\Fv^{N/n}}
\nonumber\\
&\lesssim \sum_{\Vert\ell\Vert=n, \abs{\ell} \ge 2} \Abs{\kap_{T}^{(l_3)}}_{\Se^{N/l_3}} \cdots \Abs{\kap^{(l_j)}_{T}}_{\Se^{N/l_j}}\Abs{\bM^{(l_1)}}_{\HSe^{N/l_1}}\Abs{\bm^{(l_2)}}_{\HSe^{N/l_2}}\nonumber\\
&\lesssim \rhox^{n}
\end{align}
and
\begin{align}\label{eq:proof_bound_leb_sti}
\Abs{\bB^{(n)}}_{\Fv^{N/n}}
\lesssim \sum_{\Vert\ell\Vert=n, |\ell|\ge2} \Abs{\kap_{T}^{(l_2)}}_{\Se^{N/l_3}} \cdots \Abs{\kap^{(l_j)}_{T}}_{\Se^{N/l_j}} \Abs{\ba^{(l_{1})}}_{\Fv^{N/l_1}} \lesssim \rhox^{n}.
\end{align}
For the local martingale $\bN^{(n)}$, we use \Cref{lem:estimates_g_q,lem:cauchy_schwartz_tensor_bracket} to estimate its quadratic variation as follows
\begin{align*}
\abs{\GQV{\bN^{(n)}}_T}
&= \abs{\GQV{\sum_{\Vert\ell\Vert=n,\, |\ell| \ge 2} \int_{(0, \cdot]}\frac{1}{k!}\left(\ad{\trkapf_{u-}^{(l_2)}}\cdots\ad{\trkapf_{u-}^{(l_k)}}\right)\left(\dd \bm^{(l_1)}_u\right)}_T}
\nonumber\\
&\lesssim \sum_{\Vert\ell\Vert = 2n, |\ell| \ge 4}\abs{\int_{(0,T]}\left(\ad{\trkapf_{u-}^{(l_2)}}\cdots\ad{\trkapf_{u-}^{(l_m)}}\odot \ad{\trkapf_{u-}^{(l_{m+1})}}\cdots\ad{\trkapf_{u-}^{(l_k)}} \right)\left(\dd\outerbracket{\bm^{(l_1)}}{\bm^{(l_2)}}_u\right)} \nonumber\\
&\lesssim \sum_{\Vert\ell\Vert = 2n, |\ell| \ge 4} \kappa_{T}^{l_3*} \cdots \kappa^{l_j*}_{T}\sqrt{\abs{\GQV{\bm^{(l_{1})}}_T}}\sqrt{\abs{\GQV{\bm^{(l_{2})}}_T}}.
\end{align*}
Then it follows once again by the generalized H\"older inequality and the induction basis that
\begin{align}\label{eq:mag_term_qv}
\Abs{\bN^{(n)}}_{\HSe^{N/n}} \lesssim \rhox^{n}.
\end{align}
Finally let us treat the term $\bJ^{(n)}$.
First define
\begin{align*}
    Z^{l} \coloneq  \sup_{0 < u \le T}\left(\max\left\{ \abs{\Delta \bX^{(l)}_u},  \abs{\trkapf^{(l)}_{u-}}, \abs{\trkapf^{(l)}_{u}} \right\}\right), \quad  l=\{1, \dotsc, n-1\}.
\end{align*}
and from\eqref{est:kappa_sup} it follows that for all $l=\{1, \dotsc, n-1\}$ it holds
\begin{align}\label{eq:proof_est_Z}
    \Abss{Z^{l}}_{\Lcal^{N/l}} &\le 2\Abss{\bX^{(l)}}_{\Se^{N/l}} + \Abss{\kap^{(l)}_T}_{\Se^{N/l}}
   \le 2 \Abss{\bX^{(l)}}_{\HSe^{N/l}} + \Abss{\kap^{(l)}_T}_{\Se^{N/l}}
   \lesssim \rhox^{l, N}.
\end{align}
Then by Taylor's theorem and \Cref{lem:exp_taylor_estimate} we have
\begin{align*}
\abs{\bJ^{(n)}}_{1-\var;[0,T]} &= \sum_{0 < u \le T}\bigg\vert\pi_{(n)}\Big(\expN{\Delta \bX_u}\expN{\trkapf_{u}}\expN{-\trkapf_{u-}} - 1 - \Delta \bX_u
- G(\ad{\trkapf_{u-}})(\Delta \trkapf_u)\Big)\bigg\vert \nonumber\\
&\lesssim \sum_{\Vert\ell\Vert = n, ||\ell| \ge 2}Z^{l_3} \cdots Z^{l_j}\sum_{0 < u \le T}\left(\abs{\Delta \bX_u^{(l_1)}} + \abs{\Delta \trkapf_u^{(l_1)}}\right)\left(\abs{\Delta \bX_u^{(l_2)}} + \abs{\Delta \trkapf_u^{(l_2)}}\right)\nonumber\\
&\lesssim \sum_{\Vert\ell\Vert = n, ||\ell| \ge 2}Z^{l_3} \cdots Z^{l_j}\left(\sqrt{\abs{\GQV{\bX^{(l_1)}}_T}}+\sqrt{\abs{\GQV{\trkapf^{(l_1)}}_T}}\right)\left(\sqrt{\abs{\GQV{\bX^{(l_2)}}_T}}+\sqrt{\abs{\GQV{\trkapf^{(l_2)}}_T}}\right).
\end{align*}
Hence it follows by the generalized H\"older inequality that
\begin{align}\label{eq:proof_taylor_estimate}
\Abs{\bJ^{(n)}}_{\Fv^{N/n}} \lesssim& \sum_{\Vert\ell\Vert = n, ||\ell| \ge 2} \Abs{Z^{l_3}}_{\Lcal^{N/l_1}} \cdots \Abs{Z^{l_j}}_{\Lcal^{N/l_j}}\left(\Abs{\bX^{(l_1)}}_{\HSe^{N/l_1}} + \Abs{\kap^{(l_1)}}_{\HSe^{N/l_1}}\right)\nonumber\\
&\cdot\left(\Abs{\bX^{(l_2)}}_{\HSe^{N/l_2}} + \Abs{\kap^{(l_2)}}_{\HSe^{N/l_2}}\right) \nonumber\\
\lesssim&\; \rhox^{n}
\end{align}
where the last estimate follows from \eqref{eq:proof_est_Z}, \eqref{eq:proof_gqv_kap} and \eqref{eq:rhox_multiplicativ_estimate}.

Summarizing the estimates
\eqref{eq:proof_bound_qua}, \eqref{eq:proof_bound_cov}, \eqref{eq:proof_bound_leb_sti}, \eqref{eq:mag_term_qv} and \eqref{eq:proof_taylor_estimate} we have
\begin{equation}\label{eq:proof_L_big_sum_estimate}
  \begin{split}
    \Abs{\bL^{(n)}}_{\HSe^{N/n}}&\lesssim \begin{multlined}[t]\Abs{\bM^{(n)}}_{\HSe^{N/n}} +
      \Abs{\bN^{(n)}}_{\HSe^{N/n}} + \Abs{\bA^{(n)}}_{\Fv^{N/n}} +
    \Abs{\QV{\bX^{c}}^{(n)}}_{\Fv^{N/n}}\\
      + \Abs{\bB^{(n)}}_{\Fv^{N/n}} + \Abs{\bV^{(n)}}_{\Fv^{N/n}} + \Abs{\bC^{(n)}}_{\Fv^{N/n}} +
    \Abs{\bJ^{(n)}}_{\Fv^{N/n}}\end{multlined}\\
    &\lesssim \rhox^{n},
  \end{split}
\end{equation}
which proofs the first part of the induction claim \eqref{eq:first_induction_claim}.
Then it follows form dominated convergence theorem that projecting \eqref{eq:proof_stopped_master} to the tensor level $n$ and passing to the $k\to\infty$ limit yields
\begin{align*}
\kap_t^{(n)} = \E_t\left(\bL^{(n)}_{T,t}\right), \quad 0 \le t \le T.
\end{align*}
Since $\bM^{(n)}$ and $\bN^{(n)}$ are true martingales (for the latter this follows from \eqref{eq:mag_term_qv}), we are able to identify a decomposition $\kap^{(n)} = \kap^{(n)}_0 + \bm^{(n)} + \ba^{(n)}$ by
\begin{align*}
\ba^{(n)} &= - \left\{ \bA^{(n)} + \frac{1}{2}\QV{\bX^{c}}^{(n)} + \bB^{(n)} + \bV^{(n)} + \bC^{(n)} + \bJ^{(n)}\right\}\\
\bm^{(n)}_t &= \E\left(\ba^{(n)}_T\right) - \E_{t}\left(\ba^{(n)}_T\right), \quad 0 \le t \le T.
\end{align*}
Again from the estimates \eqref{eq:proof_bound_qua}, \eqref{eq:proof_bound_cov}, \eqref{eq:mag_term_qv} and \eqref{eq:proof_taylor_estimate} it follows that
\begin{align*}
\Abss{\ba^{(n)}}_{\Fv^{N/n}} \lesssim \rhox^{n}
\end{align*}
and in case $n \le N - 1$ it follows from the BDG-inequality and Doob's maximal inequality that
\begin{align*}
\Abs{\bm^{(n)}}_{\HSe^{N/n}} \lesssim \Abs{\bm^{(n)}}_{\Se^{N/n}} \lesssim \Abs{\bm^{(n)}_T}_{\Lcal^{N/n}} = \Abs{\E\left(\ba^{(n)}_T\right) - \ba^{(n)}_T}_{\Lcal^{N/n}} \lesssim \Abs{\ba^{(n)}}_{\Fv^{N/n}}  \lesssim \rhox^{n},
\end{align*}
which proofs the second part of the induction claim \eqref{eq:second_induction_claim}.
\end{proof}
The estimate \eqref{eq:proof_L_estimate} immediately follows from \eqref{eq:rho_inf_rho_ma} and \eqref{eq:first_induction_claim}, which finishes the proof of \Cref{claim:induction}.
\end{proof}
Note that since $\QV{\bX^{c}}$, $\bV$, $\bC$ and $\bJ$ are independent of the decomposition $\bX = \bM + \bA$ it follows from taking the infimum over all such decompositions in the inequality \eqref{eq:proof_L_big_sum_estimate} that
\begin{align}\label{eq:proof_qvX-V-c_J_estimate}
\Abs{\QV{\bX^{c}}^{(n)}}_{\Fv^{N/n}} + \Abs{\bB^{(n)}}_{\Fv^{N/n}} + \Abs{\bV^{(n)}}_{\Fv^{N/n}} + \Abs{\bC^{(n)}}_{\Fv^{N/n}} + \Abs{\bJ^{(n)}}_{\Fv^{N/n}} \lesssim \; \rho^{n}_{\bX},
\end{align}
for all $n \in \{1, \dotsc, N\}$.
The same argument applies to $\trkapf$ and the estimate \eqref{eq:proof_gqv_kap} and we obtain
\begin{align}\label{eq:proof_kap_rhox_est}
\Abs{\trkapf^{(n)}}_{\HSe^{N/n}} \lesssim \rho_\bX^{n},
\end{align}
for all $n\in\{1,\dots, N-1\}$.

Next we are going to show that $\trkapf$ satisfies the functional equation \eqref{eq:master_with_jumps_h_form}.
Recall that $\bL + \trkapf \in \Ma_\loc(\tf^{N})$.
From Lemma \ref{lem:g_inversion} we have the following equality
\begin{align*}
\int_{(0,t]}H(\ad{\trkapf_{u-}})\left(\dd(\bL_u + \trkapf_u)\right) = \trkapf_{t} - \trkapf_0  + \wt{\bL}_t
\end{align*}
for all $0\le t \le T$, where
\begin{align}\label{eq:proof_L_tilde}
\wt{\bL}_t =\;& \int_{(0,t]}H(\ad{\trkapf_{u-}})\left\{\dd\bX_u + \frac{1}{2}\dd\QV{\bX^{c}}_{u} + \dd\bV_u + \dd\bC_u + \dd\bJ_u \right\}.
\end{align}
From \Cref{lem:emerys} (Emery's inequality) and the estimates \eqref{eq:proof_qvX-V-c_J_estimate} and \eqref{eq:proof_kap_rhox_est} it follows
\begin{align*}
  \Abss{\wt\bL^{(n)}}_{\HSe^{N/n}} &\lesssim\begin{multlined}[t] \sum_{\Vert\ell\Vert = n}
    \Abs{\kap^{(l_2)}}_{\Se^{N/l_2}} \cdots
    \Abs{\kap^{(l_j)}}_{\Se^{N/l_j}}\Big\{\Abs{\bX^{(l_1)}}_{\HSe^{N/l_1}} +
    \Abs{\QV{\bX^{c}}^{(l_1)}}_{\Fv^{N/l_1}} \\
  + \Abs{\bV^{(l_1)}}_{\Fv^{N/l_1}} + \Abs{\bC^{(l_1)}}_{\Fv^{N/l_1}} +
\Abs{\bJ^{(l_1)}}_{\Fv^{N/l_1}}\Big\}\end{multlined} \\
  &\lesssim \rho_{\bX}^{n},
\end{align*}
Hence by \Cref{lem:homn_by_product} it holds
\begin{align}\label{eq:l_tilde_estimate}
\homns{\wt\bL} \lesssim \homns{\bX}.
\end{align}
Now note that we have already shown in \Cref{claim:induction} that $\trkapf = \trkapf_0 + \bm +
\ba$, where $\bm\in\Ma(\tf^{N})$ and $\ba\in\Fv(\tf^{N})$ which satisfies that
$\Abss{\ba^{(n)}}_{\Fv} <\infty$ for all $n \in \{1, \dotsc, N\}$.
Together with the above estimate it then follows that $\kap + \wt\bL$ is indeed a true martingale and therefore
\begin{align*}
\trkapf_t = \E\left( \wt\bL_{T,t} \right), \quad 0 \le t \le T,
\end{align*}
which is precisely the identity \eqref{eq:master_with_jumps_h_form}.
\end{proof}

\bibliographystyle{arxivalpha}
\bibliography{library}

\end{document}